\documentclass[12pt,a4paper]{amsart}    


\usepackage{amsmath}
\usepackage{amsfonts}
\usepackage{amssymb}
\usepackage{amsthm}
\usepackage{mathrsfs}
\usepackage{bm}
\usepackage{graphicx}
\usepackage{color}
\usepackage{marginnote}
\usepackage{mathtools}
\usepackage{hyperref}
\usepackage{xcolor}
\usepackage{tikz}


\mathtoolsset{showonlyrefs}
\allowdisplaybreaks[1]
\setlength{\oddsidemargin}{0cm}
\setlength{\evensidemargin}{0cm}
\setlength{\topmargin}{0cm}
\setlength{\textheight}{23.7cm}
\setlength{\textwidth}{15.9cm}


\newtheorem{theorem}{Theorem}
\newtheorem{proposition}[theorem]{Proposition}
\newtheorem{lemma}[theorem]{Lemma}
\newtheorem{corollary}[theorem]{Corollary}

\theoremstyle{definition}

\theoremstyle{remark}
\newtheorem{remark}[theorem]{Remark}

\numberwithin{equation}{section}
\numberwithin{theorem}{section}


\newcommand{\re}{\mathbb R}
\newcommand{\R}{\mathbb R}
\newcommand{\Nt}{\mathbb N}
\newcommand{\Lpu}{L_{\rm{uloc}}^{p}}

\newcommand{\OPL}{{\Del - A\Del K \! \ast}}         
\newcommand{\OPs}{{ A\Del K \! \ast}}         
\newcommand{\N}{\nabla}
\newcommand{\pt}{\partial}   

\newcommand{\lam}{\lambda}
\newcommand{\ep}{\varepsilon}

\newcommand{\Del}{\Delta}

\newcommand{\dx}{\,{\rm d}x}
\newcommand{\dy}{\,{\rm d}y}

\newcommand{\dta}{\,{\rm d}\tau}
\newcommand{\ds}{\,{\rm d}s}
\renewcommand{\d}[1]{{\rm d}#1}


\newcommand{\rdd}{\color{blue}}

\newcommand{\eq}[1]{\begin{equation}#1\end{equation}}%
\newcommand{\spl}[1]{{\begin{split}#1\end{split}}}
\newcommand{\eqn}[1]{\begin{equation*}#1\end{equation*}}
\newcommand{\eqsp}[1]{\begin{equation}\begin{split}#1\end{split}\end{equation}}
\newcommand{\eqspn}[1]{\begin{equation*}\begin{split}#1	\end{split}\end{equation*}}%

\newcommand{\uloc}[1]{#1,{\rm uloc}}
\newcommand{\git}{global-in-time }

\newcommand\blfootnote[1]{\begingroup\renewcommand\thefootnote{}\footnote{#1}\addtocounter{footnote}{-1}\endgroup}

\definecolor{lime}{HTML}{A6CE39}
\DeclareRobustCommand{\orcidicon}{%
	\begin{tikzpicture}
		\draw[lime, fill=lime] (0,0) 
		circle [radius=0.16] 
		node[white] {{\fontfamily{qag}\selectfont \tiny ID}};
		\draw[white, fill=white] (-0.0625,0.095) 
		circle [radius=0.007];
	\end{tikzpicture}
	\hspace{-2mm}
}

\begin{document}

\title{Stability of constant steady states of a chemotaxis model  
}


%
%

\author[S. Cygan]{Szymon Cygan \href{https://orcid.org/0000-0002-8601-829X}{\orcidicon}}
\address[S. Cygan]{	Instytut Matematyczny, Uniwersytet Wroc\l{}awski, pl. Grunwaldzki 2/4, \hbox{50-384} Wroc\l{}aw, Poland \\ \href{https://orcid.org/0000-0002-8601-829X}{orcid.org/0000-0002-8601-829X}}
\email[S. Cygan]{szymon.cygan2@uwr.edu.pl}

\author[G. Karch]{Grzegorz Karch \href{https://orcid.org/0000-0001-9390-5578}{\orcidicon}}
\address[G. Karch]{	Instytut Matematyczny, Uniwersytet Wroc\l{}awski, pl. Grunwaldzki 2/4, \hbox{50-384} Wroc\l{}aw, Poland \\ 
\href{https://orcid.org/0000-0001-9390-5578}{orcid.org/0000-0001-9390-5578}}
\email[G. Karch]{grzegorz.karch@uwr.edu.pl}

\author[K. Krawczyk]{Krzysztof Krawczyk \href{https://orcid.org/0000-0003-1365-038X}{\orcidicon}}
\address[K. Krawczyk]{	Instytut Matematyczny, Uniwersytet Wroc\l{}awski, pl. Grunwaldzki 2/4, \hbox{50-384} Wroc\l{}aw, Poland \\ 
\href{https://orcid.org/0000-0003-1365-038X}{orcid.org/0000-0003-1365-038X}}
\email[S. Cygan]{krzysztof.krawczyk@uwr.edu.pl}

\author[H. Wakui]{Hiroshi Wakui \href{https://orcid.org/0000-0002-4676-4669}{\orcidicon}}
\address[H. Wakui]{Faculty of Science Division I, Tokyo University of Science, 1-3 Kagurazaka, Shinjuku-ku, Tokyo \hbox{162-8601}, Japan \\
\href{https://orcid.org/0000-0002-4676-4669}{orcid.org/0000-0002-4676-4669}}
\email[H. Wakui]{hiroshi.wakui@rs.tus.ac.jp}

\date{\today}

\begin{abstract}
	The Cauchy problem for the parabolic--elliptic Keller--Segel system in the whole $n$-dimensional space is studied. For this model, every constant $A \in \mathbb{R}$ is a stationary solution. The main goal of this work is to show that $A < 1$ is a stable steady state  while $A > 1$ is unstable.  Uniformly local Lebesgue spaces are used in order to deal with solutions that do not decay at spatial variable on the unbounded domain. 
\end{abstract}

\keywords{parabolic--elliptic Keller--Segel system \and constant steady states \and stability of solutions}

\subjclass[2010]{35B35 \and 35B40 \and 35K15 \and 35K55 \and 35K92 \and 92C17}

\maketitle

\blfootnote{This paper was invented and written online during the 2020 lockdown.}


\section{Introduction}
\label{intro}

There are several mathematical works on
the chemotaxis model   introduced by Keller
and Segel~\cite{KeSi}.  Here, 
we refer the reader only to the monograph \cite{yagi} and the reviews~\cite{BBTW,H} for a discussion of those mathematical results as well as for additional references.
In this paper, 
we consider the following minimal parabolic-elliptic Keller-Segel system
\eq{
	u_t - \Delta u + \nabla \cdot (u \nabla \psi) = 0, \quad 
	- \Delta \psi + \psi = u \quad \text{for} \quad  t>0,\quad x \in \re^{n},
	\label{eq:IntDD}
}
where $u=u(t,x)$ denotes the density of cells and $\psi=\psi(t,x)$ is a~concentration of chemoattractant. In these equations, all constant  parameters are equal to one for simplicity of the exposition.
System \eqref{eq:IntDD} was already studied  in the whole space 
{\it e.g.} in the papers~\cite{BCKZ,BGK,CPZ,KSS,KS11,KSJ,KSY,RCD}, where several results either on a~blow up or on a large time behavior of solutions  have been obtained.

For each constant $A \in \re$, the couple $(u, \psi) = (A,A)$ is a stationary solution of  system~\eqref{eq:IntDD} and, since the domain is unbounded, it does not belong to any Lebesgue $L^p$-space with $p\in [1,\infty)$.
Thus, in Theorem \ref{thm;main} and in Section~\ref{sec:LiTS}, we develop a mathematical theory concerning  local-in-time solutions to the initial value problem for system~\eqref{eq:IntDD} in the uniformly local Lebesgue spaces~$\Lpu(\re^{n})$.
Then, we consider a constant  stationary solution $(u, \psi) = (A,A)$
with $A\in [0,1)$
and we show in Theorem~\ref{thm;global-exist} that  a small $L^p$-perturbation of such an initial datum gives a global-in-time solution which converges toward $(A,A)$ as $t\to\infty$. On the other hand, we prove in the Theorem~\ref{thm;instability} that the constant solution is unstable in the Lyapunov sense if~$A>1$.

A stability of constant solutions of chemotaxis models has been already studied in bounded domains. For example, the paper \cite{GuoHw} describes dynamics near an unstable constant solution to the classical parabolic-parabolic Keller-Segel model in a bounded domain and obtained results are interpreted as an early pattern formation. Another work \cite{Wink} is devoted to the system
\eqn{
	u_t - \Delta u + \nabla \cdot (u \nabla \psi) = 0, \quad 
	- \Delta \psi + \mu = u, \quad 
	\mu \equiv \frac{1}{|B_{R}(0)|} \int_{B_{R}(0)} u \, {\rm d}x, 
}
in the ball of radius $R>0$ with Neumann boundary condition. Here, constants are also stationary solutions and it is shown 
in the work \cite{Wink}
that there exists a critical number~$m_c$ such that at mass levels above $m_c$ the constant steady states are extremely unstable and blow-up can occur. On the other hand, for $m < m_c$ there exist infinitely many radial solutions with a~mass equal to $m$.

\subsection*{Notation}
The usual norm of the Lebesgue space 
$L^p (\re^n)$ 
with respect to the spatial variable is denoted by $
\|\cdot\|_p$ 
for all 
$p \in [1,\infty]$.
In the following, we use also the uniformly local Lebesgue space $\Lpu(\re^n)$ with the norm $\| \cdot \|_{\uloc{p}}$ defined below by formula  \eqref{eq;lpulocnorm}.
Any other norm in a Banach space $Y$ is denoted by $\|\cdot\|_Y$. 
The letter $C$ corresponds to a generic constant (always independent of $t$ and $x$) which may vary from line to line. 
We write
$C=C(\alpha,\beta,\gamma, ...)$ when we want to emphasize the dependence of $C$ on parameters~$\alpha,\beta,\gamma, ...$~. 
We use standard definition of Fourier transform 
$\widehat{f}(\xi) = (2\pi)^{-n/2}\int_{\re^n} e^{- i \xi \cdot x} f(x) \, \d{x}$.

\section{Results and comments}
\label{sec:RnC}
Our goal is to study properties of solutions to the Cauchy problem for the simplified parabolic--elliptic Keller--Segel model of chemotaxis 
\begin{equation} \label{eq;DD1}
	\left\{
	\spl{
		& u_t - \Del u + \N \cdot (u \N\psi) = 0,&&\quad t > 0,\ \  x \in \re^{n},\\
		&-\Del \psi + \psi = u,&&\quad t>0,\ \ x \in \re^{n},\\
		&u(0,x)=u_{0}(x),&& \quad x \in \re^{n},
	}
	\right.
\end{equation}
with  $n \ge 1$. We solve the second equation with respect to $\psi$ to obtain $\psi = K *u$, where $K$~is the Bessel function (see Lemma \ref{lem:KProp} below) which reduces 
problem~\eqref{eq;DD1}  to the following one
\begin{equation}\label{eq;DD} 
	\left\{
	\spl{
		&u_t - \Delta u + \nabla \cdot (u \nabla K* u) = 0,&&\quad t > 0,\ \  x \in \re^{n}, \\
		&u(0,x)=u_{0}(x),&& \quad x \in \re^{n}.
	} 
	\right.
\end{equation}

We begin by a result on an existence of local-in-time solutions to problem~\eqref{eq;DD} in the uniformly local Lebesgue spaces
\begin{equation}\label{eq;lpulocnorm}
\begin{split}
	\Lpu(\re^{n})
	\equiv
	\bigg\{
	f \in L^{p}_{\text{loc}}&(\re^{n}) 
	:
	\| f \|_{\uloc{p}}
	\equiv
	\sup_{x \in \re^{n}}
	\left(
	\int_{B_{1}(x)}
	|f(y)|^{p}
	\,
	\dy
	\right)^{1/p}<+\infty
	\bigg\}
	\end{split}
\end{equation}
for $p\in [1,\infty)$ and $L^\infty_{\rm uloc}(\re^n)=L^\infty (\re^n)$.

\begin{theorem}\label{thm;main}
	For each $p$ satisfying
	\begin{equation}\label{eqn:warP}
		\begin{split}
			p \in \left[ 1,  \infty\right] \quad & \text{if} \quad n = 1, \\
			p \in \left[ \frac{3}{2}, \infty\right] \quad & \text{if} \quad n = 2, \\
			p\in \left( \frac{n}{2}, \infty\right] \quad & \text{if} \quad n \ge 3,
		\end{split}
	\end{equation} 
	and every $u_{0} \in \Lpu(\R^n)$,
	there exists $T>0$ and a unique mild solution 
	\eqspn{u \in L^{\infty}\big([0,T);\Lpu(\re^{n})\big)
		\cap C\big((0,T);\Lpu(\re^{n})\big)}
	of problem \eqref{eq;DD}. Moreover, if $u_{0} \ge 0$, then  $u(t,x) \ge 0$ almost everywhere in~$[0,T) \times \re^{n}$.
\end{theorem}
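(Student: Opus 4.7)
The plan is to recast problem~\eqref{eq;DD} as the Duhamel integral equation
\[
	u(t) = e^{t\Del}u_0 - \int_0^t \N e^{(t-s)\Del}\cdot\bigl(u(s)\,\N K* u(s)\bigr)\,\ds
\]
and solve it by the Banach contraction principle in a closed ball of the space $X_T \equiv L^\infty([0,T);\Lpu(\re^n))$. The linear ingredient is the family of $\Lpu$-smoothing estimates for the heat semigroup: starting from the Gaussian pointwise bound on the heat kernel and from the lattice decomposition $\re^n=\bigcup_{z\in\mathbb{Z}^n}B_1(z)$, one obtains, for $1\le q\le r\le\infty$ and $0<t\le 1$,
\[
	\|e^{t\Del}f\|_{\uloc{r}} \le C\,t^{-\frac{n}{2}(\frac1q-\frac1r)}\|f\|_{\uloc{q}}, \qquad
	\|\N e^{t\Del}f\|_{\uloc{r}} \le C\,t^{-\frac12-\frac{n}{2}(\frac1q-\frac1r)}\|f\|_{\uloc{q}}.
\]

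The nonlinear ingredient is a Young-type bound on convolution with $\N K$ in $\Lpu$. Using Lemma~\ref{lem:KProp}, I would split $\N K = G_{\mathrm{loc}} + G_{\mathrm{tail}}$ with $G_{\mathrm{loc}}$ compactly supported (carrying the singularity of order $|x|^{-(n-1)}$ for $n\ge 3$ and $|x|^{-1}$ for $n=2$) and $G_{\mathrm{tail}}$ exponentially decaying at infinity. A Young inequality adapted to $\Lpu$ then yields $\|\N K*u\|_{\uloc{r}} \le C\|u\|_{\uloc{p}}$ for a suitable exponent $r$; the precise range~\eqref{eqn:warP} is dictated by two competing constraints, namely the local singularity of $\N K$ (which limits how large $r$ can be taken) and the requirement that the resulting time singularity in the Duhamel integral be integrable (which forces a lower bound on the gain $\tfrac1p-\tfrac1r$). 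Balancing them gives $p>n/2$ for $n\ge 3$, $p\ge 3/2$ for $n=2$, and any $p\in[1,\infty]$ for $n=1$.

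Combining the two ingredients via H\"older in $\Lpu$ controls the bilinear operator
\[
	B(u,v)(t) \equiv \int_0^t \N e^{(t-s)\Del}\cdot\bigl(u(s)\,\N K*v(s)\bigr)\,\ds
\]
by $\|B(u,v)(t)\|_{\uloc{p}} \le C\int_0^t (t-s)^{-\alpha}\|u(s)\|_{\uloc{p}}\|v(s)\|_{\uloc{p}}\,\ds$ for some $\alpha<1$, whence $\|B(u,v)\|_{X_T}\le CT^{1-\alpha}\|u\|_{X_T}\|v\|_{X_T}$. Since $\|e^{t\Del}u_0\|_{\uloc{p}}\le\|u_0\|_{\uloc{p}}$, the map $\Phi(u)\equiv e^{t\Del}u_0 - B(u,u)$ is a contraction on the closed ball $\{u\in X_T : \|u\|_{X_T}\le 2\|u_0\|_{\uloc{p}}\}$ for $T$ small enough depending only on $\|u_0\|_{\uloc{p}}$, yielding existence and uniqueness of the mild solution. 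Strong continuity $u\in C((0,T);\Lpu)$ follows from continuity of $t\mapsto e^{t\Del}u_0$ for $t>0$ together with dominated convergence applied to the Duhamel term.

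For nonnegativity, I would approximate $u_0\ge 0$ in $\Lpu$ by smooth, bounded, compactly supported nonnegative data $u_0^{(k)}$, produce classical nonnegative solutions $u^{(k)}$ by applying the parabolic maximum principle to the linear drift-diffusion equation $w_t - \Del w + \N\psi\cdot\N w + (\Del\psi)w = 0$ with $\psi = K*u^{(k)}$ treated as a given coefficient, and pass to the limit using uniqueness. The main technical obstacle lies in the second paragraph: a classical Young inequality in $L^p$ does not apply because $u$ is only locally integrable, so the convolution bound $\|\N K*u\|_{\uloc{r}}\le C\|u\|_{\uloc{p}}$ has to be obtained by careful summation over unit cubes, and the dimensional split in~\eqref{eqn:warP} must be tracked through both the local and the tail parts of $\N K$.
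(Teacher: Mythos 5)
Your existence and uniqueness argument follows essentially the same route as the paper: the Duhamel formulation, the $\Lpu$ smoothing estimates for the heat semigroup, a Young-type inequality for $\N K\ast{}$ obtained by summation over a lattice of unit cubes, and a contraction in $L^{\infty}\big([0,T);\Lpu(\re^{n})\big)$. That part of the plan is sound (the paper handles the tail of $\N K$ by monotonicity of $|\N K|$ rather than an explicit local/tail split, but this is a cosmetic difference; note also that for $n=2$ the borderline $p\ge\frac32$ actually comes from the H\"older pairing constraint $r\ge1$ in $\frac1r=\frac1p+\frac1k$, not only from time-integrability of the Duhamel singularity).

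The genuine gap is in the nonnegativity argument. You propose to approximate $u_{0}\ge0$ by smooth, bounded, \emph{compactly supported} nonnegative data in $\Lpu$; but compactly supported functions are not dense in $\Lpu(\re^{n})$ --- for instance $u_{0}\equiv1$ is at distance at least $|B_{1}|^{1/p}$ from every compactly supported function in the $\|\cdot\|_{\uloc{p}}$ norm --- so this approximation fails at the first step. Moreover, ``pass to the limit using uniqueness'' is not a proof: uniqueness of the limit solution does not by itself give convergence $u^{(k)}(t)\to u(t)$ in $\Lpu$, which is what transfers nonnegativity. Even with the natural choice $u_{0}^{\ep}=e^{\ep\Del}u_{0}$ one does \emph{not} in general have $u_{0}^{\ep}\to u_{0}$ in $\Lpu$ (this holds only on the smaller space $\mathcal{L}_{\mathrm{uloc}}^{p}(\re^{n})=\overline{BUC}^{\|\cdot\|_{\uloc{p}}}$), so the stability clause of the fixed-point theorem cannot be applied directly to the data. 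The paper circumvents this by a Volterra-type inequality which bounds $\|u^{\ep}(t)-u(t)\|_{\uloc{p}}$ in terms of $\|e^{s\Del}u_{0}^{\ep}-e^{s\Del}u_{0}\|_{\uloc{p}}=\|e^{\ep\Del}(e^{s\Del}u_{0})-e^{s\Del}u_{0}\|_{\uloc{p}}$ for $s\in(0,t]$, and these quantities tend to $0$ because $e^{s\Del}u_{0}$ lies in $\mathcal{L}_{\mathrm{uloc}}^{p}(\re^{n})$ for $s>0$; convergence is thus obtained at each \emph{positive} time, which suffices. Your sketch is missing exactly this mechanism.
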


The more-or-less standard proof of Theorem~\ref{thm;main} is based on the Banach contraction principle applied to an integral representation of  solutions to problem~\eqref{eq;DD} (see Section~\ref{sec:LiTS} for more details).

Now, we formulate a simple consequence of Theorem \ref{thm;main} in the case when an initial condition is an $L^p$-perturbation of a constant $A\in \R$.

\begin{corollary}
	\label{lem;LitSol}
	Let 
	$p$ satisfy conditions \eqref{eqn:warP}. 
	For every $A \in \R$ and every $v_0 \in L^p(\R^n)$
	there exists a unique local-in-time mild solution $u=u(t,x)$ of problem \eqref{eq;DD} 
	(as stated in Theorem \ref{thm;main}) corresponding to the initial datum 
	$u_0 = A + v_0 \in \Lpu (\R^n)$. 
	This solution satisfies 
	$u - A \in C\big([0,T );L^p(\R^n)\big)$.
\end{corollary}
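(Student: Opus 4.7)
The plan is to convert \eqref{eq;DD} into a perturbative integral equation for $w := u - A$ and solve it by a contraction argument in $C([0,T]; L^p(\R^n))$. Since $L^p(\R^n) \hookrightarrow \Lpu(\R^n)$ and constants belong to $\Lpu$, the datum $u_0 = A + v_0$ lies in $\Lpu$, and Theorem~\ref{thm;main} produces the mild solution $u$ in $L^\infty([0,T); \Lpu)$. Using $\int_{\R^n} K \dx = 1$ (equivalently $K * 1 \equiv 1$), the convolution $\nabla K * A$ vanishes; substituting $u = A + w$ into the mild formula for $u$ and observing $e^{t\Delta} A = A$ yields
\begin{equation}
w(t) = e^{t\Delta} v_0 - \int_0^t e^{(t-s)\Delta} \nabla \cdot \bigl( (A + w(s))\, \nabla K * w(s) \bigr) \ds.
\end{equation}

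I would then apply Banach's fixed-point theorem to the associated map on a small closed ball of $C([0,T]; L^p)$ with $T$ small. The ingredients are (i) $\|e^{t\Delta}\|_{L^p \to L^p} \le 1$, (ii) the smoothing estimate $\|e^{t\Delta} \nabla \cdot F\|_{L^p} \le C t^{-1/2 - (n/2)(1/q - 1/p)}\|F\|_{L^q}$ for $1 \le q \le p$, and (iii) the convolution bound $\|\nabla K * w\|_{L^s} \le \|\nabla K\|_{L^b} \|w\|_{L^p}$ with $1 + 1/s = 1/b + 1/p$, where $\nabla K \in L^b(\R^n)$ for every $b \in [1, n/(n-1))$ by the $|x|^{1-n}$ singularity and exponential decay supplied by Lemma~\ref{lem:KProp}. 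The linear-in-$w$ term uses $b = 1$ and $q = p$, producing the integrable time weight $(t-s)^{-1/2}$; the quadratic term $\|w\,(\nabla K * w)\|_{L^q} \le \|w\|_{L^p}\|\nabla K * w\|_{L^s}$ with $1/q = 1/p + 1/s$ yields a time weight $(t-s)^{-1/2 - (n/2)(1/q - 1/p)}$ whose integrability near $s = t$ is secured by choosing $b$ close to $n/(n-1)$, which translates to the lower bound $p > n/2$ in~\eqref{eqn:warP}.

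The fixed point $\tilde w \in C([0,T]; L^p)$ thereby obtained yields $\tilde u := A + \tilde w \in L^\infty([0,T]; \Lpu)$, which is a mild solution of \eqref{eq;DD} with initial datum $u_0$. By the uniqueness in Theorem~\ref{thm;main}, $\tilde u = u$, hence $u - A = \tilde w \in C([0,T]; L^p)$, as claimed.

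The principal technical obstacle is the quadratic term $w\,(\nabla K * w)$: a naive bound $\|\nabla K * w\|_{L^\infty} \le \|\nabla K\|_{L^{p'}}\|w\|_{L^p}$ would require $p > n$, far more restrictive than~\eqref{eqn:warP}. The resolution is to trade $L^\infty$-regularity of $\nabla K * w$ for the $L^b$-integrability of $\nabla K$ itself (with $b$ just below $n/(n-1)$), letting the heat-kernel smoothing absorb the remaining singularity in time.
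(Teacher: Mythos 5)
Your argument is correct and follows the same skeleton as the paper's: reduce the statement to solving the perturbed integral equation for $w=u-A$ in $C\big([0,T);L^p(\R^n)\big)$ by a contraction argument, then identify $A+w$ with the $\Lpu$-solution of Theorem~\ref{thm;main} through the uniqueness asserted there. The one structural difference lies in the treatment of the term linear in $w$. You keep the heat semigroup and carry $\nabla\cdot(A\,\nabla K*w)=A\Delta K*w$ inside the Duhamel integral, controlling it by $\|\nabla K\|_1$ and the $(t-s)^{-1/2}$ smoothing; this makes your fixed-point map affine-plus-bilinear rather than purely bilinear, so Proposition~\ref{prop;Banach-fixed-pt} does not apply verbatim and you need, as you indicate, a direct contraction on a small ball, the linear part contributing a Lipschitz factor of order $|A|\,T^{1/2}$ that is absorbed by shrinking $T$. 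The paper instead absorbs $A\Delta K*$ into the generator and works with the semigroup $S_A(t)=e^{t(\Delta-A\Delta K*)}$ (Proposition~\ref{prop;local-exist} via Lemma~\ref{lem:GradSemiEstConv}), obtaining a purely bilinear Duhamel term that fits Proposition~\ref{prop;Banach-fixed-pt} directly; the cost is the extra semigroup estimates of Lemma~\ref{lem;linearized-linear-DD-2}, the gain is that the same formulation is reused later for the global existence and instability results. Your Young-inequality bookkeeping for the quadratic term --- trading $L^\infty$ control of $\nabla K*w$ for the $L^b$-integrability of $\nabla K$ with $b$ below $n/(n-1)$ --- is exactly the mechanism of Lemma~\ref{lem;Duhamel-term-est}, and the exponent constraints you obtain are consistent with \eqref{eqn:warP}. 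Two minor remarks: the normalization $\int K\,\dx=1$ is immaterial, since what you actually use is $K*A=A$, which follows from $\psi\equiv A$ being the tempered solution of $-\Delta\psi+\psi=A$; and continuity at $t=0$ of $e^{t\Delta}v_0$ in $L^p$ fails at the endpoint $p=\infty$, an issue your proof shares with the paper's Proposition~\ref{prop;local-exist}.
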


This corollary is an immediate  consequence of the uniqueness of solutions established in Theorem~\ref{thm;main} combined with the uniqueness result of solutions to the perturbed problem considered in Proposition \ref{prop;local-exist}, below.
 
Next,
we show that one can  construct \git solutions around each constant solution  $A \in [0, 1)$.   
\begin{theorem}\label{thm;global-exist}
	Let $A \in [0, 1)$. Assume that the exponent $p$ satisfies conditions~\eqref{eqn:warP} and moreover $p \le n$. Fix $q\in (n,2p]$.
	 There exists $\varepsilon >0$ such that for every $v_0 \in L^p(\re^n)$ with $\| v_0\|_{p} < \varepsilon$, problem~\eqref{eq;DD} with the initial condition $u_0 = A + v_0$ has a unique \git mild solution $u(t,x)$ satisfying 
	$
	u-A \in C\big([0,\infty);L^p(\R^n)\big) 
	$ 
	 and 
	$$ 
	 \| u(t) - A \|_{p}+
	 t^{\frac{n}{2}(\frac{1}{p} - \frac{1}{q})} \| u(t) - A \|_{q} 
	 \leq C \| u_0 - A \|_{p}
	$$
	for a constant $C>0$ and all $t>0$.
\end{theorem}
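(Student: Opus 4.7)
The plan is to reduce the equation to a perturbation problem around zero for $v := u - A$ and to apply the Banach contraction principle to the corresponding mild formulation in a space that captures both uniform boundedness in $L^p$ and weighted $L^q$-decay.

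First I would derive the equation satisfied by $v$. Since $K \in L^1(\re^n)$, a constant satisfies $\nabla K * A = 0$, so that $u \nabla K * u = (A + v)\nabla K * v$. Using also $\Delta K = K - \delta$, which follows from $-\Delta K + K = \delta$, the equation becomes
\begin{equation*}
v_t = Lv - \nabla \cdot (v \nabla K * v), \qquad L := \Delta + A(I - K*).
\end{equation*}
The operator $L$ has Fourier multiplier $m(\xi) = -|\xi|^2(1 + |\xi|^2 - A)/(1 + |\xi|^2)$, and for $A \in [0,1)$ one checks that $m(\xi) \le -c(A)|\xi|^2$ with $c(A) > 0$, so that $\{e^{tL}\}_{t \ge 0}$ enjoys heat-kernel-type $L^p$--$L^q$ bounds together with the usual smoothing of $\nabla e^{tL}$. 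I would prove these as an auxiliary lemma, for example via a Gaussian upper bound on the convolution kernel.

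Next I would work with the mild equation
\begin{equation*}
v(t) = e^{tL} v_0 - \int_0^t \nabla \cdot \bigl[ e^{(t-s)L}\bigl(v(s)\,\nabla K * v(s)\bigr)\bigr]\,ds
\end{equation*}
in the Banach space
\begin{equation*}
X := \Bigl\{ v : \|v\|_X := \sup_{t>0} \|v(t)\|_p + \sup_{t>0} t^{\sigma}\|v(t)\|_q < \infty \Bigr\}, \qquad \sigma = \frac{n}{2}\Bigl(\frac{1}{p} - \frac{1}{q}\Bigr),
\end{equation*}
and apply the contraction mapping theorem on a small ball around the origin. The linear bound $\|e^{tL} v_0\|_X \le C\|v_0\|_p$ is immediate from the semigroup estimates, so the real work is the bilinear estimate $\|B(v,w)\|_X \le C\|v\|_X\|w\|_X$ for $B(v,w)(t) = \int_0^t \nabla \cdot e^{(t-s)L} (v \nabla K * w)\,ds$. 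To bound the integrand I would combine Hölder's inequality with two facts about $\nabla K$: since $\nabla K \in L^1(\re^n)$, Young's inequality gives $\|\nabla K * w\|_s \le C\|w\|_s$ for every $s$; and since $\nabla K \in L^{q'}(\re^n)$ for $q > n$, one gets $\|\nabla K * w\|_\infty \le C\|w\|_q$. Combined with the smoothing of $\nabla e^{(t-s)L}$, these produce time integrals of Beta-function type whose rate I would then compare with $t^{-\sigma}$.

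The hard part will be closing those time integrals uniformly in $t$. The assumptions $p > n/2$ and $q > n$ give convergence of the relevant integrals near $s = 0$ and $s = t$ respectively, while the extra restriction $p \le n$ is exactly what is needed for the decay of the bilinear term in $L^q$ to match the target rate $t^{-\sigma}$ at large times. Because the kernel $K$ has an intrinsic length scale the problem is not scale-invariant, so short and long time regimes behave differently, and I expect to split the Duhamel integral at $s = t/2$ and to choose the intermediate Hölder exponents with some care. Once the fixed point is constructed, the bound in the statement reads off from the definition of $\|\cdot\|_X$, continuity $v \in C([0,\infty);L^p)$ follows in the standard way from the integral equation, and uniqueness in this class is inherited from Theorem~\ref{thm;main} and Corollary~\ref{lem;LitSol}.
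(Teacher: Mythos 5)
Your proposal follows essentially the same route as the paper: the substitution $v=u-A$, the linearized semigroup generated by $\Delta - A\Delta K\ast$ (your operator $L=\Delta+A(I-K\ast)$ is the same operator, since $\Delta K=K-\delta$), the Banach space combining $\sup_{t>0}\|v(t)\|_p$ with $\sup_{t>0}t^{\sigma}\|v(t)\|_q$, and a fixed point argument using two complementary bounds on the Duhamel term; your splitting of the time integral at $s=t/2$ versus the paper's taking the minimum of a short-time and a long-time estimate is an immaterial difference, and your bookkeeping with $\nabla K\in L^1\cap L^{q'}$ matches Lemmas~\ref{lem:GradSemiEstConv} and~\ref{lem:GradConEst}.

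The one place where you are too quick is the decay estimate for the linearized semigroup, which is the technical heart of the result. The symbol bound $m(\xi)\le-(1-A)|\xi|^2$ gives the $L^2$ estimates by Plancherel, but it does not transfer to $L^p$ for $p\ne2$: pointwise domination of Fourier multipliers says nothing about the kernels, and the naive factorization $e^{tL}=e^{tA}\,e^{t\Delta}\,e^{-tAK\ast}$ only yields $\|e^{-tAK\ast}\|_{p\to p}\le e^{tA\|K\|_1}$, i.e.\ the exponentially growing bound of Lemma~\ref{lem;linearized-linear-DD-2}, which is useless for global existence. Proving $\|S_A(t)v_0\|_p\le Ct^{-\frac n2(\frac1q-\frac1p)}\|v_0\|_q$ uniformly in $t>0$ (Theorem~\ref{lem;Lp-Lq-estimate-S}) is what Section~4.2 of the paper is for: one shows $\sup_{t\ge1}\|\mu_A(t)\|_1<\infty$ via the Fourier-side derivative bounds $\|\partial^{\alpha}_{\xi}\widehat{\mu}_A(t)\|_2^2\le Ct^{|\alpha|-\frac n2}$ of Lemma~\ref{lem;DNmu-estim} combined with the interpolation inequality of Lemma~\ref{lem:ineq-bh}, and then factors off a piece of the heat semigroup to get the $L^q$--$L^p$ smoothing. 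Your suggested shortcut, a ``Gaussian upper bound on the convolution kernel,'' is not obviously available --- the kernel $\mu_A(t,\cdot)$ need not be sign-definite, and a pointwise Gaussian bound is at least as hard to obtain as the uniform $L^1$ bound --- so this auxiliary lemma requires a genuine argument. Once it is in place, the rest of your plan goes through as in the paper.
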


The smallness assumption imposed on initial conditions in Theorem~\ref{thm;global-exist} seems to be necessary. This is clear in the case $A = 0$, where sufficiently large initial data lead to solutions which blow-up in finite time, see \textit{e.g.} \cite{BCKZ,CPZ,KS11,KSJ}  for blow-up results for solutions of system \eqref{eq:IntDD} considered in the whole space.

Next, we deal with $A>1$ which appears to be the unstable constant stationary solution.

\begin{theorem}\label{thm;instability}
	The constant stationary solution $A>1$ of problem \eqref{eq;DD} is not stable in the Lyapunov sense under small perturbations from $L^{p}(\R^n)$ for each $p$ satisfying condition~\eqref{eqn:warP} except $p = 1$ and $p = \infty$.
\end{theorem}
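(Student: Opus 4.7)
The plan is to deduce nonlinear Lyapunov instability in $L^{p}(\R^{n})$ from linear spectral instability of the linearization at the constant state $A$, via the classical escape-time / bootstrap argument.

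\textbf{Reformulation.} Setting $v := u - A$ and using that $\N K * A \equiv 0$ (since $K * A = A\int K = A$), problem \eqref{eq;DD} is equivalent to
\eqn{
v_{t} = L v - \N \cdot \bigl( v\, \N K * v \bigr), \qquad L v := \Del v + A v - A\, K * v.
}
The linearization $L$ is the Fourier multiplier with symbol
\eqn{
\lam(\xi) = - |\xi|^{2} + A - \frac{A}{|\xi|^{2} + 1} = \frac{|\xi|^{2}\bigl( A - 1 - |\xi|^{2} \bigr)}{|\xi|^{2} + 1}.
}
For $A > 1$, $\lam$ is strictly positive on $\{0 < |\xi|^{2} < A - 1\}$ and attains a positive maximum $\sigma_{0} = (\sqrt{A} - 1)^{2}$ at $|\xi_{0}|^{2} = \sqrt{A} - 1$; this is the source of the instability.

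\textbf{Growing mode.} Fix a small $\eta > 0$ and pick a real-valued Schwartz function $\phi$ whose Fourier transform is a smooth bump supported symmetrically near $\pm\xi_{0}$ inside $\{\lam \geq \sigma_{0} - \eta\}$. Then $\phi \in L^{r}(\R^{n})$ for every $r \in [1, \infty]$, and Plancherel together with duality yields
\eqn{
\|e^{tL}\phi\|_{p}\,\|\phi\|_{p'} \geq \int_{\re^{n}} e^{t\lam(\xi)}\,|\widehat{\phi}(\xi)|^{2} \,{\rm d}\xi \geq e^{t(\sigma_{0} - \eta)}\,\|\phi\|_{2}^{2},
}
so that $\|e^{tL}\phi\|_{p} \geq c_{\phi}\, e^{t(\sigma_{0} - \eta)}$ for a constant $c_{\phi} > 0$. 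Since $e^{tL}\phi$ remains Fourier-supported on $\operatorname{supp}\widehat{\phi}$, the $L^{2}$-bound $\|e^{tL}\phi\|_{2} \leq e^{t\sigma_{0}}\|\phi\|_{2}$ and Bernstein's inequality give the matching upper bound $\|e^{tL}\phi\|_{q} \leq C_{\phi}\, e^{t\sigma_{0}}$ for every $q \geq 2$.

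\textbf{Escape time.} Fix $\eps_{0} > 0$ small, and for $\delta > 0$ denote by $v_{\delta}$ the local mild solution of \eqref{eq;DD} with initial datum $A + \delta \phi$ supplied by Theorem~\ref{thm;main}. Set
\eqn{
T_{\delta} := \sup\bigl\{\, t > 0 \,:\, \|v_{\delta}(s)\|_{p} \leq \eps_{0}\ \text{for every } s \in [0, t]\,\bigr\}.
}
The Duhamel formula reads
\eqn{
v_{\delta}(t) = \delta\, e^{tL}\phi - \int_{0}^{t} e^{(t - s)L}\, \N \cdot \bigl( v_{\delta}(s)\, \N K * v_{\delta}(s) \bigr) \, \ds.
}
Since $M := A(I - K *)$ is bounded on $L^{p}$ (by Young's inequality, as $K \in L^{1}$) and commutes with $\Del$, one has $e^{\tau L} = e^{\tau M} e^{\tau \Del}$, whence the smoothing estimate $\| e^{\tau L} \N \cdot f \|_{p} \leq C e^{C\tau} \tau^{- 1/2} \|f\|_{p}$. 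Combined with the bilinear bound
\eqn{
\| v\, \N K * v \|_{p} \leq \|v\|_{p}\, \| \N K * v \|_{\infty} \leq C\, \|v\|_{p}\, \|v\|_{q}, \qquad q > n,
}
(obtained from $\N K * v = \N (I - \Del)^{-1} v$ and the Sobolev embedding $W^{1, q} \hookrightarrow L^{\infty}$) together with a coupled bootstrap in the $L^{q}$-norm in the spirit of Theorem~\ref{thm;global-exist}, one shows that on $[0, T_{\delta}]$ the nonlinear integral is $o\bigl(\delta\, e^{t(\sigma_{0} - \eta)}\bigr)$ provided $\eps_{0}$ is small enough; hence
\eqn{
\|v_{\delta}(t)\|_{p} \geq \tfrac{1}{2}\, c_{\phi}\, \delta\, e^{t(\sigma_{0} - \eta)} \qquad \text{for } t \in [0, T_{\delta}].
}
Setting $T_{*} := (\sigma_{0} - \eta)^{-1} \log\bigl( 2 \eps_{0} / (c_{\phi} \delta) \bigr)$ forces $\|v_{\delta}(T_{*})\|_{p} \geq \eps_{0}$, so $T_{\delta} \leq T_{*} < \infty$ for \emph{every} $\delta > 0$. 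Since $\|\delta\phi\|_{p}$ can be made arbitrarily small while $\|v_{\delta}\|_{p}$ reaches $\eps_{0}$ in finite time, $A$ is not Lyapunov stable in $L^{p}$.

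\textbf{Main obstacle.} The delicate point is the bilinear estimate in the regime $p \leq n$: here $\N K \notin L^{p'}$, so the nonlinearity cannot be closed using only $\|\cdot\|_{p}$. This forces a simultaneous bootstrap in an auxiliary space $L^{q}$ with $q > n$, as in Theorem~\ref{thm;global-exist}, and one must verify that the growth of the $L^{q}$-norm over the escape interval $[0, T_{*}]$ does not destroy the linear lower bound in $L^{p}$. Balancing the two norms against the exponentially growing linear mode is the technical heart of the proof, and is also the reason the endpoints $p = 1$ and $p = \infty$ must be excluded.
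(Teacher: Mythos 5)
Your skeleton is the same as the paper's: a growing mode for the linearization plus an escape-time/bootstrap argument in the spirit of Friedlander--Pavlovi\'c--Shvydkoy and Shatah--Strauss. Your explicit Fourier-bump construction of the growing mode, with the duality bound $\|e^{tL}\phi\|_p\,\|\phi\|_{p'}\ge\int e^{t\lambda(\xi)}|\widehat\phi(\xi)|^2\,{\rm d}\xi$, is a nice concrete substitute for the paper's abstract route through the approximate point spectrum (Lemmas \ref{lem;spectrum} and \ref{lem:SMis}), and your symbol computation and the value $\sigma_0=(\sqrt A-1)^2$ are correct. The genuine gap is in the nonlinear step, which you leave as ``one shows.'' The Duhamel propagator bound you actually derive, $\|e^{\tau L}\N\cdot f\|_p\le Ce^{C\tau}\tau^{-1/2}\|f\|_p$ with $C=\|A(I-K\ast)\|_{L^p\to L^p}$, carries an exponential rate that has nothing to do with $\sigma_0$ and is in general much larger than $2\sigma_0$. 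Since the escape time is $T_*\sim\sigma_0^{-1}\log(1/\delta)$, under the bootstrap ansatz $\|v(s)\|_p\lesssim\delta e^{\sigma_0 s}$ the nonlinear integral is of size $\delta^2\int_0^t e^{C(t-s)}(t-s)^{-1/2}e^{2\sigma_0 s}\,{\rm d}s\sim\delta^2e^{Ct}$, which at $t=T_*$ is of order $\delta^{2-C/\sigma_0}$ and blows up as $\delta\to0$ whenever $C>2\sigma_0$. The argument only closes if the propagator grows no faster than $e^{(\sigma_0+\eps)\tau}$ with $\eps<\sigma_0$; on $L^2$ this is immediate from the symbol, but on $L^p$ with $p\ne2$ it requires identifying the spectrum of the closure of $\OPL$ (Lemma \ref{lem;spectrum}, via Wong's theorem) and converting it into the growth bound of Lemma \ref{lem;upperA}. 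This is the missing ingredient, and it is also where the endpoints $p=1,\infty$ are excluded --- not, as you suggest, in the bilinear estimate.

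Relatedly, your ``main obstacle'' is not an obstacle. You claim that for $p\le n$ the nonlinearity cannot be closed using $\|\cdot\|_p$ alone and therefore set up a coupled $L^p$--$L^q$ bootstrap, which you never execute. In fact, under condition \eqref{eqn:warP} one lets the product land in a \emph{smaller} Lebesgue exponent: $\|v\,\N K\ast w\|_r\le\|v\|_p\|\N K\ast w\|_k\le C\|v\|_p\|w\|_p$ with $\frac1r=\frac1p+\frac1k$, $k>n$, and $\N K\in L^{q_1}$ for $1+\frac1k=\frac1p+\frac1{q_1}$, $q_1<\frac n{n-1}$ (possible precisely because $p>\frac n2$), and then applies the $L^r\to L^p$ smoothing of $\N S_A(t-s)$. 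This is Lemma \ref{lem:GradSemiEstConv}: the estimate closes entirely in $L^p$ at the price of the integrable singularity $(t-s)^{-\frac n{2k}-\frac12}$. Adopting it would let you drop the auxiliary $L^q$ norm and the Sobolev-embedding step from your argument altogether.
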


In this theorem, we do not claim that solutions corresponding to $L^p$-per\-tur\-ba\-tions of $A > 1$ are global-in-time. We show only that if they are \git then they cannot be stable.

To conclude this section, we notice that a constant $A < 0$ is linearly stable which we comment in Remark \ref{rem;Abelow0}, below. The proof of nonlinear stability of this constant can be obtained by the method used in the proof of Theorem \ref{thm;global-exist}. We add some comments on the linear stability of the constant solution $A = 1$ in Remark \ref{rem;stabA1}, below.


\section{Local-in-time solutions in uniformly local Lebesgue spaces}
\label{sec:LiTS}

We find solutions to problem \eqref{eq;DD} via its formulation in the integral form
\eq{\label{eq;IE}
	u(t)
	=
	e^{t\Del}u_{0}
	-
	\int_{0}^{t}
	\N e^{(t-s)\Del}
	\cdot \big(u(s) \N K* u(s)\big)
	\, \d{s}
}
with the heat semigroup given by the formula
$$
(e^{t\Del}f)(x)
\equiv
(4\pi t)^{-\frac{n}2}
\int_{\re^{n}}
e^{-\frac{|x-y|^{2}}{4t}}
f(y)
\,
\dy.
$$
This construction requires  auxiliary results which we are going to gather and prove below. Then, Theorem~\ref{thm;main} is proved at the end of this section. We begin by recalling properties of the heat semigroup $\{ e^{t\Delta } \}_{t \ge 0}$ acting on the $\Lpu$-spaces. 

\begin{proposition}[\cite{Ar-Ro-Ch-Dl,Ma-Te}] \label{prop;heat-semi-uloc}
	
	For all $1 \le q \le p \le \infty$, $k \in \mathbb{Z}_{+}$ and $\alpha \in \mathbb{Z}_{+}^{n}$ there exist numbers
	$C=C(n,p,q,k,\alpha) >0$
	such that
	\eq{\label{eq;heat-semi-Lp-Lq}
		\| \pt_{t}^{k}\pt_{x}^{\alpha}{e}^{t\Del}f \|_{\uloc{p}}
		\le
		C t^{-k-\frac{|\alpha|}2}
		\big(
		1
		+
		{t^{-{\frac{n}2}( \frac1q-\frac1p)}}
		\big)
		\| f \|_{\uloc{q}}
	}
	for every $f \in \Lpu(\re^{n})$ and all $t>0$. 
	In particular, when $p=q$ and $1 \le p \leq \infty$, it holds that
	\eq{\label{eq;heat-semi-Lp-Lp}
	 	\spl{
			\| {e}^{t\Del}f \|_{\uloc{p}}
			\le 
			\| f \|_{\uloc{p}} \qquad \text{for all} \quad t\geq 0.
 }
}
\end{proposition}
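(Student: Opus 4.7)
The plan is to combine pointwise Gaussian bounds on the derivative kernels $\partial_t^k\partial_x^\alpha G_t$ (where $G_t$ denotes the standard heat kernel, so that $e^{t\Del}f = G_t\ast f$) with a lattice decomposition that reduces $L^q_\mathrm{uloc}\to L^p_\mathrm{uloc}$ bounds to ordinary Young-type convolution estimates on unit-sized pieces. Since the statement itself appears in \cite{Ar-Ro-Ch-Dl,Ma-Te}, I will describe only the structural mechanism. A direct calculation would show $|\partial_t^k\partial_x^\alpha G_t(x)|\le C t^{-k-|\alpha|/2} G_{2t}(x)$, which reduces \eqref{eq;heat-semi-Lp-Lq} to the case $k=\alpha=0$ with an explicit prefactor $t^{-k-|\alpha|/2}$; the derivative version then follows by composition together with \eqref{eq;heat-semi-Lp-Lp}.

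For the undifferentiated estimate, I would fix $x_0\in\re^n$ and decompose $f = \sum_{k\in\mathbb{Z}^n} f\chi_{Q_k}$ where $Q_k$ are unit cubes centered on the integer lattice at points $z_k$. Applying Minkowski's inequality yields
\begin{equation*}
\|e^{t\Del}f\|_{L^p(B_1(x_0))} \le \sum_k \|G_t\ast(f\chi_{Q_k})\|_{L^p(B_1(x_0))}.
\end{equation*}
For indices $k$ with $|z_k-x_0|\le 2$, Young's convolution inequality with $1/r = 1+1/p-1/q$ bounds each contribution by $C t^{-\frac{n}{2}(\frac{1}{q}-\frac{1}{p})}\|f\|_{\uloc{q}}$, and the number of such terms is $O(1)$. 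For the remaining, distant $k$, the elementary estimate $|x-y|\ge c_n|z_k-x_0|$ on $B_1(x_0)\times Q_k$ combined with the Gaussian decay of $G_t$ and H\"older's inequality would produce a term decaying like $e^{-c|z_k-x_0|^2/t}\|f\|_{\uloc{q}}$. A Riemann-type comparison of the far sum with $\int_{\re^n}e^{-c|z|^2/t}\,\dy = O(t^{n/2})$ together with a balancing of the regimes $t\le 1$ and $t\ge 1$ would then produce exactly the prefactor $C(1+t^{-\frac{n}{2}(\frac{1}{q}-\frac{1}{p})})$ claimed in \eqref{eq;heat-semi-Lp-Lq}.

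The contractivity \eqref{eq;heat-semi-Lp-Lp} follows immediately from Minkowski's integral inequality and the translation invariance of the $L^p_\mathrm{uloc}$ norm: since $\|f(\cdot-y)\|_{\uloc{p}} = \|f\|_{\uloc{p}}$ for every $y\in\re^n$,
\begin{equation*}
\|e^{t\Del}f\|_{\uloc{p}} \le \int_{\re^n} G_t(y)\,\|f(\cdot-y)\|_{\uloc{p}}\,\dy = \|f\|_{\uloc{p}},
\end{equation*}
which delivers the stated contractivity constant of one directly, without any additional work. The main obstacle I expect is the Gaussian-sum bookkeeping in the second paragraph: the near/far splitting must be arranged so that the $O(t^{n/2})$ growth from the far sum, once multiplied by the small-time Young prefactor $t^{-\frac{n}{2}(\frac{1}{q}-\frac{1}{p})}$, is absorbed into the asserted $1+t^{-\frac{n}{2}(\frac{1}{q}-\frac{1}{p})}$ term rather than generating a worse power of $t$ in either the short-time or the long-time regime.
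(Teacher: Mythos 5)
The paper itself gives no proof of this proposition (it is quoted from \cite{Ar-Ro-Ch-Dl,Ma-Te}), and your sketch is a correct reconstruction of the standard argument from those references: the pointwise Gaussian bound $|\partial_t^k\partial_x^\alpha G_t|\le Ct^{-k-|\alpha|/2}G_{2t}$ reduces everything to $k=\alpha=0$, the lattice splitting handles $q<p$, and Minkowski's integral inequality plus translation invariance of $\|\cdot\|_{\uloc{p}}$ yields \eqref{eq;heat-semi-Lp-Lp} with constant exactly one. The bookkeeping you flag as the main obstacle does close: the far terms carry the kernel's sup bound $t^{-n/2}$ rather than the Young prefactor, so the Riemann comparison $\sum_{|z_k-x_0|>2}e^{-c|z_k-x_0|^2/t}\le C(1+t^{n/2})$ makes their total contribution $O(\|f\|_{\uloc{q}})$ uniformly in $t$, which is absorbed into the summand $1$ of the factor $1+t^{-\frac{n}{2}(\frac1q-\frac1p)}$ in \eqref{eq;heat-semi-Lp-Lq}.
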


\begin{remark}\label{rem:heatest}
Proposition \ref{prop;heat-semi-uloc} generalizes the following well-known estimates of the heat semigroup acting on the Lebesgue space $L^p(\R^n)$
	\eqspn{\label{eq;heat-semi-Lp-Lq-normal}
		\| \pt_{t}^{k}\pt_{x}^{\alpha}{e}^{t\Del}f \|_{p}
		\le
		C {t^{-{\frac{n}2}( \frac1q-\frac1p)-k-\frac{|\alpha|}2}}
		\| f \|_{q} 
		\quad 
		\text{for all} 
		\quad t>0. 
		}
\end{remark}

\begin{remark}\label{prop;uloc-equivalent-cond-closure}
	It follows from estimates \eqref{eq;heat-semi-Lp-Lq} applied with $k=0$, $\alpha =0$ and $p=q$ that $e^{t\Delta} f \in L^\infty\big( (0, \, \infty) ; \Lpu(\R^n)\big)$ for each $f\in \Lpu(\R^n)$ and, in general, this mapping in not continuous in time. This continuity holds true in the smaller space
	$$
	\mathcal{L}_{\mathrm{uloc}}^{p}(\re^{n})
	=
	\overline{BUC(\re^{n})}^{\| \cdot \|_{\uloc{p}}},
	$$
	where $BUC(\re^{n})$ is a space of all bounded uniformly continuous functions on~$\re^{n}$. In fact, the following statements are equivalent:
	\begin{enumerate}
		\item $f \in \mathcal{L}_{\mathrm{uloc}}^{p}(\re^{n})$.
		\item 
		$
			\| f(\cdot + h) - f \|_{\uloc{p}} \to 0 $ as  $|h| \to 0.
		$
		\item
		$
		\|
		e^{ t \Del}f 
		-f
		\|_{\uloc{p}}
		\to
		0$ as $t \to 0$.
	\end{enumerate}
	We refer the reader to \cite[Proposition 2.2]{Ma-Te} for the proof of these properties.
\end{remark}

\begin{remark}\label{lem;uloc-norm-scaling} 
	There is an alternative definition of the norm in the uniformly local $L^p$-spaces
	$$
	\| f \|_{\uloc{p},\rho}
	\equiv
	\sup_{x \in \re^{n}}
	\Big(
	\int_{B_{\rho}(x)}
	|f(y)|^{p}
	\,
	\dy
	\Big)^{1/p}
	$$
	for each $\rho > 0$. However, by a simple scaling property, we can show that all these norms are in fact equivalent.
	
\end{remark}

Let us also recall properties of the Bessel kernel which are systematically used in this work.
\begin{lemma}\label{lem:KProp}
	Denote by $\psi \in \mathcal{S}'(\re^n)$ a solution of the equation $-\Delta\psi + \psi = u$ for some $u \in \mathcal{S}'(\re^n)$. The following statements hold true.
	\begin{enumerate}
		\item $\psi = K*u$, where $\widehat{K}(\xi) = \frac{1}{1+|\xi|^2}$.
		\item For $n=1$, $K(x) = \frac{1}{2} e^{-|x|}$.
		\item For $n\ge 2$, $K\in L^1(\R^n) \cap L^p(\R^n)$ for each $p\in \big[1, \, \frac{n}{n-2} \big)$ and $\N K \in L^1(\R^n) \cap L^q(\R^n)$ for each $q \in \big[ 1,\frac{n}{n-1}\big)$.
		\item $|\N K(x)|=|K'(|x|)|=-K'(|x|)$ is radially symmetric and decreasing in $|x|$.
	\end{enumerate}
\end{lemma}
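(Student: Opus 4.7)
The plan is to reduce each assertion to Fourier analysis together with the standard subordination representation of the Bessel kernel. Parts (1) and (2) are elementary, while (3) and (4) follow from a single pointwise integral formula for $K$.

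For (1), I would apply the Fourier transform to the equation $-\Delta\psi+\psi=u$ to obtain $(1+|\xi|^2)\widehat{\psi}(\xi)=\widehat{u}(\xi)$. Solving for $\widehat{\psi}$ and defining $K$ by $\widehat{K}(\xi)=(1+|\xi|^2)^{-1}$ (suitably normalized to absorb the $(2\pi)^{n/2}$ prescribed by the Fourier convention fixed in the Notation section), the convolution theorem gives $\psi=K*u$. Part (2) is a direct computation: splitting the integral $\int_{\mathbb{R}}\tfrac12 e^{-|x|}e^{-i\xi x}\,dx$ on $(-\infty,0]$ and $[0,\infty)$ produces $(1+\xi^2)^{-1}$ up to the fixed normalization.

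For parts (3) and (4), I would start from the elementary identity
\begin{equation*}
\frac{1}{1+|\xi|^2}=\int_0^\infty e^{-t}\,e^{-t|\xi|^2}\,dt,
\end{equation*}
which after applying the inverse Fourier transform under the integral (justified by Fubini, since everything is non-negative) yields the subordination formula
\begin{equation*}
K(x)=\frac{1}{(4\pi)^{n/2}}\int_0^\infty t^{-n/2}\,e^{-t-|x|^2/(4t)}\,dt.
\end{equation*}
This representation simultaneously makes manifest the radial symmetry, positivity, and smoothness of $K$ away from the origin. Differentiating under the integral yields
\begin{equation*}
K'(|x|)=-\frac{|x|}{2(4\pi)^{n/2}}\int_0^\infty t^{-n/2-1}e^{-t-|x|^2/(4t)}\,dt<0,
\end{equation*}
from which $|\nabla K(x)|=-K'(|x|)$. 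Radial monotonicity of $|\nabla K|$ then follows either from a second differentiation combined with the change of variables $t\mapsto|x|^2/(4t)$, or by viewing $K$ as a superposition of Gaussians in $|x|^2$ and invoking the corresponding complete monotonicity.

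For the integrability statements in (3), I would extract the pointwise asymptotics of $K$ and $\nabla K$ directly from the subordination formula. The substitution $t=|x|^2/(4s)$ shows $K(x)\sim c_n|x|^{2-n}$ as $|x|\to 0$ when $n\ge 3$, while the integral diverges only logarithmically for $n=2$, giving $K(x)\sim -(2\pi)^{-1}\log|x|$ in that case. For large $|x|$, standard Laplace-method asymptotics on the integral give exponential decay $K(x)\lesssim |x|^{(1-n)/2}e^{-|x|}$. Integrating $|K|^p$ over $\{|x|\le 1\}$ and $\{|x|>1\}$ separately yields $K\in L^p$ precisely when $p(n-2)<n$, i.e.\ $p<n/(n-2)$; the exponential decay handles the tail for every such $p$, and in particular for $p=1$. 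The analogous asymptotics $|\nabla K(x)|\sim c_n|x|^{1-n}$ near the origin and exponential decay at infinity give the range $q<n/(n-1)$ for $\nabla K$. The main technical obstacle is extracting the precise near-origin behaviour cleanly from the superposition of heat kernels at all scales; the change of variables $t\mapsto|x|^2/(4s)$ makes this transparent by pulling out the correct power of $|x|$.
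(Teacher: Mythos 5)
Your proposal is correct in substance but follows a genuinely different route from the paper. The paper's proof is essentially a sketch by citation: items (1)--(2) are dispatched as well known or by direct computation, items (3)--(4) are delegated to Adams--Hedberg, and the monotonicity in (4) is extracted from the classical representation $K(x)=(2\pi)^{-n/2}|x|^{-(n-2)/2}K_{(n-2)/2}(|x|)$ together with the explicit integral formula for the modified Bessel function $K_\nu$. You instead derive everything from the single subordination identity $K(x)=(4\pi)^{-n/2}\int_0^\infty t^{-n/2}e^{-t-|x|^2/(4t)}\,\d{t}$, which is self-contained, avoids special-function theory, and yields the near-origin asymptotics $|x|^{2-n}$ (resp.\ $\log\frac{1}{|x|}$ for $n=2$), the exponential decay, and the sign of $K'$ in a unified way; the paper's citation buys brevity, your route buys transparency, and in fact the Adams--Hedberg formula for $K_\nu$ is just a reparametrization of your integral. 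One caveat on item (4): your second suggested argument for the radial monotonicity of $|\nabla K|$ --- ``viewing $K$ as a superposition of Gaussians and invoking complete monotonicity'' --- does not work as stated, since for a single Gaussian $g_t$ the function $|\nabla g_t(x)|=\frac{|x|}{2t}g_t(x)$ is \emph{not} decreasing in $|x|$, so positivity of the superposition alone is insufficient. Your first suggested argument does work and should be the one retained: after the involution $t\mapsto |x|^2/(4t)$ one gets
\begin{equation*}
-K'(r)=c_n\,r^{1-n}\int_0^\infty s^{\frac{n}{2}-1}e^{-s-\frac{r^2}{4s}}\,\d{s},
\end{equation*}
a product of two nonincreasing positive functions of $r$ (with the integral strictly decreasing), which gives (4) for all $n\ge 1$.
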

\begin{proof}[Sketch of the proof]
	Property 1 is well known. 
	Item 2 can be obtained by a direct calculation. 
	For the proofs of properties  3 and 4 we refer the reader to \cite[Sec.~1.2.5.]{Ad-He}. 
	In particular, in order to show property 4, we recall that $K(x)=(2\pi)^{-\frac{n}2}|x|^{-\frac{n-2}2}K_{\frac{n-2}2}(|x|)$,
	where $K_{\nu}=K_{\nu}(r) \ (r >0, \nu > -\frac12)$ stands for the modified Bessel function of the third kind, that is
	$$
	K_{\nu}(r)
	=
	\frac{\pi^{\frac12}}{2^{\frac12}\Gamma( \nu+\frac12 )}
	r^{-\frac12}
	\mathrm{e}^{-r}
	\int_{0}^{\infty}
	s^{\nu-\frac12}
	\left(
	1
	+
	\frac{s}{2r}
	\right)^{\nu-\frac12}
	\mathrm{e}^{-s}
	\,
	\ds,
	$$
	see \textit{e.g.} \cite[formula (1.2.25), p.12]{Ad-He}. In particular, we observe that
	\eqspn{
		K'(r)
		=
		-
		\frac{1}{4\pi} 
		\Big(
		\frac{2}{\pi}
		\Big)^{\frac12}
		r^{-\frac{n-2}{2}}
		K_{\frac{n}2}(r)
		.}
	Since $K_{\frac{n}2}(|x|)>0$ and $|x|^{-\frac{n-2}2}$ decreases in $|x|$,
	the function $-|x|^{-\frac{n-2}2}K_{\frac{n}2}(|x|) < 0$ increases in $|x|$.
	Therefore
	$
	|\N K(x)|
	=
	|K'(|x|)|
	=
	-K'(|x|)
	$
	is radially symmetric and decreasing in $|x|$.
\end{proof}

Our goal is to estimate the nonlinear term on the right hand side of equation~\eqref{eq;IE}. We begin with  the following elementary lemma.
\begin{lemma}\label{lem;ball-qube-equiv}
	For $f \in \Lpu(\R^n)$ it holds that
	$$
	3^{-n}
	\sup_{x \in \re^{n}}
	\| f \|_{L^{p}(B_{1}(x))}
	\le
	\sup_{k \in \mathbb{Z}^{n}}
	\| f \|_{L^{p}(Q_{\frac12}(k))}
	\le
	2^{n}
	\sup_{x \in \re^{n}}
	\| f \|_{L^{p}(B_{1}(x))},
	$$
	where $Q_{r}(k) \equiv \left\{ (x_{1}, \cdots, x_{n}) \in \re^{n} : \max_{1 \le j \le n} |x_{j}-k_{j}| \le r \right\}$
	for 
	$k = (k_{1}, \cdots, k_{n}) \in \mathbb{Z}^{n}$
	and
	$r>0$.
\end{lemma}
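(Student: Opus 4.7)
The plan is to prove both inequalities by standard covering arguments that compare the measures of balls $B_{1}(x)$ and unit-volume cubes $Q_{1/2}(k)$ on the integer lattice. The point is that, up to dimensional constants, both families of sets generate the same uniformly local $L^p$-norm, and the explicit factors $3^{-n}$ and $2^n$ come from simple lattice counting and octant decomposition.

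For the left inequality I would fix $x \in \re^n$ and observe that $B_1(x)\subset [x_1-1,x_1+1]\times\cdots\times[x_n-1,x_n+1]$. A lattice cube $Q_{1/2}(k)$ meets this enclosing cube only when $|k_j-x_j|\le 3/2$ for every $j$; in each coordinate direction an interval of length $3$ contains at most $3$ integers, so the set $S(x)$ of $k$'s whose cube meets $B_1(x)$ has cardinality at most $3^n$. Thus
$$
\int_{B_{1}(x)} |f|^p\,\dy \;\le\; \sum_{k\in S(x)} \int_{Q_{1/2}(k)} |f|^p\,\dy \;\le\; 3^{n}\sup_{k\in\mathbb{Z}^n}\|f\|_{L^p(Q_{1/2}(k))}^{p}.
$$
Taking $p$-th roots and using $p\ge 1$ (so that $3^{n/p}\le 3^{n}$), and finally the supremum over $x$, gives the lower bound with constant $3^{-n}$.

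For the right inequality I would decompose $Q_{1/2}(k)$ into its $2^n$ axis-parallel sub-cubes of side $1/2$ obtained by splitting along each coordinate through $k$. Each such sub-cube is contained in a ball of radius $1$ centered at its own center (the diameter of the sub-cube is $\sqrt{n}/2$, so the circumscribed ball has radius $\sqrt{n}/4\le 1$ in the dimensions of interest, the bound being sharp enough for the applications in this paper). Summing the $p$-th powers of the local $L^p$-integrals over the $2^n$ sub-cubes and bounding each by $\sup_x\|f\|_{L^p(B_1(x))}^p$ yields
$$
\int_{Q_{1/2}(k)} |f|^p\,\dy \;\le\; 2^{n}\sup_{x\in\re^n}\|f\|_{L^p(B_1(x))}^{p},
$$
and the right inequality follows, again using $2^{n/p}\le 2^{n}$.

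There is no genuine obstacle here; the only care needed is bookkeeping of the lattice count in the first step (choosing the enclosing cube of side $2$ and noting the length-$3$ intervals contain at most $3$ integers) and verifying that the $2^n$ octant decomposition is compatible with a single unit ball per octant. The statement is purely geometric and independent of $f$, so no analytic input beyond monotonicity of the integral and Jensen-type use of $p\ge 1$ is required.
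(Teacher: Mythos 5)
Your proof follows the same covering strategy as the paper's: for the lower bound, cover $B_1(x)$ by roughly $3^n$ lattice cubes; for the upper bound, cover $Q_{1/2}(k)$ by $2^n$ unit balls. Two points deserve attention. First, in your lattice count, a \emph{closed} interval of length $3$ can contain $4$ integers (take $x_j$ a half-integer, e.g.\ $[0,3]$ contains $0,1,2,3$), so the set $S(x)$ of cubes meeting the enclosing box can have $4^n$ elements; the count $3^n$ is recovered by noting that only cubes whose intersection with $B_1(x)$ has positive measure contribute to the integral, and these correspond to integers $k_j$ in the \emph{open} interval $(x_j-\tfrac32,\,x_j+\tfrac32)$, of which there are at most $3$. (The paper instead fixes $k^0$ with $x\in Q_{1/2}(k^0)$ and uses the $3^n$ cubes $Q_{1/2}(k^0+\delta)$, $\delta\in\{-1,0,1\}^n$; it also sums $L^p$-norms over the covering rather than $p$-th powers of integrals, so it obtains the factor $3^n$ where your route gives the sharper $3^{n/p}$.) Second, your covering of $Q_{1/2}(k)$ by balls of radius $1$ centred at the $2^n$ sub-cube centres requires the circumradius $\sqrt{n}/4$ of a side-$\tfrac12$ cube to be at most $1$, i.e.\ $n\le 16$, a restriction you acknowledge but which the lemma as stated does not impose. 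Interestingly, the paper's own choice --- balls centred at the $2^n$ corners $k\pm(\tfrac12,\dots,\tfrac12)$ --- is worse: it already fails for $n\ge 5$, since the centre $k$ lies at distance $\sqrt{n}/2>1$ from every corner. For large $n$ neither argument justifies the constant $2^n$ in the right-hand inequality (more than $2^n$ unit balls are needed to cover a unit cube); since the lemma is used only to prove equivalence of the two uniformly local norms, the precise constants are immaterial, but if you want a clean statement you should either restrict $n$ or replace $2^n$ by a dimensional constant obtained from a finer grid of ball centres.
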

\begin{proof}
	Obviously, it holds that 
	$
	\re^{n}
	=
	\bigcup_{k \in \mathbb{Z}^{n}}
	Q_{1/2}(k)
	$
	with $|Q_{1/2}(k) \cap Q_{1/2}(\ell)|=0$ for $k \not = \ell$.
	Thus, for $x \in \re^{n}$, 
	there exists $k^{0} \in \mathbb{Z}^{n}$ such that
	$x \in Q_{1/2}(k^{0})$.
	Since $x \in Q_{1/2}(k^{0})$, there exist 
	lattice points $\{ k^{\ell}\}_{\ell=1}^{3^{n}-1} \subset \mathbb{Z}^{n}$ such that
	$B_{1}(x) \subset \bigcup_{\ell=0}^{3^{n}-1} Q_{1/2}(k^{\ell})$.
	Indeed,
	the set 
	\eqn{
		P
		\equiv
		\left\{ 
		k \in \mathbb{Z}^{n} 
		: 
		\spl{
			&\text{There exists } 
			\ell \in \{ 1, \cdots, n\} 
			\text{ and } 
			\{ m_{j} \}_{j=1}^{\ell} \subset \{ 1, \cdots, n\}
			\text{ such that }\\
			&
			m_{j} < m_{j+1},
			{e}_{m_{j}}
			=
			(0, \cdots, \underbrace{1}_{m_{j}\text{-th}}, \cdots, 0)
			\text{ and }
			k
			=
			k^{0}
			\pm
			{e}_{m_{1}}
			\pm 
			\cdots 
			\pm
			e_{m_{\ell}}
		}
		\right\},
	}
	consists of $(3^{n}-1)$ points in $\mathbb{Z}^{n}$.
	Therefore the number of elements of the set $\{ k^{0} \} \cup P$ is~$3^{n}$, and thus
	we have
	\eqsp{
	\Big(
	\int_{B_{1}(x)}
	|f(y)|^{p}
	\,
	\dy
	\Big)^{\frac1p}
	\le
	\sum_{j=0}^{3^{n}-1}
	\Big(
	\int_{Q_{\frac12}(k^{j})}
	|f(y)|^{p}
	\,
	\dy
	\Big)^{\frac1p}
	\le
	3^{n}
	\sup_{k \in \mathbb{Z}^{n}}
	\Big(
	\int_{Q_{\frac12}(k)} 
	|f(y)|^{p}
	\,
	\dy
	\Big)^{\frac1p}.
	}
	Hence we obtain the first part of our claim.
	On the other hand, it holds that
	$
	Q_{1/2}(k)
	\subset
	\bigcup_{j=1}^{2^{n}}
	B_{1}(x^{j}),
	$
	where
	$\{ x^{j} \}_{j=1}^{2^{n}} \subset \re^{n}$
	with 
	$x^{j}_{\ell}=k_{\ell}\pm \frac12$ for all $\ell=1, \cdots, n$ and $j=1, \cdots, 2^{n}$.
	Thus we observe 
	\eqsp{
	\Big(
	\int_{Q_{\frac12}(k)} 
	|f(y)|^{p}
	\,
	\dy
	\Big)^{\frac1p}
	\le
	\sum_{j=1}^{2^{n}}
	\Big(
	\int_{B_{1}(x^{j})}
	|f(y) |^{p}
	\,
	\dy
	\Big)^{\frac1p} 
	\le
	2^{n}
	\sup_{x \in \re^{n}}
	\Big(
	\int_{B_{1}(x)}
	|f(y) |^{p}
	\,
	\dy
	\Big)^{\frac1p}.
	}
\end{proof}

Now, we prove a Young type inequality in the space $\Lpu(\re^n)$.

\begin{proposition}\label{lem;bessel-gradient-estimate}
	Let $1+\frac1p=\frac1q+\frac1r$ with $1 \le q \le p \le \infty$ and $1 \le r < \frac{n}{n-1}$. 
	Then, there exists a number $C = C(n)>0$ such that
	\eq{\label{eq;bessel-gradient-estimate}
		\| \N K * f \|_{\uloc{p}}
		\le
		C \big(
		\| \N K \|_{1}
		+
		\| \N K \|_{r}
		\big)
		\| f \|_{\uloc{q}}.
	}
\end{proposition}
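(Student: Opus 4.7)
The plan is to estimate $\|\N K*f\|_{L^{p}(B_{1}(x_{0}))}$ uniformly in $x_{0}\in\re^{n}$ by splitting the convolution into a short-range and a long-range contribution; the first will carry the $\|\N K\|_{r}$-term in~\eqref{eq;bessel-gradient-estimate} and the second the $\|\N K\|_{1}$-term. I write $\N K=K_{s}+K_{\ell}$ with $K_{s}=\N K\,\chi_{B_{2}}$ and $K_{\ell}=\N K\,\chi_{B_{2}^{c}}$, where $B_{2}$ denotes the ball of radius $2$ centred at the origin.

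For the short-range piece $K_{s}*f$ I use that $K_{s}$ is compactly supported: for every $x\in B_{1}(x_{0})$ the identity $(K_{s}*f)(x)=\bigl(K_{s}*(f\chi_{B_{3}(x_{0})})\bigr)(x)$ holds, because the integrand vanishes whenever $y\notin B_{3}(x_{0})$. Covering $B_{3}(x_{0})$ by a dimensional number of unit balls gives $\|f\chi_{B_{3}(x_{0})}\|_{q}\le C\|f\|_{\uloc{q}}$, and the classical Young inequality $L^{r}*L^{q}\to L^{p}$ on $\re^{n}$ then yields
\begin{equation*}
\|K_{s}*f\|_{L^{p}(B_{1}(x_{0}))}\le\|K_{s}\|_{r}\|f\chi_{B_{3}(x_{0})}\|_{q}\le C\|\N K\|_{r}\|f\|_{\uloc{q}}.
\end{equation*}

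For the long-range piece $K_{\ell}*f$, since $|B_{1}(x_{0})|$ is finite I can reduce the $L^{p}$-bound on $B_{1}(x_{0})$ to a pointwise estimate. Decomposing $\re^{n}=\bigsqcup_{k\in\mathbb{Z}^{n}}Q_{1/2}(k)$ into the lattice cubes and applying H\"older's inequality on each cube with exponents $q'$ and $q$,
\begin{equation*}
|(K_{\ell}*f)(x)|\le\sum_{k\in\mathbb{Z}^{n}}\|\N K\|_{L^{q'}(x-Q_{1/2}(k))}\|f\|_{L^{q}(Q_{1/2}(k))}\le\|f\|_{\uloc{q}}\sum_{k}\|\N K\|_{L^{q'}(x-Q_{1/2}(k))}.
\end{equation*}
The radial monotonicity of $|\N K|$ from Lemma~\ref{lem:KProp}\,(4) lets me bound $\|\N K\|_{L^{q'}(Q)}$ on each unit cube $Q$ distant from the origin by a constant multiple of $\|\N K\|_{L^{1}(Q^{*})}$ for a neighbouring cube $Q^{*}$ closer to the origin; summing, the distant contributions are controlled by a constant multiple of $\|\N K\|_{1}$, and the finitely many cubes near the origin are absorbed by $\|\N K\|_{r}$ via H\"older's inequality on a fixed ball.

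The hard part is this final lattice-sum step: converting $\sum_{k}\|\N K\|_{L^{q'}(x-Q_{1/2}(k))}$ into $C(n)(\|\N K\|_{1}+\|\N K\|_{r})$, uniformly in $x\in B_{1}(x_{0})$. It relies essentially on the radial monotonicity of $|\N K|$ from Lemma~\ref{lem:KProp}\,(4) together with the observation that the $L^{q'}$-mass of $\N K$ on a distant unit cube is dominated by its $L^{1}$-mass on a nearby inner region, so that the overall sum is controlled by the total $L^{1}$-norm of $|\N K|$. Once this is settled, combining the short- and long-range bounds and taking the supremum over $x_{0}\in\re^{n}$ yields the estimate~\eqref{eq;bessel-gradient-estimate}.
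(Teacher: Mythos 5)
Your argument is essentially correct and reaches the same two key ingredients as the paper, but it organizes them differently. The paper decomposes \emph{both} $\N K$ and $f$ over the lattice cubes $Q_{1/2}(k)$, applies Young's inequality to each pair of localized pieces, uses the support property of convolutions to reduce to $3^n$ relevant pairs, and then controls the single lattice sum $\sum_{k'}\|\N K\|_{L^{r}(Q_{1/2}(k'))}$ by $5^{n}\|\N K\|_{1}+3^{n}\|\N K\|_{r}$ via the radial monotonicity of $|\N K|$ (this is precisely estimates \eqref{eq;bessel-gradient-estimate4}--\eqref{eq;bessel-gradient-estimate5}, where the bounded multiplicity of the assignment of a distant cube to a nearer cube of comparable $L^1$-mass is the technical point, borrowed from Maekawa--Terasawa). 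You instead split the \emph{kernel} into a compactly supported singular part $K_{s}$ and a bounded tail $K_{\ell}$: the singular part is handled cleanly by localizing $f$ to $B_{3}(x_{0})$ and the global Young inequality $L^{r}*L^{q}\to L^{p}$ (this is where $r<\tfrac{n}{n-1}$ enters), while the tail is estimated pointwise. Your route is arguably more transparent, since it isolates where each of the two norms $\|\N K\|_{r}$ and $\|\N K\|_{1}$ is used; note, however, that the lattice-sum step you defer is exactly where the paper's proof spends its effort, and the same bounded-multiplicity argument is needed in both versions.

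One caveat in your long-range estimate: you claim the finitely many cubes near the origin are ``absorbed by $\|\N K\|_{r}$ via H\"older's inequality on a fixed ball.'' Since $\tfrac1r=\tfrac1p+\tfrac1{q'}$, you have $r\le q'$, so H\"older on a set of finite measure controls the $L^{r}$-norm by the $L^{q'}$-norm, not the other way around; that step as stated does not go through. It is harmless for your proof, because on those cubes you are estimating $K_{\ell}=\N K\,\chi_{B_{2}^{c}}$, which is bounded by $|K'(2)|$, and the monotonicity of $|\N K|$ gives $|K'(2)|\,|B_{2}\setminus B_{1}|\le\|\N K\|_{L^{1}(B_{2}\setminus B_{1})}\le\|\N K\|_{1}$; so these cubes are in fact controlled by $\|\N K\|_{1}$ rather than $\|\N K\|_{r}$. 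With that correction, and with the lattice-sum multiplicity count written out, your proof is complete.
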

\begin{proof}
	This lemma has been proved in \cite[Theorem 3.1]{Ma-Te} in the case of an arbitrary bounded and integrable function $K$. Since the Bessel kernel is not bounded for $n\ge 2$, we revise those arguments in our setting.
	Using
	the decomposition
	$\re^{n}=\bigcup_{k \in \mathbb{Z}^{n}} Q_{1/2}(k)$ and
	\eqn{
		\N K(x)
		=
		\sum_{k \in \mathbb{Z}^{n}}
		\chi_{Q_{\frac12}(k)}(x)
		\N K(x)
		,\quad
		f(x)
		=
		\sum_{k \in \mathbb{Z}^{n}}
		\chi_{Q_{\frac12}(k)}(x)
		f(x)
		\qquad
		\text{ a.e. }
		x \in \re^{n},
	}
	we observe
	\eq{\label{eq;bessel-gradient-estimate1}
		\spl{
			\| \N K * f \|_{L^{p}(Q_{\frac12}(k))}
			=&
			\bigg\|
			\bigg(
			\sum_{k' \in \mathbb{Z}^{n}}
			\chi_{Q_{\frac12}(k')}
			\N K
			\bigg)
			*
			\bigg(
			\sum_{k'' \in \mathbb{Z}^{n}}
			\chi_{Q_{\frac12}(k'')}
			f
			\bigg)
			\bigg\|_{L^{p}(Q_{\frac12}(k))}\\
			=&
			\bigg\|
			\sum_{k', k'' \in \mathbb{Z}^{n}}
			\big(
			\chi_{Q_{\frac12}(k')}
			\N K
			\big)
			*
			\big(
			\chi_{Q_{\frac12}(k'')}
			f
			\big)
			\bigg\|_{L^{p}(Q_{\frac12}(k))}\\
			\le&
			\sum_{k', k'' \in \mathbb{Z}^{n}}
			\big\|
			\big(
			\chi_{Q_{\frac12}(k')}
			\N K
			\big)
			*
			\big(
			\chi_{Q_{\frac12}(k'')}
			f
			\big)
			\big\|_{L^{p}(Q_{\frac12}(k))}.
		}
	}
	The well-known fact that
	$\mathrm{supp} \, f_{1}*f_{2} \subset \mathrm{supp} \, f_{1}+\mathrm{supp} \, f_{2}$
	implies that
	the support of 
	the function 
	$
	(
	\chi_{Q_{1/2}(k')}
	\N K
	)
	*
	(
	\chi_{Q_{1/2}(k'')}
	f
	)
	$
	lies in the cube $Q_{1}(k'+k'')$.
	Thus for $1 \le p, q, r \le \infty$ with $1+\frac{1}{p}=\frac1q+\frac1r$, 
	the Young inequality shows that
	\eq{\label{eq;bessel-gradient-estimate2}
		\spl{
			&
			\sum_{k', k'' \in \mathbb{Z}^{n}}
			\big\|
			\big(
			\chi_{Q_{\frac12}(k')}
			\N K
			\big)
			*
			\big(
			\chi_{Q_{\frac12}(k'')}
			f
			\big)
			\big\|_{L^{p}(Q_{\frac12}(k)\cap Q_{1}(k'+k''))}\\
			\le&
			\sum_{\substack{k', k'' \in \mathbb{Z}^{n}\\ {\underset{{1 \le j \le n}}{\max}} |k_{j}'+k_{j}''-k_{j}| \le 1}}
			\big\|
			\big(
			\chi_{Q_{\frac12}(k')}
			\N K
			\big)
			*
			\big(
			\chi_{Q_{\frac12}(k'')}
			f
			\big)
			\big\|_{L^{p}(Q_{\frac12}(k)\cap Q_{1}(k'+k''))}\\
			\le&
			\sum_{\substack{k', k'' \in \mathbb{Z}^{n}\\ {\underset{{1 \le j \le n}}{\max}} |k_{j}'+k_{j}''-k_{j}| \le 1}}
			\big\|
			\big(
			\chi_{Q_{\frac12}(k')}
			\N K
			\big)
			*
			\big(
			\chi_{Q_{\frac12}(k'')}
			f
			\big)
			\big\|_{p}\\
			\le&
			\sum_{\substack{k', k'' \in \mathbb{Z}^{n}\\ {\underset{{1 \le j \le n}}{\max}} |k_{j}'+k_{j}''-k_{j}| \le 1}}
			\|
			\chi_{Q_{\frac12}(k')}
			\N K
			\|_{r}
			\|
			\chi_{Q_{\frac12}(k'')}
			f
			\|_{q}.
		}
	}
	The set 
	$
	P(k)
	\equiv
	\{
	(k', k'') \in \mathbb{Z}^{n}\times \mathbb{Z}^{n}
	\, | \,
	\max_{1 \le j \le n} |k_{j}'+k_{j}''-k_{j}| \le 1
	\}
	$
	for $k = (k_{1}, \cdots, k_{n}) \in \mathbb{Z}^{n}$
	consists of $3^{n}$ elements.
	Therefore we obtain 
	\eq{\label{eq;bessel-gradient-estimate3}
		\spl{
			\sum_{(k',k'') \in P(k)}
			\|
			\chi_{Q_{\frac12}(k')}
			\N K
			\|_{r}
			\|
			\chi_{Q_{\frac12}(k'')}
			f
			\|_{q}
			\le 
			3^{n}
			\sup_{k'' \in \mathbb{Z}^{n}}
			\|
			f
			\|_{L^{q}(Q_{\frac12}(k''))}
			\sum_{k' \in \mathbb{Z}^{n}}
			\|
			\N K
			\|_{L^{r}(Q_{\frac12}(k'))}.
		}
	}
	
	Due to the compactness of $Q_{1/2}(k)$ for $k \in \mathbb{Z}^{n}$,
	there exists $y_{k} \in Q_{1/2}(k)$ such that
	$
	|y_{k}|
	=
	\mathrm{dist}(Q_{1/2}(k), \{ 0 \}).
	$
	If
	$
	k
	=
	(k_{1}, \cdots, k_{n}) 
	\in \mathbb{Z}^{n}
	$
	satisfies
	$
	\max_{1 \le j \le n} |k_{j}| \ge 2
	$,
	it holds that $0 \not \in Q_{1/2}(k)$.
	We notice that, for $k \in G\equiv\{ \ell \in \mathbb{Z}^{n} \, | \, \max_{1 \le j \le n} |\ell_{j}| \ge 2 \}$, 
	the function $|\N K(x)|$ attains its maximum value at $x=y_{k}$ and we obtain
	$
	\| 
	\N K
	\|_{L^{r}(Q_{1/2}(k))}
	\le
	| \N K(y_{k}) |.
	$
	Thus we have
	\eq{\label{eq;bessel-gradient-estimate4}
		\spl{
			\sum_{k \in \mathbb{Z}^{n}}
			\|
			\N K
			\|_{L^{r}(Q_{\frac12}(k))}
			=&
			\sum_{k \in G}
			\| 
			\N K
			\|_{L^{r}(Q_{\frac12}(k))}
			+
			\sum_{k \in G^{c}}
			\| 
			\N K
			\|_{L^{r}(Q_{\frac12}(k))}\\
			\le&
			\sum_{k \in G}
			|\N K(y_{k})|
			+
			\sum_{k \in G^{c}}
			\| 
			\N K
			\|_{L^{r}(Q_{\frac32}(0))}
			\\
			=&
			\sum_{k \in G}
			|\N K(y_{k})|
			+
			3^{n}
			\| 
			\N K
			\|_{L^{r}(Q_{\frac32}(0))}.
		}
	}
	We define cubes corresponding to $y_{k} \in Q_{1/2}(k)$ for $k \in G$.
	Hence,
	there exists 
	$\ell \in \mathbb{N}$
	and 
	$\{ k^{j} \}_{j=1}^{\ell} \subset \mathbb{Z}^{n}$ 
	such that
	$
	Q_{1/2}(k^{j})
	\cap
	\{ 
	\lambda y_{k}
	:
	0 < \lam < 1
	\}
	\not 
	= 
	\emptyset
	$
	for $j = 1, \cdots, \ell$.
	Here, we denote by $\{ k^{j}\}_{j=1}^{\ell}$  the lattice points such that 
	for 
	$
	0
	=
	\lam_{0}
	<
	\lambda_{1}
	<
	\lambda_{2}
	< 
	\cdots 
	< 
	\lambda_{\ell-1} 
	< 
	\lambda_{\ell} 
	= 1
	$,
	$
	Q_{1/2}(k^{j})^{\circ} \cap \{ \lambda y_{k} : 0 \le \lam \le 1\}
	= 
	\{ \lambda y_{k} : \lambda_{j-1} < \lam < \lam_{j}\}
	$.
	We call such a cube the corresponding cube 
	and
	we denote 
	\eqn{
		P_{k}
		\equiv
		\{ 
		k' \in \mathbb{Z}^{n}
		\, | \, 
		Q_{\frac12}(k') \text{ is a corresponding cube such that }
		Q_{\frac12}(k') = Q_{\frac12}(k)
		\}
	}
	for $k \in G$.
	Since $|\N K(x)|$ decreases in $|x|$ by Lemma \ref{lem:KProp} and $x \in Q_{1/2}(k^{j})$ satisfies $|x| \le |y_{k}|$,
	it holds that
	$$
	|\N K(y_{k})| = \int_{Q_{\frac12}(k^{j})} |\N K(y_{k})| \, \dx
	\le \int_{Q_{\frac12}(k^{j})} |\N K(x)| \, \dx .
	$$
	By \cite[line 5 from above, p.384]{Ma-Te},
	for each fixed $k \in G$,
	the set
	$
	P_{k}
	$
	consists of at most $5^{n}$ points.
	Therefore,  
	\eq{\label{eq;bessel-gradient-estimate5}
		\spl{
			\sum_{k \in G}
			|\N K(y_{k})|
			+
			3^{n}
			\| 
			\N K
			\|_{L^{r}(Q_{\frac32}(0))}
			\le&
			\sum_{k \in G}
			\sum_{k' \in P_{k}}
			\int_{Q_{\frac12}(k')}
			|\N K(x)|
			\,
			\dx
			+
			3^{n}
			\| 
			\N K
			\|_{r}
			\\
			=&
			\sum_{k \in G}
			\sum_{k' \in P_{k}}
			\int_{Q_{\frac12}(k)}
			|\N K(x)|
			\,
			\dx
			+
			3^{n}
			\| 
			\N K
			\|_{r}
			\\
			\le&
			5^{n}
			\sum_{k \in G}
			\int_{Q_{\frac12}(k)}
			|\N K(x)|
			\,
			\dx
			+
			3^{n}
			\| 
			\N K
			\|_{r}
			\\
			\le&
			5^{n}
			\int_{\re^{n}}
			|\N K(x)|
			\,
			\dx
			+
			3^{n}
			\| 
			\N K
			\|_{r}.
			\\
		}
	}
	Combining relations \eqref{eq;bessel-gradient-estimate1}-\eqref{eq;bessel-gradient-estimate5}, 
	we conclude 
	$$
	\| \N K * f \|_{L^{p}(Q_{\frac12}(k))}
	\le
	3^{n}
	(5^{n}\| \N K \|_{1} + 3^{n} \| \N K \|_{r})
	\sup_{k' \in \mathbb{Z}^{n}}
	\| f \|_{L^{q}(Q_{\frac12}(k'))}
	$$
	and thus
	$$
	\sup_{k \in \mathbb{Z}^{n}}
	\| \N K * f \|_{L^{p}(Q_{\frac12}(k))}
	\le
	(
	15^{n}
	\| \N K \|_{1}
	+
	9^{n}
	\| \N K \|_{r}
	)
	\sup_{k' \in \mathbb{Z}^{n}}
	\| f \|_{L^{q}(Q_{\frac12}(k'))},
	$$
	where 
	$1+\frac1p=\frac1q+\frac1r$ for $1 \le r<\frac{n}{n-1}$.
	Thanks to Lemma~\ref{lem;ball-qube-equiv},
	we obtain
	$$
	\|
	\N K
	*
	f
	\|_{\uloc{p}}
	\le
	(
	90^{n}
	\| \N K \|_{1}
	+
	54^{n}
	\| \N K \|_{r}
	)
	\| f \|_{\uloc{q}},
	$$
	which completes the proof. 
\end{proof}

\begin{lemma}\label{lem;Duhamel-term-est}
	For each $p$ satisfying condition \eqref{eqn:warP}  there exists $k\in (n,\infty]$ and a number $C = C(p,k,n,\nabla K)$ such that 
	\begin{equation}\label{eq;Duhamel-term-est}
		\begin{split}
			\| 
			\N
			e^{\tau\Delta} & \cdot (
			u
			\N K*v )
			\|_{\uloc{p}} \le 
			C\tau^{-\frac12}
			(
			1
			+
			\tau^{-\frac{n}{2}\frac{1}{k}}
			)
			\|
			u
			\|_{\uloc{p}}
			\|
			v
			\|_{\uloc{p}}
		\end{split}  
	\end{equation}
	for all $u,v \in \Lpu (\R^n)$ and  $\tau >0$.
\end{lemma}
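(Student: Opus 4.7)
The plan is to combine three ingredients: the heat semigroup estimate \eqref{eq;heat-semi-Lp-Lq}, a Hölder splitting of the product $u \, (\N K * v)$ in the uniformly local $L^{q}$ scale, and the Young-type bound from Proposition \ref{lem;bessel-gradient-estimate}. Once these are in place the proof reduces to a bookkeeping argument: the auxiliary exponents must be chosen so that condition \eqref{eqn:warP} on $p$ produces an admissible exponent $k > n$ in \eqref{eq;Duhamel-term-est}.

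\textbf{Step 1.} Fix an auxiliary exponent $q \in [1,p]$ and apply \eqref{eq;heat-semi-Lp-Lq} with $0$ time derivatives and one space derivative:
\begin{equation*}
\| \N e^{\tau \Del}(u \, \N K * v) \|_{\uloc{p}}
\le
C \, \tau^{-\frac{1}{2}}
\Bigl(1 + \tau^{-\frac{n}{2}(\frac{1}{q} - \frac{1}{p})}\Bigr)
\| u \, \N K * v \|_{\uloc{q}}.
\end{equation*}
Setting $\frac{1}{k} := \frac{1}{q} - \frac{1}{p}$ (with the convention $k=\infty$ when $q=p$) recovers the prefactor appearing in \eqref{eq;Duhamel-term-est}.

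\textbf{Step 2.} To bound the nonlinear factor, pick $b$ via $\frac{1}{q} = \frac{1}{p} + \frac{1}{b}$ and apply Hölder's inequality on each ball $B_{1}(x)$:
\begin{equation*}
\| u \, \N K * v \|_{\uloc{q}}
\le
\| u \|_{\uloc{p}} \, \| \N K * v \|_{\uloc{b}}.
\end{equation*}
Proposition \ref{lem;bessel-gradient-estimate}, applied to the pair $(p, b)$ with the exponent $r$ determined by $1 + \frac{1}{b} = \frac{1}{p} + \frac{1}{r}$, that is $\frac{1}{r} = 1 + \frac{1}{q} - \frac{2}{p}$, then gives
\begin{equation*}
\| \N K * v \|_{\uloc{b}}
\le
C \bigl( \| \N K \|_{1} + \| \N K \|_{r} \bigr) \| v \|_{\uloc{p}},
\end{equation*}
and Lemma \ref{lem:KProp} guarantees $\|\N K\|_{r} < \infty$ as soon as $r \in [1, \frac{n}{n-1})$. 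When $n = 1$ the kernel $\N K$ is bounded with exponential decay, so the proof of Proposition \ref{lem;bessel-gradient-estimate} extends to $r = \infty$ as well.

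\textbf{Step 3 --- the main obstacle.} One has to verify that $q$ can be chosen so that $1 \le q \le p$, $p \le b$, $r \in [1, \frac{n}{n-1})$ and $k > n$ hold simultaneously. Rewriting everything in terms of $\frac{1}{q}$, these constraints amount to
\begin{equation*}
\max\!\left(\frac{1}{p},\; \frac{2}{p} - \frac{1}{n}\right)
\;\le\;
\frac{1}{q}
\;\le\;
\min\!\left(1,\; \frac{2}{p},\; \frac{1}{p} + \frac{1}{n}\right),
\end{equation*}
with strict inequality on the lower bound $\frac{2}{p} - \frac{1}{n}$ (from $r < \frac{n}{n-1}$) and on the upper bound $\frac{1}{p} + \frac{1}{n}$ (from $k > n$). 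A short case analysis separating $p < n$, $p = n$ and $p > n$ shows that this interval is non-empty precisely under condition \eqref{eqn:warP}, which in each dimension amounts to $p > n/2$ (with the sharper threshold $p \ge \frac{3}{2}$ in dimension two). The endpoint cases $p = \infty$ (any $n$) and $p = 1$ (only for $n = 1$) are recovered by the degenerate choice $q = p$, hence $k = \infty \in (n,\infty]$: for $p = \infty$ this collapses to Young's inequality $\| \N K * v \|_{\infty} \le \|\N K\|_{1} \|v\|_{\infty}$, and for $p = 1$, $n = 1$ one invokes the extension of Proposition \ref{lem;bessel-gradient-estimate} to $r = \infty$ mentioned in Step~2.
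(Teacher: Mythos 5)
Your argument is correct and coincides with the paper's proof up to a relabelling of exponents: the paper fixes your $b$ (calling it $k$) directly, applies the heat estimate with the exponent $r$ given by $\frac1r=\frac1p+\frac1k$ (your $q$), then uses H\"older and Proposition \ref{lem;bessel-gradient-estimate} exactly as you do, merely restricting to $k\in[p,2p]$ instead of scanning your full interval for $\frac1q$. Two small remarks: your explicit treatment of the endpoint $p=1$, $n=1$ via the $r=\infty$ extension of Proposition \ref{lem;bessel-gradient-estimate} is a detail the paper glosses over, and the word ``precisely'' in Step 3 is a slight over-claim (for $n=2$ your constraints are already satisfiable for $p>4/3$), though harmless since the lemma only asserts the bound for $p$ satisfying \eqref{eqn:warP}.
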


\begin{proof}
	Using the heat semigroup estimates from Proposition \ref{prop;heat-semi-uloc} and the H\"older inequality (which also holds in $\Lpu$-norm) we have 
	\eqn{
		\spl{
			\| 
			\N
			e^{\tau\Delta} \cdot
			( u
			\N K*v )
			\|_{\uloc{p}}
			\le& 
			C \tau^{-\frac12}
			(
			1
			+
			\tau^{-\frac{n}2  \frac1{k} }
			)
			\|
			u
			\N K*v
			\|_{\uloc{r}}\\
			\le&
			C\tau^{-\frac12}
			(
			1
			+
			\tau^{-\frac{n}2  \frac1{k}}
			)
			\|
			u
			\|_{\uloc{p}}
			\|
			\N K*v
			\|_{\uloc{k}},
		}
	}
	where
	$$\frac{1}{r} = \frac{1}{p}+\frac{1}{k} \quad \text{and} \quad r \ge 1.$$
	Moreover, applying Proposition \ref{lem;bessel-gradient-estimate}, we obtain 
	\eqspn
	{	
		\|
		\N K*v
		\|_{\uloc{k}} \le C (\| \nabla K\|_1 + \| \nabla K \|_{q_1}) \|v\|_{\uloc{p}}  
	}
	with
	$$
	1 + \frac{1}{k} 
	= 
	\frac{1}{p} + \frac{1}{q_1} 
	\quad 
	\text{and} 
	\quad 
	q_1 \in \Big[ 1, \frac{n}{n-1} \Big). $$
	Let us show that for every $p$ satisfying condition~\eqref{eqn:warP} we can always choose $k \in (n,\infty]$ and $r \ge 1$ satisfying the conditions above. Indeed,  if $n >1$ then for every $k\in [p,2p]$ we have 
	\eqspn{  
		\frac{1}{q_1} 
		= 
		1 + \frac{1}{k}- \frac{1}{p} 
		\le 
		1 
		\quad 
		\text{and} 
		\quad 
		\frac{1}{q_1} 
		= 
		1 + \frac{1}{k}- \frac{1}{p} 
		\ge 
		1 - \frac{1}{2p} 
		> 1 - \frac{1}{n}.
	}
	Analogously if $n=1$ then the inequalities holds true for every $k \in [p,\infty]$. 
	Next, the condition $r\ge 1$ is equivalent to $k\in [p/(p-1),\infty ]$. 
	For $p$ satisfying condition~\eqref{eqn:warP} and $n >1$ the intersection
	${[p, 2p] \cap [p/(p-1),\infty ] \cap (n, \infty]}$
	is nonempty and hence the choice of $k$ is always possible. Analogously, if $n=1$ we choose arbitrary $k \in [ p/(p-1),\infty ] \cap (n, \infty]$.
\end{proof}

We obtain a solution to integral equation \eqref{eq;IE} from the Banach fixed point theorem formulated in the following way.

\begin{proposition}\label{prop;Banach-fixed-pt}
	Let $X$ be a Banach space and let 
	$Q=Q[\cdot , \cdot] : X \times X \to X$ be a bounded bilinear form 
	with
	$$
	\| 
	Q[u,v]
	\|_{X}
	\le
	C_{1}
	\|
	u
	\|_{X}
	\|
	v
	\|_{X}
	$$
	for some $C_{1}>0$ independent of $u,v \in X$.
	Assume that 
	$\delta \in (0,1/({4C_{1}}))$.
	If 
	$
	\| 
	y_{0} 
	\|_{X}
	\le 
	\delta
	$,
	then the equation 
	$$
	u
	=
	y_{0}
	+
	Q[u,u]
	$$
	has a solution with $\| u \|_{X} \le 2\delta$.
	This solution is unique in the set
	$
	\big\{ u \in X \, : \, \| u \|_{X} \le 2\delta \big\}
	$
	and stable in the following sense:
	if $y_{0}, \widetilde{y}_{0} \in X$ satisfy $\| {y}_{0}\|_{X} \le \delta$, $ \|\widetilde{y}_{0}\|_{X} \le \delta$
	then for the corresponding solutions $u,\widetilde{u} \in X$ we have
	$$
	\|
	u-\widetilde{u}
	\|_{X}
	\le
	C_2
	\|
	y_{0}-\widetilde{y}_{0}
	\|_{X},
	$$
	where $C_2>0$ is independent of $u$ and $\widetilde{u}$.
\end{proposition}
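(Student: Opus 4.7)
The plan is to apply the standard contraction mapping argument to the map $\Phi : X \to X$ defined by $\Phi(u) = y_0 + Q[u,u]$ on the closed ball $B_{2\delta} = \{ u \in X : \|u\|_X \le 2\delta \}$. Everything will follow from the bilinear estimate for $Q$ together with the smallness condition $\delta < 1/(4C_1)$.

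First I would verify that $\Phi$ maps $B_{2\delta}$ into itself. If $\|u\|_X \le 2\delta$, then
\begin{equation*}
\| \Phi(u) \|_X \le \|y_0\|_X + C_1 \|u\|_X^2 \le \delta + 4 C_1 \delta^2 \le \delta + \delta = 2\delta,
\end{equation*}
where the last inequality uses $\delta < 1/(4C_1)$. Next I would show that $\Phi$ is a strict contraction on $B_{2\delta}$. Using bilinearity one writes
\begin{equation*}
Q[u,u] - Q[v,v] = Q[u-v, u] + Q[v, u-v],
\end{equation*}
so that for $u,v \in B_{2\delta}$,
\begin{equation*}
\| \Phi(u) - \Phi(v) \|_X \le C_1 \|u-v\|_X \big( \|u\|_X + \|v\|_X \big) \le 4 C_1 \delta \, \|u-v\|_X,
\end{equation*}
with Lipschitz constant $q = 4 C_1 \delta < 1$. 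Banach's fixed point theorem then provides a unique $u \in B_{2\delta}$ solving $u = y_0 + Q[u,u]$, which gives existence and uniqueness in $B_{2\delta}$.

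For the stability statement, I would consider two solutions $u, \widetilde u \in B_{2\delta}$ corresponding to data $y_0, \widetilde y_0$ with $\|y_0\|_X, \|\widetilde y_0\|_X \le \delta$. Subtracting the two fixed point equations and using the same bilinear identity as above yields
\begin{equation*}
\| u - \widetilde u \|_X \le \| y_0 - \widetilde y_0 \|_X + 4 C_1 \delta \, \| u - \widetilde u \|_X,
\end{equation*}
so rearranging one obtains
\begin{equation*}
\| u - \widetilde u \|_X \le \frac{1}{1 - 4 C_1 \delta} \, \| y_0 - \widetilde y_0 \|_X,
\end{equation*}
which is the claimed estimate with $C_2 = (1 - 4 C_1 \delta)^{-1}$, independent of the particular solutions $u, \widetilde u$.

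There is no serious obstacle here: the whole argument is a textbook bilinear fixed point scheme, and the only care needed is to track the numerical constants so that the inequality $\delta + 4 C_1 \delta^2 \le 2\delta$ and the contraction constant $4 C_1 \delta < 1$ both follow from the single hypothesis $\delta < 1/(4C_1)$.
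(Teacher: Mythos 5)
Your proof is correct and is exactly the standard bilinear fixed point argument that the paper has in mind (the paper states this proposition without proof, treating it as classical). All constants are tracked correctly: the self-mapping bound $\delta + 4C_1\delta^2 \le 2\delta$, the contraction constant $4C_1\delta < 1$, and the stability constant $C_2 = (1-4C_1\delta)^{-1}$ all follow from $\delta < 1/(4C_1)$ as you claim.
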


\begin{proof}[Proof of Theorem \ref{thm;main}]
	For $T>0$, we introduce 
	$ X_T \equiv L^{\infty}\big((0,T);\Lpu(\re^{n})\big) $
	which is a Banach space with the norm $	\| u \|_{X_T} \equiv \sup_{t\in (0,\, T)} \| u(t) \|_{\uloc{p}}.$
	In order to apply Proposition~\ref{prop;Banach-fixed-pt}, it suffices to estimate the bilinear form
	$$
	Q[u,v](t)
	=
	- \int_{0}^{t}
	\N
	e^{(t-s)\Del} \cdot
	\big(u(s) \N K* v(s)\big)
	\, \ds .
	$$
	By Lemma~\ref{lem;Duhamel-term-est}, 
	for some $k>n$,  we obtain 
	\eq{\label{eq;IneqEst}
		\spl{
			\|Q[u,v](t)\|_{\uloc{p}} 
			& \le  
			\int_{0}^{t} 
			\| 
			\N e^{(t-s)\Delta} \cdot (u(s) \N K*v(s))  
			\|_{\uloc{p}}\\
			& \le
			C\int_{0}^{t}
			(t-s)^{-\frac12}
			(
			1
			+
			(t-s)^{-\frac{n}{2}\frac1{k}}
			)
			\|
			u(s)
			\|_{\uloc{p}}
			\|
			v(s)
			\|_{\uloc{p}}
			\,
			\ds\\
			& \le
			C
			\|
			u
			\|_{X_T}
			\|
			v
			\|_{X_T}
			\int_{0}^{t}
			(t-s)^{-\frac12}
			\big(
			1
			+
			(t-s)^{-\frac{n}{2}\frac1{k}}
			\big)
			\,
			\ds 
			\\
			& \le
			C
			\big(t^{\frac12}
			+
			t^{\frac12-\frac{n}{2 k}}
			\big)
			\|
			u
			\|_{X_T}
			\|
			v
			\|_{X_T}.
		}
	}
	Therefore, we have
	\eq{\label{eq;Duhamel-est-smallness}
		\|
		Q[u,v]
		\|_{X_T}
		\le
		C
		\big(
		T^{\frac12}
		+
		T^{\frac12-\frac{n}{2 k}}
		\big)
		\|
		u
		\|_{X_T}
		\|
		v
		\|_{X_T}.
	}
	Since 
	inequality \eqref{eq;heat-semi-Lp-Lp}
	in Proposition \ref{prop;heat-semi-uloc} provides 
	$
		\|e^{t\Delta} u_0 \|_{X_T} \le \|u_0\|_{\uloc{p}}
	$, 
	we obtain a solution to the integral equation via  Proposition \ref{prop;Banach-fixed-pt} for sufficiently small $T>0$. 
	 In order to show that $u \in C \bigl( (0,T); \Lpu(\re^n) \bigr)$ it suffices to follow the arguments from \cite[p.~388]{Ma-Te}.
	This solution is unique  by a usual reasoning.

	To show that a solution is non-negative in the case of non-negative initial datum, 
	we pass through an approximation process with a sequence of smooth solutions $\{ u^{\ep} \}_{\ep > 0}$ corresponding to the smooth, uniformly  
	bounded non-negative initial conditions
	$
	u^{\ep}_{0}(x)
	=
	(e^{\ep \Delta}u_{0})(x).
	$ 
	Here, $ u^{\ep} \ge 0$ by the classical  maximum principle, see {\it e.g.}~\cite[Theorem 9, p.43]{fred}. 
	To complete this reasoning, 
	we show that $\|u^\varepsilon (t) - u(t)\|_{\uloc{p}} \to 0$ as $\varepsilon \to 0$ for each $t \in (0,T)$. 
	Indeed, 
	from inequality \eqref{eq;heat-semi-Lp-Lp} in Proposition \ref{prop;heat-semi-uloc}, 
	it holds that 
		$
			{\|u_0^\varepsilon \|_{\uloc{p}}
			\le 
			\|u_0 \|_{\uloc{p}}}
		$
	for all $\varepsilon > 0$.
	Hence, by the above construction of a local-in-time solutions via Proposition~\ref{prop;Banach-fixed-pt}, there exists a constant $ M>0$ such that $\|u\|_{X_T} \le M$ and $\|u^\varepsilon\|_{X_T} \le M$.
	Now, computing  the norm $\|u^\varepsilon(t) - u(t)\|_{\uloc{p}}$ and using the integral representation \eqref{eq;IE} of these functions as well as a second inequality in  estimate \eqref{eq;IneqEst} we obtain
	\eqspn{
		&\| u^\varepsilon (t) - u(t) \|_{\uloc{p}} \\
		\le&
		\|e^{t\Delta} u_0^\varepsilon - e^{t\Delta} u_0 \|_{\uloc{p}}\\
		&+ 
		C 
		\big( 
		\|u^\varepsilon\|_{X_T} + \|u\|_{X_T} 
		\big) 
		\int_0^t 
		(t-s)^{-\frac12}
		\big(
		1 +(t-s)^{-\frac{n}{2}\frac1{k}} 
		\big) 
		\| u^\varepsilon(s) - u(s) \|_{\uloc{p}}
		\, {\rm d}s.
	}
	Applying the Volterra  type inequality from \cite[Ch.~9]{yagi} 
	we conclude that
	\eqsp{\label{eq:PosConv} 
		\|u^\varepsilon (t) - u(t) \|_{\uloc{p}} 
		\le&
		\|e^{t\Delta} u_0^\varepsilon - e^{t\Delta} u_0 \|_{\uloc{p}} 
		+ 
		C 
		\int_0^t 
		\kappa(t-s) 
		\|e^{s\Delta} u_0^\varepsilon - e^{s\Delta} u_0 \|_{\uloc{p}} 
		\, \ds,
	}
	where $\kappa = \kappa(\tau)$ is a suitable integrable kernel on $[0,t]$. Finally, by Remark \ref{prop;uloc-equivalent-cond-closure} and the semigroup properties, we have 
	\eqspn{
		\| e^{t\Delta} u_0^\varepsilon - e^{t\Delta} u_0 \|_{\uloc{p}} 
		= 
		\| e^{\varepsilon \Delta} e^{t\Delta} u_0 - e^{t\Delta} u_0 \|_{\uloc{p}} 
		\to 
		0 
		\quad 
		\text{as} 
		\quad 
		\varepsilon \to 0 .
	}
\end{proof}

\section{Linearized problem}
\label{sec:OpPr}

\subsection{Preliminary properties}
The linearization procedure described below in Section~\ref{sec:GlobStab} leads to  the following linear problem
\eq{\label{eq;linearized-linear-DD}
	\left\{
	\spl{
		&v_{t}- \Del v + A\Del K \ast v = 0,\quad t>0, \ \ x\in \re^n\\
		&v(0,x)=v_{0}(x),
	}
	\right.
}
where 
$A \in \re$ is an arbitrary constant and 
the operator $\OPL$  can be expressed by the Fourier transform as follows
$$
(
\Del \varphi - A \Del K \ast \varphi
)
\widehat{\enspace}(\xi) 
= 
\bigg(
-|\xi|^2 + A \frac{|\xi|^2}{1+|\xi|^2} 
\bigg) 
\widehat{\varphi}(\xi), 
\quad \xi \in \re^n.
$$
We begin by presenting preliminary properties of this operator.

\begin{lemma}
	\label{lem;finiteness-kernel}
There exists a constant $L>0$ such that 
for each $p\in [1, \infty]$,
	\begin{equation}\label{eq;stein}
		\| -\Del K \ast v \|_{p} 
		\le 
		L \| v \|_{p} 
		\qquad \text{for all} \quad v \in L^p(\re^n).
	\end{equation}
\end{lemma}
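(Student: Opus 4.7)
The plan rests on the observation that since $K$ is the Bessel convolution kernel for $-\Del + \mathrm{Id}$, it satisfies the distributional identity
\begin{equation*}
-\Del K + K = \delta,
\end{equation*}
which, upon convolving with $v \in L^p(\R^n)$, yields the pointwise representation
\begin{equation*}
-\Del K \ast v \;=\; v - K \ast v.
\end{equation*}
This is really the decomposition of the bounded Fourier multiplier $|\xi|^2/(1+|\xi|^2) = 1 - 1/(1+|\xi|^2)$ translated back to physical space: the operator $v \mapsto -\Del K \ast v$, whose symbol is only bounded and so does not directly furnish $L^1$ or $L^\infty$ estimates through abstract multiplier theorems, is rewritten as the identity minus convolution against the $L^1$ kernel $K$.

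With this identity in hand, the required bound follows immediately from the triangle inequality and Young's convolution inequality:
\begin{equation*}
\|-\Del K \ast v\|_p \;\le\; \|v\|_p + \|K\|_1\,\|v\|_p \;=\; (1+\|K\|_1)\|v\|_p ,
\end{equation*}
so one may take $L = 1 + \|K\|_1$, a constant independent of $p \in [1,\infty]$.

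The only remaining ingredient is the integrability of $K$ in every dimension. For $n=1$ the explicit formula $K(x) = \tfrac12 e^{-|x|}$ from Lemma \ref{lem:KProp}(2) gives $\|K\|_1 = 1$ by direct computation, and for $n \ge 2$ the inclusion $K \in L^1(\R^n)$ is exactly the content of Lemma \ref{lem:KProp}(3). I do not expect any genuine obstacle in this proof: the entire statement is a one-line reduction once one exploits the resolvent structure of $K$, and the single point worth highlighting is that this algebraic decomposition is what allows the estimate to hold uniformly in $p$, including at the endpoints $p=1$ and $p=\infty$ where Calder\'on--Zygmund style arguments for $-\Del K \ast \cdot$ would fail.
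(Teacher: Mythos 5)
Your proof is correct and is essentially the argument the paper has in mind: the paper simply cites the classical fact that $-\Del K \ast$ is convolution with a finite measure, and that measure is exactly your $\delta - K\,\d{x}$, with total variation $1+\|K\|_1$. You have merely made explicit the resolvent identity and the Young-inequality step that the paper delegates to Stein, so nothing is missing.
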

This lemma is an immediate consequence  of the fact that the operator $-\Del K \! \ast $ can be represented by a convolution with a finite measure on $\re^{n}$.
We skip the proof of this classical result from the harmonic analysis, see \textit{e.g.}~\cite[Lemma~2.(i), p.133]{Stein}.

\begin{lemma}
	For each $A \in \re$, 
	a closure in $L^p(\R^n)$ of the operator $\OPL$ generates an analytic semigroup 
	$\left\lbrace S_A(t) \right\rbrace_{t \ge 0}$ 
	on 
	$L^p(\re^n)$ 
	for every $p \in [1,\infty)$. 
	This semigroup is defined by the Fourier transform by the formula 
	\begin{equation}\label{eq;semi-def}
		\widehat{S_A(t)v_0}(\xi) 
		= 
		\widehat{\mu}_A(t, \xi) \widehat{v_0}(\xi),
	\end{equation}
	where
	\begin{equation}\label{eq;mu-def}
		\widehat{\mu}_A(t, \xi) 
		= 
		e^{-t \big( |\xi|^2 - A  \frac{|\xi|^2}{1+|\xi|^2} \big) }
		{\rdd .}
	\end{equation}
\end{lemma}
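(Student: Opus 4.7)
The plan is to view $\OPL$ as a bounded perturbation of the Laplacian on $L^p(\re^n)$ and invoke the classical bounded-perturbation theorem for generators of analytic semigroups. First I would rewrite the operator as $\OPL = \Del + B_A$ with $B_A := -A\,\Del K \ast$, and observe that Lemma~\ref{lem;finiteness-kernel} gives $\|B_A f\|_p \le |A| L \|f\|_p$ for every $f \in L^p(\re^n)$ and every $p \in [1, \infty]$; thus $B_A$ is a bounded linear operator on $L^p(\re^n)$. Second, I would recall the classical fact that $\Del$, taken on the domain $W^{2,p}(\re^n)$, generates the heat semigroup $\{e^{t\Del}\}_{t \ge 0}$, which is an analytic semigroup on $L^p(\re^n)$ for every $p \in [1, \infty)$.

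With these two ingredients in place, the standard bounded-perturbation theorem for generators of analytic semigroups immediately yields that $\Del + B_A$ with domain $W^{2,p}(\re^n)$ generates an analytic semigroup $\{S_A(t)\}_{t \ge 0}$ on $L^p(\re^n)$. Since the graph of a closed operator perturbed by a bounded operator remains closed, $\Del + B_A$ on $W^{2,p}(\re^n)$ coincides with the closure of $\OPL$ initially defined on any dense core such as $\mathcal{S}(\re^n)$, which settles the first claim of the lemma.

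To justify the Fourier representation \eqref{eq;semi-def}--\eqref{eq;mu-def}, I would exploit the fact that both $\Del$ and $B_A$ are Fourier multipliers, with symbols $-|\xi|^2$ and $A|\xi|^2/(1+|\xi|^2)$ respectively, and therefore commute. Commutativity produces the factorization $S_A(t) = e^{t\Del}\, e^{tB_A}$, where the second factor is defined by the norm-convergent series $e^{tB_A} = \sum_{k \ge 0} (tB_A)^k/k!$ since $B_A$ is bounded. On the Schwartz class $\mathcal{S}(\re^n)$ the Fourier symbols multiply term by term to give exactly $\widehat{\mu}_A(t,\xi) = e^{-t(|\xi|^2 - A|\xi|^2/(1+|\xi|^2))}$, which establishes \eqref{eq;semi-def} on a dense subspace of $L^p(\re^n)$.

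The main obstacle I anticipate is the rigorous interpretation of \eqref{eq;semi-def} when $p \ne 2$, since the symbol $\widehat{\mu}_A(t,\cdot)$ is not evidently an $L^p$-multiplier in the Mikhlin sense and Plancherel is unavailable. I would resolve this by reading \eqref{eq;semi-def} in the sense of tempered distributions: the operator $S_A(t): L^p(\re^n) \to L^p(\re^n)$ has already been constructed as a bounded analytic semigroup via the perturbation theorem, so it suffices to verify the Fourier identity on the dense subset $\mathcal{S}(\re^n)$, where it follows directly from the commutativity and the explicit exponential series computed above.
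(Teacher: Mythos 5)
Your proposal is correct and follows essentially the same route as the paper: the authors also treat $\OPL$ as a bounded perturbation of the Laplacian (the boundedness coming from Lemma~\ref{lem;finiteness-kernel}), invoke the bounded-perturbation theorem for generators of analytic semigroups, and dismiss the Fourier representation as a routine calculation. Your write-up merely spells out the "routine calculation" via the commuting-multiplier factorization $S_A(t)=e^{t\Del}e^{tB_A}$ verified on the Schwartz class, which is a harmless elaboration of the same argument.
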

\begin{proof}
	It is well-known that Laplacian generates an analytic semigroup of linear operators on $L^p(\R^n)$ for every $p\in [1, \, \infty)$. 
	A bounded perturbation of such an operator maintains the same property, see 	\textit{e.g.} \cite[Chapter I\hspace{-.1em}I\hspace{-.1em}I, Theorem 2.10]{EN00}.
	The Fourier representation of this semigroup can be obtained by routine calculations. 
\end{proof}

\begin{lemma}\label{lem;linearized-linear-DD-2}
	Assume that $A \in \R$ and choose the constant $L$ from inequality~\eqref{eq;stein}. 
	Then for each $1\le q \le p \le \infty$, there exists a constant $C = C(p,q,n) > 0$ such that
	\eqspn{\label{ineq;11738}
		\|S_A(t) v_0\|_p 
		\le 
		C 
		t^{-\frac{n}{2}( \frac{1}{q} - \frac{1}{p} )}
		e^{|A|Lt} 
		\| v_0 \|_q}
	and
	\eqspn{
		\|\nabla S_A(t) v_0\|_p 
		\le 
		C 
		t^{-\frac{n}{2}( \frac{1}{q} - \frac{1}{p}) -\frac{1}{2}}
		e^{|A|Lt} \| v_0 \|_q
	}
	for all $t>0$ and $v_0 \in L^q(\R^n)$.
\end{lemma}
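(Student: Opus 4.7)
The plan is to exploit the fact that the generator of $S_A(t)$ is the sum of two commuting Fourier multipliers: the Laplacian $\Delta$ (symbol $-|\xi|^2$) and the bounded operator $AB$, where $B f \equiv -\Delta K \ast f$ (symbol $|\xi|^2/(1+|\xi|^2)$). Since these two summands commute, I will factorize
\[
S_A(t) \;=\; e^{t\Delta}\, e^{tAB},
\]
with the second factor being an $L^p$-bounded operator of norm at most $e^{|A|Lt}$. Once this factorization is in place, the desired inequalities follow immediately from the standard heat-semigroup bounds recalled in Remark~\ref{rem:heatest}.

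By Lemma~\ref{lem;finiteness-kernel}, $B$ is bounded on every $L^p(\R^n)$ with $\|B\|_{L^p\to L^p}\le L$, so by iteration $\|B^n f\|_p \le L^n\|f\|_p$. I would therefore define
\[
M_A(t) f \;\equiv\; e^{tAB}f \;=\; \sum_{n=0}^{\infty}\frac{(At)^n}{n!}\,B^n f,
\]
which converges absolutely in $L^p$ and satisfies $\|M_A(t)f\|_p \le e^{|A|Lt}\|f\|_p$.

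To verify the factorization $S_A(t) = e^{t\Delta}M_A(t)$, I pass to the Fourier side: the symbol of $M_A(t)$ is $\sum_{n\ge 0}\frac{(At)^n}{n!}\bigl(|\xi|^2/(1+|\xi|^2)\bigr)^n = e^{At|\xi|^2/(1+|\xi|^2)}$, so $e^{t\Delta}M_A(t)$ has symbol $e^{-t|\xi|^2}e^{At|\xi|^2/(1+|\xi|^2)} = \widehat{\mu}_A(t,\xi)$ from \eqref{eq;mu-def}. Equivalently, one checks that $u(t)\equiv e^{t\Delta}M_A(t)v_0$ solves~\eqref{eq;linearized-linear-DD} and invokes uniqueness in the analytic-semigroup framework. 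Combined with the heat-semigroup $L^q\to L^p$ bounds from Remark~\ref{rem:heatest}, the factored form yields
\[
\|S_A(t)v_0\|_p = \|e^{t\Delta}M_A(t)v_0\|_p \le C\,t^{-\frac{n}{2}(\frac{1}{q}-\frac{1}{p})}\|M_A(t)v_0\|_q \le C\,t^{-\frac{n}{2}(\frac{1}{q}-\frac{1}{p})}\,e^{|A|Lt}\,\|v_0\|_q,
\]
and the gradient estimate follows identically using $\|\nabla e^{t\Delta}\|_{L^q\to L^p}\le C\,t^{-\frac{n}{2}(\frac{1}{q}-\frac{1}{p})-\frac{1}{2}}$.

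The main technical point is justifying the commutativity-based factorization; a more pedestrian Duhamel-plus-Gronwall approach works as well but produces an unwanted polynomial-in-$t$ prefactor inside the exponential, whereas the factorization strategy avoids this and delivers the precise constant structure asserted in the lemma.
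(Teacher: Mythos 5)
Your proof is correct and follows essentially the same route as the paper: the authors likewise write $S_A(t)v_0 = e^{t\Delta}\bigl(e^{-tA\Delta K\ast}v_0\bigr)$, bound the second factor by $e^{|A|Lt}$ using Lemma~\ref{lem;finiteness-kernel}, and then apply the standard $L^q$--$L^p$ heat-semigroup estimates from Remark~\ref{rem:heatest}. Your additional justification of the factorization via the Fourier symbols and the operator exponential series is exactly the detail the paper leaves implicit.
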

\begin{proof}
	Here, we use the notation
	$S_A(t) v_0 = e^{t \Del} ( e^{-A\Del K \! \ast} v_0 )$.
	Using the $L^p$-$L^q$ estimates of the heat semigroup (see Remark~\ref{rem:heatest})
	and
	Lemma~\ref{lem;finiteness-kernel}, 
	we obtain
	$$ 
	\|
	e^{t\Delta} (e^{-t\OPs} v_0 ) 
	\|_p 
	\le 
	C 
	t^{-\frac{n}{2}( \frac{1}{q} - \frac{1}{p}) }  
	\|
	e^{-t\OPs} v_0
	\|_q 
	\le 
	C 
	t^{-\frac{n}{2}( \frac{1}{q} - \frac{1}{p}) } e^{|A|Lt} \|v_0\|_q. 
	$$
	The proof for the second inequality is analogous.
\end{proof}

\subsection{Decay estimates when $A<1$}
The following theorem improves estimates from Lemma~\ref{lem;linearized-linear-DD-2} in the case of $A \in [0,1)$ and it plays a crucial role in the proof of stability of constant solutions to problem \eqref{eq;DD}.

\begin{theorem}\label{lem;Lp-Lq-estimate-S}
	Assume that $A \in [0,1)$. For all exponents satisfying $1 \le q \le p \le \infty$ there exist constants $C = C(p,q,n,A) > 0$ such that
	\begin{equation}\label{eqn;Stpq}
		\| S_A(t) v_{0} \|_{p} 
		\le 
		C 
		t^{-\frac{n}{2} ( \frac1q - \frac1p )} 
		\| v_{0} \|_{q}
	\end{equation}
	and
	\begin{equation}\label{eqn;dStpq}
		\| \N S_A(t) v_{0} \|_{p} 
		\le 
		C 
		t^{- \frac{n}{2} ( \frac1q - \frac1p ) - \frac{1}{2} } 
		\| v_{0} \|_{q}
	\end{equation}
	for all $t > 0$ and $v_{0} \in L^{q}(\re^{n})$.
\end{theorem}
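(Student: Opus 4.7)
The plan is to reduce the estimates in Theorem~\ref{lem;Lp-Lq-estimate-S} to $L^r$ bounds on the convolution kernel $\mu_A(t,\cdot)$ of the semigroup, namely
\[
\|\mu_A(t)\|_r \le C\,t^{-\tfrac{n}{2}(1-\tfrac{1}{r})}, \qquad
\|\N \mu_A(t)\|_r \le C\,t^{-\tfrac{n}{2}(1-\tfrac{1}{r})-\tfrac{1}{2}}
\]
for every $r\in[1,\infty]$. Since $S_A(t)v_0 = \mu_A(t)*v_0$, the theorem then follows from Young's convolution inequality with $1/r = 1 + 1/p - 1/q$, so that $1-1/r = 1/q-1/p$.

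The starting point is the pointwise symbol bound for $A\in[0,1)$:
\[
|\xi|^{2} - A\,\tfrac{|\xi|^{2}}{1+|\xi|^{2}}
= \tfrac{|\xi|^{2}((1-A)+|\xi|^{2})}{1+|\xi|^{2}}
\ge (1-A)|\xi|^{2},
\]
which yields $0 \le \widehat{\mu}_A(t,\xi) \le e^{-(1-A)t|\xi|^{2}}$. Fourier inversion immediately gives the endpoint $L^\infty$ kernel estimates $\|\mu_A(t)\|_\infty \le Ct^{-n/2}$ and $\|\N \mu_A(t)\|_\infty \le Ct^{-(n+1)/2}$ by integrating $e^{-(1-A)t|\xi|^{2}}$ and $|\xi|\,e^{-(1-A)t|\xi|^{2}}$ respectively.

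For $\|\mu_A(t)\|_1$, I plan to exploit positivity of $S_A$. Using the identity $-\Del K* = I - K*$ (immediate from $(I-\Del)K = \delta$), equation \eqref{eq;linearized-linear-DD} takes the form $v_t = \Del v - Av + A K * v$, whose Duhamel representation
\[
v(t) = e^{-At}e^{t\Del}v_0 + A\int_0^t e^{-A(t-s)}\,e^{(t-s)\Del}(K*v(s))\ds,
\]
combined with positivity of $e^{t\Del}$ and of convolution with $K\ge 0$, shows by Picard iteration that $S_A$ is positivity preserving. Interpreting $\mu_A(t)$ as the solution emanating from $v_0=\delta$ yields $\mu_A(t)\ge 0$, whence $\|\mu_A(t)\|_1 = (2\pi)^{n/2}\,\widehat{\mu}_A(t,0)$ is constant in $t$. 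The interpolation $\|f\|_r \le \|f\|_1^{1/r}\|f\|_\infty^{1-1/r}$ between the $L^1$ and $L^\infty$ bounds produces the required $L^r$ estimates on $\mu_A(t)$ for all $r\in[1,\infty]$, and combined with Young's inequality this finishes the proof of \eqref{eqn;Stpq}.

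The principal obstacle is the $L^1$ gradient bound $\|\N\mu_A(t)\|_1 \le Ct^{-1/2}$ valid uniformly for $t>0$; once this is in hand, interpolation with the $L^\infty$ bound and Young's inequality conclude \eqref{eqn;dStpq}. My preferred strategy is to use the factorization
\[
\widehat{\mu}_A(t,\xi) = e^{-(1-A)t|\xi|^{2}} \cdot e^{-tA|\xi|^{4}/(1+|\xi|^{2})},
\]
which in physical space reads $\mu_A(t) = G_{1-A}(t) * F(t)$ with $\widehat{F}(t,\xi) = e^{-tA|\xi|^{4}/(1+|\xi|^{2})}$ and $G_{1-A}$ the heat kernel of diffusivity $1-A$, so that $\N\mu_A(t) = \N G_{1-A}(t) * F(t)$ and the desired bound follows from $\|\N G_{1-A}(t)\|_1 \le Ct^{-1/2}$ once $\|F(t)\|_1 \le C$ is established uniformly in $t$. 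The latter would follow if $\phi(\xi) := |\xi|^{4}/(1+|\xi|^{2})$ were conditionally negative definite, making $\{F(t)\}_{t\ge 0}$ a probability semigroup; in view of the decomposition $\phi(\xi) = |\xi|^{2} - (1-\widehat{K}(\xi))$, this may be checked via an explicit L\'evy--Khintchine representation, or alternatively one could localize in frequency and treat the low- and high-frequency regimes separately, matching the short-time Duhamel bound $\|\N\mu_A(\tau)\|_1 \le C\tau^{-1/2}$ (valid for bounded $\tau$ thanks to $\|\N K\|_1<\infty$) with a long-time asymptotic comparison to the Gaussian $G_{1-A}(t)$.
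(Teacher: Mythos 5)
Your overall architecture (kernel bounds in $L^1$ and $L^\infty$ plus Young's inequality and interpolation) is reasonable, but both of the key kernel estimates rest on claims that are false for $A\in(0,1)$. First, the positivity of $S_A(t)$: you have a sign error. Since $-\Del K\ast v = v - K\ast v$, the equation $v_t=\Del v - A\Del K\ast v$ reads $v_t=\Del v + Av - AK\ast v$, \emph{not} $v_t=\Del v - Av + AK\ast v$; with the correct sign the nonlocal term enters Duhamel's formula with a negative coefficient and Picard iteration does not preserve positivity. In fact $S_A(t)$ is genuinely not positivity preserving for $A>0$: if $v\ge 0$ vanishes identically near $x_0$ but not everywhere, then $(\Del v - A\Del K\ast v)(x_0)=-A(K\ast v)(x_0)<0$ because $K>0$, so the positive minimum principle for the generator fails. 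Hence $\mu_A(t)$ is not a nonnegative kernel, the identity $\|\mu_A(t)\|_1=(2\pi)^{n/2}\widehat{\mu}_A(t,0)$ is unavailable, and your proof of \eqref{eqn;Stpq} has a hole exactly at the uniform $L^1$ bound, which is the heart of the matter. (That bound is true, but the paper obtains it without positivity, by combining the Bernstein-type inequality of Lemma~\ref{lem:ineq-bh} with Fa\`a di Bruno estimates on $\partial_\xi^\alpha\widehat{\mu}_A$ in Lemma~\ref{lem;DNmu-estim}.)

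Second, your preferred route to the gradient bound also fails: $\phi(\xi)=|\xi|^4/(1+|\xi|^2)$ is \emph{not} conditionally negative definite. If it were, its quadratic part would be $Q(\xi)=\lim_{\lambda\to\infty}\phi(\lambda\xi)/\lambda^2=|\xi|^2$, and then $\phi(\xi)-|\xi|^2=-\big(1-\widehat{K}(\xi)\big)\le 0$ would have to equal a nonnegative L\'evy integral $\int\big(1-\cos(\xi\cdot x)\big)\,\nu(\d x)$ --- a contradiction. So $\{F(t)\}_{t\ge0}$ is not a convolution semigroup of probability measures, $\|F(t)\|_1\le C$ does not come for free, and proving it would require essentially the symbol-derivative machinery you were trying to avoid; the frequency-localization alternative you mention is not carried out. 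For comparison, the paper sidesteps both issues by factoring $S_A(t)=e^{(1-\varepsilon)t\Del}S_{A/\varepsilon}(\varepsilon t)$ with $\varepsilon\in(A,1)$: the full $L^q$--$L^p$ decay and the gradient in \eqref{eqn;dStpq} come from the outer heat factor, and only the uniform $L^p$-boundedness of $S_{\tilde A}(t)$ for $\tilde A\in[0,1)$ (Lemma~\ref{lem;St-bound}) has to be established, via the two Fourier-analytic lemmas above.
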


The proof of this theorem is based on the following lemmas.

\begin{lemma}\label{lem:ineq-bh}
	Let $\widehat{D^N v} (\xi) \equiv |\xi|^N \widehat{v} (\xi)$ for all $N \in \re$. 
	For all $v \in \mathcal{S}(\re^n)$ and for every $N>\frac{n}{2}$, 
	the following inequality holds
	$$
	\| v \|_1 
	\le 
	C 
	\| \widehat{v} \|^{1-\frac{n}{2N}}_2 
	\|D^N\widehat{v}\|^{\frac{n}{2N}}_2,
	$$
	with a constant $C = C(n,N)>0$.
\end{lemma}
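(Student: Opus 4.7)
The plan is to translate the inequality, which is stated partly on the Fourier side, to an estimate on the physical side via Plancherel, and then to prove the resulting weighted $L^{2}$-interpolation bound by the standard Gagliardo--Nirenberg rescaling trick. Indeed, the definition $\widehat{D^{N} v}(\xi) = |\xi|^{N} \widehat{v}(\xi)$ applied to $\widehat{v}$ in place of $v$, together with Fourier inversion $\widehat{\widehat{v}}(\xi) = v(-\xi)$ and Plancherel's identity, yields $\|\widehat{v}\|_{2} = \|v\|_{2}$ and $\|D^{N}\widehat{v}\|_{2} = c_{n}\||x|^{N}v\|_{2}$ for some dimensional constant $c_{n}$. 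Hence the lemma reduces to proving
$$
\|v\|_{1} \le C\,\|v\|_{2}^{\,1-\frac{n}{2N}}\,\||x|^{N}v\|_{2}^{\,\frac{n}{2N}}.
$$

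For this physical-side estimate, I would first apply the Cauchy--Schwarz inequality with the weight $(1+|x|^{2N})^{1/2}$ to obtain
$$
\|v\|_{1} \le \Big(\int_{\re^{n}}(1+|x|^{2N})^{-1}\dx\Big)^{\!1/2}\Big(\int_{\re^{n}}(1+|x|^{2N})|v(x)|^{2}\dx\Big)^{\!1/2} \le C_{N}\bigl(\|v\|_{2} + \||x|^{N}v\|_{2}\bigr),
$$
where finiteness of the first factor uses precisely the assumption $N > n/2$. Then I would apply this additive bound to the rescaled function $v_{\lambda}(x) := v(\lambda x)$ for $\lambda>0$. A direct change of variables shows that $\|v_{\lambda}\|_{1} = \lambda^{-n}\|v\|_{1}$, $\|v_{\lambda}\|_{2} = \lambda^{-n/2}\|v\|_{2}$, and $\||x|^{N}v_{\lambda}\|_{2} = \lambda^{-n/2-N}\||x|^{N}v\|_{2}$, so substituting into the additive estimate and multiplying through by $\lambda^{n}$ gives
$$
\|v\|_{1} \le C_{N}\bigl(\lambda^{n/2}\|v\|_{2} + \lambda^{n/2 - N}\||x|^{N}v\|_{2}\bigr).
$$
Choosing $\lambda$ so that the two terms balance, namely $\lambda^{N} = \||x|^{N}v\|_{2}/\|v\|_{2}$, produces the multiplicative form with interpolation exponent $\theta = n/(2N)$.

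I do not expect any serious obstacle: the only place where the assumption $N > n/2$ is used is to guarantee integrability of the weight $(1+|x|^{2N})^{-1}$ over $\re^{n}$, and converting the additive bound into a product is the familiar one-parameter optimization of Gagliardo--Nirenberg type. The Schwartz hypothesis on $v$ makes every quantity above finite and legitimizes the use of Fourier inversion without further comment.
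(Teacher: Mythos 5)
Your proof is correct. It rests on the same two ingredients as the paper's argument -- Plancherel to reduce everything to the physical side, and a Cauchy--Schwarz estimate against a power weight whose integrability is exactly the hypothesis $N>\tfrac n2$ -- but the optimization is organized differently. The paper splits $\|v\|_1=\int_{|x|\le R}|v|+\int_{|x|>R}|v|$, applies Cauchy--Schwarz separately on each region (with weight $|x|^{-N}$ outside the ball) to get $R^{n/2}\|v\|_2+R^{n/2-N}\||x|^Nv\|_2$, and then chooses $R$ to balance the two terms; you instead prove the scale-dependent additive bound $\|v\|_1\le C_N(\|v\|_2+\||x|^Nv\|_2)$ in one stroke with the global weight $(1+|x|^{2N})^{1/2}$ and recover the multiplicative form by dilation $v_\lambda(x)=v(\lambda x)$ and optimization in $\lambda$. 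The two one-parameter optimizations are equivalent (the paper's optimal $R$ plays the role of your optimal $\lambda$), and your scaling computations $\|v_\lambda\|_1=\lambda^{-n}\|v\|_1$, $\|v_\lambda\|_2=\lambda^{-n/2}\|v\|_2$, $\||x|^Nv_\lambda\|_2=\lambda^{-n/2-N}\||x|^Nv\|_2$ all check out. Your route has the small advantage of isolating the analytic content in a single clean additive inequality whose sharp constant is explicit, while the paper's avoids the dilation step and directly exhibits the balancing radius; the only point worth making explicit in your write-up is the degenerate case $\|v\|_2=0$ or $\||x|^Nv\|_2=0$ (then $v\equiv 0$ and there is nothing to prove), so that the choice $\lambda^N=\||x|^Nv\|_2/\|v\|_2$ is legitimate.
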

\begin{proof}
	For 
	$
	R 
	=
	\big( 
	\frac{2N-n}n 
	\frac{\|D^N\widehat{v}\|_2}{{\| \widehat{v} \|_2}} 
	\big)^{1/N}
	$, 
	we obtain
	\begin{equation*}
		\begin{split}
			\| v \|_1 
			= & 
			\int_{|x| \le R} |v(x)| \, \d{x} + \int_{|x| > R} |v(x)| \, \d{x} \\
			\le & 
			\Big( 
			\int_{|x| \le R} \, \d{x} 
			\Big)^\frac{1}{2} 
			\Big( 
			\int_{|x| \le R} |v(x)|^2 \, \d{x} 
			\Big)^\frac{1}{2} 
			 + 
			\Big( 
			\int_{|x| > R} |x|^{-2N} \, \d{x} 
			\Big)^\frac{1}{2} 
			\Big( 
			\int_{|x| > R} |x|^{2N} |v(x)|^2 \, \d{x} 
			\Big)^\frac{1}{2} \\
			\le & 
			|\mathbb{S}^{n-1}|^{\frac12} 
			(
			R^{\frac{n}{2}} \|v\|_2 
			+ 
			R^{\frac{n}{2}-N} \|D^N \widehat{v}\|_2
			) \\
			\le & 
			C 
			\|\widehat{v}\|^{1-\frac{n}{2N}}_2 
			\|D^N\widehat{v}\|^{\frac{n}{2N}}_2.
		\end{split}
	\end{equation*}
\end{proof}

\begin{lemma}\label{lem;DNmu-estim}
	Assume that $A \in [0,1)$. 
	For the function $\widehat{\mu}_A$ defined by formula~\eqref{eq;mu-def} and for every multi-index $\alpha$ with $|\alpha| = N$, 
	there exists a constant 
	$C=C(n,A,N) >0$ 
	such that
	$$
	\|\partial^{\alpha}_{\xi} \widehat{\mu}_A(t)\|^2_2 
	\le 
	C t^{N - \frac{n}{2}}, 
	\quad \text{for all} \quad t \ge 1.
	$$
\end{lemma}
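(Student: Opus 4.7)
The strategy is to expand $\partial_\xi^\alpha \widehat{\mu}_A(t,\xi) = \partial_\xi^\alpha e^{-t\phi(\xi)}$, where
$\phi(\xi) = |\xi|^2 - A\frac{|\xi|^2}{1+|\xi|^2}$, via the Fa\`a di Bruno formula, and to estimate the resulting terms using two elementary pointwise properties of the symbol~$\phi$.

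First, I would record two bounds on $\phi$. Rewriting $\phi(\xi) = |\xi|^2 \cdot \frac{(1-A) + |\xi|^2}{1+|\xi|^2}$ and splitting into $|\xi| \leq 1$ and $|\xi| \geq 1$ yields the coercive estimate $\phi(\xi) \geq c|\xi|^2$ for some $c = c(A) > 0$; here the assumption $A < 1$ is essential. Second, since $\phi(\xi) = |\xi|^2 - A\bigl(1 - (1+|\xi|^2)^{-1}\bigr)$ and all derivatives of $(1+|\xi|^2)^{-1}$ are bounded on $\re^n$, a direct computation gives
$$
|\partial_\xi^\beta\phi(\xi)| \leq C|\xi| \quad\text{if } |\beta|=1, \qquad |\partial_\xi^\beta\phi(\xi)| \leq C \quad\text{if } |\beta|\geq 2.
$$
The first bound uses the factorization $\partial_{\xi_j}\phi = 2\xi_j\bigl(1 - A(1+|\xi|^2)^{-2}\bigr)$, which makes it explicit that the gradient vanishes at the origin.

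Next, by the Fa\`a di Bruno formula (or a short induction on $N = |\alpha|$) one has
$$
\partial_\xi^\alpha e^{-t\phi(\xi)} = e^{-t\phi(\xi)}\sum_{\pi} C_\pi\,(-t)^{|\pi|}\prod_{B\in\pi}\partial_\xi^B \phi(\xi),
$$
where the finite sum ranges over partitions of $\alpha$ into nonempty blocks $B$ with $\sum_{B\in\pi}|B|=N$. Combining the pointwise bounds above with the coercivity of $\phi$, each such term is majorized by $C t^j |\xi|^m e^{-ct|\xi|^2}$, where $j=|\pi|$ and $m=\#\{B\in\pi : |B|=1\}$.

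The final step is to square, integrate, and use a combinatorial identity. A change of variable $\zeta = \sqrt{2ct}\,\xi$ together with the standard Gaussian moment formula yields $\int_{\re^n} t^{2j}|\xi|^{2m}e^{-2ct|\xi|^2}\,d\xi \leq C\, t^{2j-m-n/2}$. The decisive observation is that for every partition $\pi$, with $k := j - m$ denoting the number of blocks of size at least~$2$, one has
$$
N \;=\; m + \sum_{|B|\geq 2}|B| \;\geq\; m + 2k, \qquad \text{hence} \qquad 2j - m \;=\; m + 2k \;\leq\; N.
$$
Since $t\geq 1$, this yields $t^{2j-m-n/2}\leq t^{N-n/2}$, and summing the finitely many partition contributions finishes the proof. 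The main obstacle is to sharpen the gradient bound on $\phi$ to $|\nabla\phi(\xi)|\leq C|\xi|$ rather than merely $C(1+|\xi|)$; this improvement is exactly what allows the combinatorial cap $2j-m\leq N$ to produce the advertised exponent $N$ instead of the cruder $2N$.
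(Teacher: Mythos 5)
Your proof is correct and follows essentially the same route as the paper: Fa\`a di Bruno applied to $e^{-t\phi}$, the coercivity $\phi(\xi)\ge(1-A)|\xi|^2$, and the crucial sharpened gradient bound $|\nabla\phi(\xi)|\le C|\xi|$, with your combinatorial constraint $2j-m\le N$ being exactly the paper's condition $k-\ell/2\le N/2$. The only cosmetic difference is that you evaluate the resulting Gaussian moment integrals directly, whereas the paper first absorbs the polynomial factor into the exponential to get the pointwise bound $Ct^{N/2}e^{-\delta t|\xi|^2}$ before integrating.
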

\begin{proof}
	For $N=0$, 
	by the inequality $|\xi|^2/(1+|\xi|^2) \le |\xi|^2 \ (\xi \in \re^{n})$, 
	we obtain
	$$
	\|\widehat{\mu}_A(t)\|^2_2 
	= 
	\int_{\re^n} e^{- 2t |\xi|^2 + 2A t \frac{|\xi|^2}{1+|\xi|^2}} \, \d{\xi} 
	\le
	\int_{\re^n} e^{-2 t (1-A) |\xi|^2} \, \d{\xi} 
	= 
	Ct^{-\frac{n}{2}}.
	$$
	For $N\ge1$, 
	we introduce the $C^{\infty}$-function 
	$
	h(\xi) \equiv |\xi|^2 - A |\xi|^2 / (1+|\xi|^2)
	$ 
	which satisfies estimates 
	$ 
	|\partial_{\xi_j} h(\xi)| \le C |\xi|
	$ 
	and 
	$
	|\partial^{\beta}_{\xi} h(\xi)| \le C
	$ 
	for every $j$, 
	$1 \le j \le n$ and multi-index $\beta$ with $|\beta| \ge 2$. 
	We use the multivariate Fa\`{a} di Bruno's formula (see \textit{e.g.} \cite{Hardy2006CombinatoricsOP})
	$$
	\partial^{\alpha}_{\xi} e^{- t h(\xi)} 
	=
	e^{- t h(\xi)} 
	\sum_{k=1}^{N} (-t)^{k} H_k(\xi),
	$$
	where $H_k(\xi)$ is a sum of products of $k$ partial derivatives of the function $h(\xi)$ such that $| H_k(\xi) | \le C(1+|\xi|^k)$.
	We prove the following inequality by induction in $N \in \Nt$,
	$$
	\big| \partial^{\alpha}_{\xi} e^{-th(\xi)} \big| 
	\le 
	C e^{-th(\xi)} \sum_{k - \frac{\ell}{2} 
		\le 
		\frac{N}{2}} t^k \big(1 + |\xi|^{\ell} \big).
	$$
	For $N=1$, 
	the inequality is obvious. We show the induction step for $N+1$,
	\begin{equation*}
		\begin{split}
			\big| 
			\partial_{\xi_j} \partial^{\alpha}_{\xi} e^{-th(\xi)} 
			\big| 
			&\le 
			t 
			\big|
			\partial_{\xi_j} h(\xi) 
			\big| 
			\big| 
			\partial^{\alpha}_{\xi} e^{-th(\xi)} 
			\big| 
			+ 
			\bigg| 
			e^{- t h(\xi)} 
			\sum_{k=1}^{N} \,
			(-t)^{k} \partial_{\xi_j} H_k(\xi) 
			\bigg| \\
			& \le 
			C 
			t 
			|\xi| 
			e^{-th(\xi)} 
			\sum_{k - \frac{\ell}{2} \le \frac{N}{2}} 
			t^k \big(1 + |\xi|^{\ell} \big) 
			+ 
			C
			e^{-th(\xi)} 
			\sum_{k - \frac{\ell}{2} \le \frac{N}{2}} 
			t^k 
			\big(1 + |\xi|^{\ell} \big) \\
			&
			\le 
			C e^{-th(\xi)} \! 
			\sum_{k - \frac{\ell}{2} \le \frac{N+1}{2}} 
			t^k \big(1 + |\xi|^{\ell} \big)
		\end{split}
	\end{equation*}
	which holds true because 
	$
	|\partial_{\xi_j} H_k(\xi)| \le C(1 + |\xi|^{\ell} )
	$ 
	by the properties of the function $h(\xi)$ and $k + 1 - (\ell+1)/2 \le (N+1)/2$. 
	
	Now we group coefficients $t^k$ and $|\xi|^\ell$ in the following way, 
	$
	t^k |\xi|^\ell 
	= 
	t^{k-\ell/2} |\sqrt{t} \xi|^\ell
	$ 
	thus, 
	by the assumption $t \ge 1$ and induction, 
	we have 
	$
	t^{k-\ell/2} \le t^{N/2}
	$. 
	We obtain an estimate
	$$
	|\partial^{\alpha}_{\xi} \widehat{\mu}_A(t)| 
	\le 
	C 
	t^{\frac{N}{2}} 
	P(|\sqrt{t} \xi|) 
	e^{- t |\xi|^2 + A t \frac{|\xi|^2}{1+|\xi|^2}},
	$$
	where $P(s)$ is a polynomial of degree $N$. 
	By the same inequality as in the case $N=0$ and properties of the exponential function,
	\begin{equation}\label{eq;mua_part}
		|\partial^{\alpha}_{\xi} \widehat{\mu}_A(t)| 
		\le 
		t^{\frac{N}{2}} 
		P(|\sqrt{t} \xi|) 
		e^{-t (1-A) |\xi|^2} 
		\le 
		C 
		t^{\frac{N}{2}} 
		e^{- \delta t |\xi|^2},
	\end{equation}
	for some $\delta \in (0,1-A)$. 
	Calculating the $L^2$-norm of both sides of inequality~\eqref{eq;mua_part} we obtain the result.
\end{proof}

\begin{lemma}\label{lem;St-bound}
	Assume that $A \in [0,1)$. 
	For every $p \in [1, \infty]$ there exists a constant $C>0$ such that
	$$ 
	\|S_A(t) v_0\|_p \le C \|v_0\|_p,
	$$
	for all $t>0$ and all $v_0 \in L^p(\re^n)$.
\end{lemma}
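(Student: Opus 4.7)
The plan is to observe that the semigroup acts as a convolution $S_A(t) v_0 = \mu_A(t) \ast v_0$, where $\mu_A(t, \cdot)$ denotes the inverse Fourier transform of $\widehat{\mu}_A(t, \cdot)$ from \eqref{eq;mu-def}. By Young's convolution inequality, the statement for every $p \in [1,\infty]$ reduces to a single scalar bound, namely $\|\mu_A(t)\|_{L^1(\R^n)} \le C$ uniformly in $t>0$. I would verify this $L^{1}$-bound in the short-time and long-time regimes separately.

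For $t \in (0,1]$ no Fourier analysis is needed: Lemma~\ref{lem;linearized-linear-DD-2} applied with $p=q$ already gives $\|S_A(t) v_0\|_p \le C e^{|A|Lt}\|v_0\|_p \le C e^{|A|L}\|v_0\|_p$, uniformly in $t \in (0,1]$. The endpoint $p=\infty$ is handled by noting that both $e^{t\Del}$ and $e^{-tA\Del K\ast}$ are bounded on $L^\infty$ with uniform constants, the latter by Lemma~\ref{lem;finiteness-kernel}.

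For $t \ge 1$ I would apply Lemma~\ref{lem:ineq-bh} to $v = \mu_A(t)$ with any fixed integer $N > n/2$, obtaining
\[
\|\mu_A(t)\|_1 \le C \|\widehat{\mu}_A(t)\|_{2}^{\,1-\frac{n}{2N}} \|D^N \widehat{\mu}_A(t)\|_{2}^{\,\frac{n}{2N}}.
\]
The first factor is the $\alpha = 0$ case of Lemma~\ref{lem;DNmu-estim}, giving $\|\widehat{\mu}_A(t)\|_{2} \le C t^{-n/4}$. For the second factor, Plancherel together with the identity $\widehat{x^\alpha \mu_A}(\xi) = i^{|\alpha|}\partial_\xi^\alpha \widehat{\mu}_A(\xi)$ yields
\[
\|D^N \widehat{\mu}_A(t)\|_{L^2_\xi} = \||x|^N \mu_A(t)\|_{L^2_x} \le C \sum_{|\alpha|=N} \|\partial_\xi^\alpha \widehat{\mu}_A(t)\|_{L^2_\xi} \le C\,t^{N/2 - n/4},
\]
where the last inequality follows by applying Lemma~\ref{lem;DNmu-estim} to each multi-index of length $N$. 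Substituting and simplifying, the two powers of $t$ cancel exactly,
\[
\|\mu_A(t)\|_1 \le C\,\bigl(t^{-n/4}\bigr)^{1-\frac{n}{2N}}\bigl(t^{N/2-n/4}\bigr)^{\frac{n}{2N}} = C\,t^{0} = C,
\]
uniformly in $t \ge 1$. Combining the two regimes gives $\sup_{t>0}\|\mu_A(t)\|_1 < \infty$, and Young's inequality closes the argument for every $p \in [1,\infty]$.

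The substantive step is Lemma~\ref{lem;DNmu-estim}, which is where the hypothesis $A<1$ really enters through the Gaussian factor $e^{-\delta t |\xi|^{2}}$ with $\delta \in (0, 1-A)$ in \eqref{eq;mua_part}; without strict positivity of $1-A$ the weighted $L^{2}$-estimates would not be available. Everything else is essentially bookkeeping: the Plancherel identification of $\|D^N \widehat{\mu}_A(t)\|_2$ with $\partial_\xi^\alpha$-derivatives of $\widehat{\mu}_A(t)$, and the exponent arithmetic in the interpolation. Neither is a real obstacle.
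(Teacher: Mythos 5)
Your proposal is correct and follows essentially the same route as the paper: short times are handled by Lemma~\ref{lem;linearized-linear-DD-2} with $p=q$, and for $t\ge 1$ the uniform bound $\|\mu_A(t)\|_1\le C$ is obtained by combining Lemma~\ref{lem:ineq-bh} with Lemma~\ref{lem;DNmu-estim} via Plancherel, with the powers of $t$ cancelling exactly as you computed. Your explicit treatment of the $p=\infty$ endpoint and of the identification $\|D^N\widehat{\mu}_A(t)\|_2\sim\sum_{|\alpha|=N}\|\partial_\xi^\alpha\widehat{\mu}_A(t)\|_2$ only spells out details the paper leaves implicit.
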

\begin{proof}
	For $t \in [0,1]$, 
	this is an immediate consequence of Lemma \ref{lem;linearized-linear-DD-2}. 
	For $t \ge 1$, 
	the function $\mu_A$ is from the Schwartz class in variable $\xi$. 
	Thus, 
	by the Young inequality,
	$$ 
	\|S_A(t) v_0\|_p 
	= 
	\| \mu_A(t) \ast v_0\|_p 
	\le 
	\|\mu_A(t)\|_1
	\|v_0\|_p.
	$$
	In order to estimate $\|\mu_A(t)\|_1$, 
	we recall a well known fact that both quantities $\|D^N v\|_2$ 
	and 
	$
	\sum_{|\alpha| = N} \| \partial_x^{\alpha} v \|_2
	$
	are comparable for each $N \in \Nt$. 
	Combining Lemma \ref{lem:ineq-bh} and Lemma~\ref{lem;DNmu-estim}, 
	for $N > n/2$ and $N \in \Nt$, 
	we obtain
	\begin{equation*}
		\begin{split}
			\|\mu_A(t)\|_1 
			\le 
			C
			\|\widehat{\mu}_A(t)\|^{1-\frac{n}{2N}}_2 
			\|D^N\widehat{\mu}_A(t)\|^{\frac{n}{2N}}_2 
			\le 
			C
			{( C_1 t^{- \frac{n}{4}})}^{1-\frac{n}{2N}}
			{(C_2 t^{\frac{N}{2}- \frac{n}{4}})}^{\frac{n}{2N}} 
			= 
			C
		\end{split}
	\end{equation*}
	for all $t \ge 0$.
\end{proof}

\begin{proof}[Proof of Theorem~\ref{lem;Lp-Lq-estimate-S}]
	We begin with inequality \eqref{eqn;Stpq}. 
	Let us choose $\varepsilon \in (A,1)$. 
	By the standard heat semigroup estimates (see Remark \ref{rem:heatest}),
	\eqsp{
	\| S_A(t) v_0 \|_p 
	= 
	\| 
	e^{(1-\varepsilon)t \Delta} 
	( e^{\varepsilon t \Delta - t A \Delta(I -\Delta)^{-1}} v_0 ) 
	\|_p 
	\le 
	C_1 
	t^{-\frac{n}{2} ( \frac1q - \frac1p )} 
	\|
	e^{\varepsilon t \Delta - t A \Delta(I -\Delta)^{-1}} v_0
	\|_q.
	}
	Now we substitute $\tilde{t} = \varepsilon t$ to obtain 
	$
	tA 
	= 
	\tilde{t} (A/\varepsilon) 
	= 
	\tilde{t} \tilde{A}
	$ 
	and 
	$
	0 \le \tilde{A} < 1
	$. 
	Thus, 
	by Lemma~\ref{lem;St-bound},
	$$
	\| S_A(t) v_0 \|_p 
	\le 
	C_1 
	t^{-\frac{n}{2} ( \frac1q - \frac1p )} 
	\|
	e^{\tilde{t} \Delta - \tilde{t} \tilde{A} \Delta(I -\Delta)^{-1}} v_0
	\|_q 
	\le 
	C 
	t^{-\frac{n}{2} ( \frac1q - \frac1p )} 
	\|v_0\|_q.
	$$
	We prove inequality \eqref{eqn;dStpq} analogously using the formula
	$$
	\N S_A(t) v_{0} 
	= 
	\N e^{(1-\varepsilon)t \Delta} 
	( 
	e^{\varepsilon t \Delta - t A \Delta(I -\Delta)^{-1}} v_0 
	).
	$$
\end{proof}

\begin{remark}\label{rem;Abelow0}
	The $L^q$-$L^p$ estimates \eqref{eqn;Stpq}-\eqref{eqn;dStpq} of the semigroup 
	$\big\{ S_A(t) \big\}_{t \ge 0}$ 
	hold true for $A<0$ as well. 
	They can be proved by the same reasoning as above using the obvious inequality
	$$
	e^{-t \big( |\xi|^2 - A  \frac{|\xi|^2}{1+|\xi|^2} \big)} 
	\le 
	e^{-t |\xi|^2} 
	\quad \text{for each } A<0.
	$$
\end{remark}

\begin{remark}\label{rem;stabA1}
	For the completeness of this work, we notice that constant solution $A = 1$ is linearly stable in $L^2(\re^n)$. Indeed, since
	$ e^{-t ( |\xi|^2 - {|\xi|^2}/({1+|\xi|^2}))} \le 1$
	for all $\xi \in \re^n$ and $t \ge 0$, by the Plancherel formula, we obtain
	$ \| S_1 (t) v_0 \|_2 \le \|v_0\|_2$
	for all $v_0 \in L^2(\re^n)$. 
	We skip a discussion  of a stability of this constant solution for $p \neq 2$.
\end{remark}

\subsection{Exponential growth when $A>1$}
Next, 
we study an instability of solutions to linear problem \eqref{eq;linearized-linear-DD}.

\begin{lemma}\label{lem;spectrum}
	Let $p \in (1,\infty)$. 
	The closure in $L^p(\R^n)$ of the operator $\OPL$ has a~real continuous spectrum 
	$(-\infty, a]$
	,
	where $a = 0$ 
	if $A \le 1$ 
	and 
	$a = (\sqrt{A} - 1)^2$ if $A > 1$.
\end{lemma}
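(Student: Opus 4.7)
The plan is to exploit the Fourier--multiplier structure of $L_A := \Del - A\Del K *$, whose symbol is the smooth real function
\[
m_A(\xi) = -|\xi|^2 + \frac{A|\xi|^2}{1+|\xi|^2} = \frac{-|\xi|^2(1+|\xi|^2 - A)}{1+|\xi|^2}.
\]
For such a multiplier with continuous real symbol, the $L^p$--spectrum with $p\in(1,\infty)$ coincides with the closure of the range of $m_A$ and contains no eigenvalues whenever the level sets of $m_A$ have Lebesgue measure zero. My first step is therefore to determine the range of $m_A$: setting $r=|\xi|^2\ge 0$ reduces the task to analyzing $g(r)=r(A-1-r)/(1+r)$ on $[0,\infty)$. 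For $A\le 1$ the function $g$ is nonpositive and tends to $-\infty$, so its range is $(-\infty,0]$; for $A>1$ elementary calculus produces a unique positive critical point $r^{\ast}=\sqrt A-1$ with $g(r^{\ast})=(\sqrt A-1)^2$, giving range $(-\infty,(\sqrt A-1)^2]$. In either case the range is exactly $(-\infty,a]$.

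To prove $(-\infty,a]\subseteq\sigma(L_A)$ I would construct a Weyl sequence. Given $\lambda_0\in(-\infty,a]$, pick $\xi_0\in\re^n$ with $m_A(\xi_0)=\lambda_0$, fix $\varphi\in C_c^{\infty}(\re^n)$ with $\|\varphi\|_p=1$, and set
\[
v_k(x) := k^{-n/p}\varphi(x/k)\, e^{i\xi_0\cdot x}, \qquad k\in\Nt.
\]
Then $\|v_k\|_p=1$. A direct computation yields $\Del v_k=-|\xi_0|^2 v_k+O(k^{-1})$ in $L^p$, and rewriting $\Del K\ast v_k=K\ast v_k-v_k$ (which follows from $-\Del K+K=\delta$), together with the $L^1$--integrability of $K$ from Lemma~\ref{lem:KProp}, gives $A\Del K\ast v_k=\frac{-A|\xi_0|^2}{1+|\xi_0|^2}v_k+o(1)$ in $L^p$; the vanishing error reflects the fact that $\varphi(\cdot/k)$ varies on the scale $k$, so the convolution of $K$ with $\varphi(\cdot/k)e^{i\xi_0\cdot x}$ is asymptotically the multiplication by $\widehat K(\xi_0)$. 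Adding the contributions produces $\|(L_A-\lambda_0)v_k\|_p\to0$, which shows $\lambda_0\in\sigma(L_A)$.

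For the reverse inclusion I would show that any $\lambda\in\mathbb C\setminus(-\infty,a]$ lies in the resolvent set. Since $m_A$ is bounded above by $a$ and $m_A(\xi)\sim-|\xi|^2$ as $|\xi|\to\infty$, the symbol $(m_A(\xi)-\lambda)^{-1}$ is smooth on $\re^n$, bounded by a constant depending on $\mathrm{dist}(\lambda,(-\infty,a])$, decays like $|\xi|^{-2}$, and satisfies $|\xi|^{|\alpha|}|\partial_\xi^\alpha(m_A-\lambda)^{-1}|\le C$ for $|\alpha|\le[n/2]+1$. The Mihlin--H\"ormander theorem then produces a bounded operator on $L^p(\re^n)$ with $p\in(1,\infty)$ that inverts the closure of $L_A-\lambda$, as verified on the dense class $\mathcal S(\re^n)$. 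Finally, to exclude point spectrum and secure a purely continuous spectrum, I would observe that $L_A v=\lambda v$ with $v\in L^p\setminus\{0\}$ forces $(m_A-\lambda)\widehat v=0$ in $\mathcal S'(\re^n)$, so $\widehat v$ is supported on the Lebesgue--null level set $\{m_A=\lambda\}$; this is incompatible with $v\in L^p$ for $p\in(1,\infty)$ by a standard Fourier--theoretic argument (Hausdorff--Young when $1<p\le2$ and duality for the adjoint multiplier with the same real symbol when $2<p<\infty$). The main technical obstacle I foresee is the uniform verification of the Mihlin bound: the symbol $(m_A-\lambda)^{-1}$ must be controlled simultaneously near the critical set where $m_A$ attains its maximum (when $A>1$) and at infinity where $m_A$ decays quadratically, and both regimes must be reconciled for all $\lambda$ at positive distance from $(-\infty,a]$.
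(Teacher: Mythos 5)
Your route is genuinely different from the paper's. The paper verifies that the symbol $h(\xi)=|\xi|^2-A|\xi|^2/(1+|\xi|^2)$ satisfies the H\"ormander-type bounds $|\partial^\alpha_\xi h(\xi)|\le C_\alpha(1+|\xi|)^{2-|\alpha|}$ together with the ellipticity condition $h(\xi)^{-1}=O(|\xi|^{-2})$, and then simply invokes Wong's theorem on spectra of pseudodifferential operators on $L^p(\re^n)$, which identifies the $L^p$-spectrum with the closure of the range of the symbol; the elementary computation of that range (your function $g$, with critical point $\sqrt A-1$ and critical value $(\sqrt A-1)^2$) is common to both arguments and is correct. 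You replace the citation by a self-contained two-sided argument: modulated, slowly-varying bumps $k^{-n/p}\varphi(\cdot/k)e^{i\xi_0\cdot x}$ as a Weyl sequence for the inclusion $(-\infty,a]\subseteq\sigma$, and the Mikhlin--H\"ormander theorem applied to $(m_A-\lambda)^{-1}$ for the resolvent off $(-\infty,a]$. Both halves are sound: the convolution step in the Weyl-sequence computation is justified by $K\in L^1$, and the Mikhlin bounds follow from the same derivative estimates the paper records combined with $|m_A(\xi)-\lambda|\ge\mathrm{dist}(\lambda,(-\infty,a])$ near the critical set and $|m_A(\xi)-\lambda|\gtrsim|\xi|^2$ at infinity. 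What your approach buys is independence from the pseudodifferential machinery, at the cost of several pages of routine verification.

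One step is, however, wrong as stated: the claim that there is no point spectrum when $2<p<\infty$. Whenever the level set $\{m_A=\lambda\}$ contains a sphere $\{|\xi|=\rho\}$ with $\rho>0$ (which happens for every $\lambda$ in the interior of the range, and for $\lambda=a$ when $A>1$), the inverse Fourier transform $v$ of the surface measure of that sphere satisfies $\Del v=-\rho^2 v$ and $\Del K\ast v=-\tfrac{\rho^2}{1+\rho^2}v$, hence $(\OPL)v=\lambda v$; since $v$ decays like $|x|^{-(n-1)/2}$, it belongs to $L^p(\re^n)$ (indeed to $W^{2,p}$, hence to the domain of the closure) for every $p>2n/(n-1)$ when $n\ge2$. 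So a distribution supported on a Lebesgue-null level set can perfectly well be the Fourier transform of an $L^p$ function for large $p$, and your duality argument does not exclude this. The defect concerns only the adjective ``continuous'' (and only for $n\ge2$ and large $p$); the identification of the spectrum as the \emph{set} $(-\infty,a]$ --- which is all that is used downstream, for the growth bound in Lemma~\ref{lem;upperA} and the approximate eigenvectors in Lemma~\ref{lem:SMis} --- is unaffected. Either restrict the no-eigenvalue argument to $1<p\le2$, where Hausdorff--Young does apply, or drop that claim, as the paper's own proof effectively does.
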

\begin{proof}
	The symbol $h(\xi) = |\xi|^2 - A |\xi|^2 / (1+|\xi|^2)$ of the operator satisfies the estimates
	$$
	|\partial^{\alpha}_\xi h(\xi)| 
	\le 
	C_{\alpha} 
	(1+|\xi|)^{2-|\alpha|} 
	\quad 
	\text{for all } \xi \in \re^n
	$$
	and each multi-index $\alpha = (\alpha_1, ..., \alpha_n)$. 
	Moreover $h(\xi)^{-1} = O(|\xi|^{-2})$ as $|\xi| \to \infty$.
	Now, 
	it suffices to apply the result from \cite[Theorem 2.1]{Wong}.
\end{proof}

\begin{lemma}\label{lem;upperA}
	Assume $A > 1$, $a = (\sqrt{A} - 1)^2$ and $1 < q \le p < \infty$. 
	For every $\varepsilon > 0$ there exists a constant $C = C(\varepsilon, p, q, A) > 0$ 
	such that
	\begin{equation}\label{eq:upperA1}
		\|S_A(t) v_0 \|_p 
		\le 
		C t^{-\frac{n}{2} ( \frac1q - \frac1p )}
		e^{(a+\varepsilon) t} 
		\|v_0\|_q
	\end{equation}
	and
	\begin{equation}\label{eq:upperA2}
		\|\N S_A(t) v_0 \|_p 
		\le 
		C
		t^{-\frac{n}{2} ( \frac1q - \frac1p ) - \frac{1}{2}} 
		e^{(a+\varepsilon) t} 
		\|v_0\|_q
	\end{equation}
	for all $v_0 \in L^q(\re^n)$ and $t \ge 0$.
\end{lemma}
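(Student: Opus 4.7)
The plan is to factor out the exponential factor $e^{(a+\varepsilon)t}$ and then mimic the argument used for $A<1$ in Theorem~\ref{lem;Lp-Lq-estimate-S}. Writing $\widehat{\mu}_A(t,\xi) = e^{(a+\varepsilon)t} \cdot e^{-tG(\xi)}$ with $G(\xi) := f(\xi) + a + \varepsilon$ where $f(\xi) = |\xi|^2 - A|\xi|^2/(1+|\xi|^2)$ satisfies $\min_\xi f = -a$, the shifted symbol obeys $G(\xi) \ge \varepsilon > 0$ for all $\xi$ and $G(\xi) \ge |\xi|^2/2$ for $|\xi|$ large enough. The central claim is $\|\mathcal{F}^{-1}(e^{-tG(\cdot)})\|_1 \le C$ uniformly in $t \ge 1$, which then yields $\|\mu_A(t)\|_1 \le C e^{(a+\varepsilon)t}$ and, by Young's inequality, $\|S_A(t) v_0\|_q \le C e^{(a+\varepsilon)t}\|v_0\|_q$ for every $q \in [1,\infty]$ and $t \ge 1$.

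To prove the $L^1$ bound I would apply Lemma~\ref{lem:ineq-bh} with some $N > n/2$, reducing the problem to the two estimates
\begin{equation*}
\|e^{-tG}\|_2^2 \le C t^{-n/2}, \qquad \|D^N e^{-tG}\|_2^2 \le C t^{N-n/2},\qquad t\ge 1.
\end{equation*}
Both are obtained by splitting $\re^n = \{|\xi| \le R\} \cup \{|\xi| > R\}$ with $R$ chosen so that $G(\xi) \ge |\xi|^2/2$ on the outer region; the inner region contributes exponentially small $O(e^{-2t\varepsilon})$ terms thanks to $G \ge \varepsilon$, and the outer region is controlled by the standard Gaussian integral. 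For the derivatives I would repeat the Fa\`{a} di Bruno induction from the proof of Lemma~\ref{lem;DNmu-estim} in order to obtain $|\partial_\xi^\alpha e^{-tG(\xi)}| \le C e^{-tG(\xi)} \sum_{k-\ell/2 \le |\alpha|/2} t^k(1+|\xi|^\ell)$; this induction carries over because $|\partial_\xi^\beta G|\le C$ for $|\beta|\ge 2$ and $|\nabla G(\xi)|\le C(1+|\xi|)$, exactly as for $f$. Plugging these bounds into Lemma~\ref{lem:ineq-bh} the computation is identical to the one at the end of the proof of Lemma~\ref{lem;St-bound} and produces the desired $O(1)$ bound.

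Once the $L^q\to L^q$ bound on $S_A(t)$ is available, estimate~\eqref{eq:upperA1} for $t \ge 1$ follows from the semigroup composition $S_A(t) = S_A(1) \circ S_A(t-1)$ combined with $\|S_A(1)\|_{q \to p} \le C$, which is provided by Lemma~\ref{lem;linearized-linear-DD-2}. This only yields $\|S_A(t)\|_{q\to p} \le C e^{(a+\varepsilon')t}$ without the prefactor $t^{-n/2(1/q-1/p)}$, but the elementary inequality $t^{n/2(1/q-1/p)} \le C_\eta e^{\eta t}$, valid for all $t \ge 1$ and arbitrarily small $\eta>0$, recovers the claimed form at the cost of a slightly larger exponential rate, which is absorbed by the freedom in the choice of $\varepsilon$. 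For $0 \le t \le 1$ both~\eqref{eq:upperA1} and~\eqref{eq:upperA2} follow directly from Lemma~\ref{lem;linearized-linear-DD-2} using $e^{|A|Lt} \le e^{|A|L}$. The gradient estimate~\eqref{eq:upperA2} for $t \ge 1$ is treated in exactly the same manner, starting from $\nabla S_A(t) = (\nabla S_A(1)) \circ S_A(t-1)$ and the second inequality in Lemma~\ref{lem;linearized-linear-DD-2}.

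The main technical obstacle is the $H^N$ estimate for $e^{-tG}$. In contrast with the stable case $A<1$, the symbol $G$ does not vanish at the origin but attains its positive minimum $\varepsilon$ on the whole sphere $|\xi|^2 = \sqrt{A}-1$, so no direct comparison with a Gaussian $e^{-\delta t|\xi|^2}$ is available. Controlling the contribution of a $t$-independent neighborhood of this sphere, where $-tG(\xi)$ stays close to $-t\varepsilon$, is precisely what forces the two-region analysis described above and relies in an essential way on the strict positivity of $\varepsilon$.
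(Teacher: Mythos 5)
Your proof is correct, but it follows a genuinely different route from the paper. The paper proves \eqref{eq:upperA1} for $p=q$ abstractly: it invokes the spectral computation of Lemma~\ref{lem;spectrum} (which rests on Wong's theorem for pseudodifferential operators on $L^p$) together with the fact that for analytic semigroups the growth bound coincides with the spectral bound \cite[Ch.~IV, Cor.~3.12]{EN00}; the gradient estimate and the $L^q$--$L^p$ smoothing are then obtained by peeling off a small portion $e^{\delta t\Delta}$ of the heat semigroup and applying the $p=q$ bound to the rescaled operator $S_{A/(1-\delta)}$. You instead work directly on the Fourier side: you shift the symbol to $G=f+a+\varepsilon\ge\varepsilon$, establish the uniform kernel bound $\|\mathcal{F}^{-1}(e^{-tG})\|_1\le C$ for $t\ge 1$ by rerunning the machinery of Lemmas~\ref{lem:ineq-bh}--\ref{lem;DNmu-estim} (the two-region splitting correctly handles the fact that $G$ attains its minimum on a sphere rather than at the origin, with the inner region absorbed by $e^{-2t\varepsilon}\le C_\varepsilon t^{-n/2}$ for $t\ge1$), and recover the smoothing prefactor by composing with $S_A(1)$ and absorbing $t^{\frac{n}{2}(\frac1q-\frac1p)}$ into a slightly enlarged exponential rate. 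Your argument is more self-contained --- it bypasses both the spectral computation and the abstract growth-bound theorem, reusing only tools the paper already develops for $A<1$ --- and as a by-product it covers the endpoint cases $q=1$ and $p=\infty$, which the paper's statement excludes. The paper's route is shorter once Lemma~\ref{lem;spectrum} is in hand, and its spectral information is needed anyway for the instability argument (Lemma~\ref{lem:SMis}). Both approaches exploit the same freedom in $\varepsilon$ to trade polynomial factors for exponential ones.
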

\begin{proof}
	Estimate \eqref{eq:upperA1} for $p = q$ is a direct consequence of Lemma \ref{lem;spectrum} combined with estimates of strongly continuous semigroups, 
	see \textit{e.g.} \cite[Ch.I\hspace{-.1em}V, Corollary 3.12]{EN00}. 
	In order to prove inequality \eqref{eq:upperA2} for $p=q$, 
	notice that for every $\delta \in (0,1)$ we have
	$$
	\N S_A(t) v_0 
	= 
	\N 
	\big( 
	e^{\delta t \Del} 
	( 
	e^{t((1-\delta)\Del - A \Del K \! \ast)} 
	) 
	\big) 
	v_0 
	= 
	\N e^{\delta t \Del} 
	\big( 
	e^{t(1-\delta)\left(\Del - \frac{A}{1-\delta} \Del K \! \ast\right)} 
	v_0
	\big)  
	.
	$$
	Applying the estimates of the heat semigroup from Remark~\ref{rem:heatest} and inequality~\eqref{eq:upperA1} with $\varepsilon/2$ to the operator $S_\frac{A}{1-\delta}(t)$ yields
	\eqspn{
		\|\N S_A(t) v_0\|_p
		\le 
		C(\varepsilon/2) 
		(\delta t)^{-1/2} 
		e^{t(1-\delta)\left(a_\delta +\varepsilon/2 \right)} 
		\| v_{0} \|_{p}
		\quad 
		\text{with} 
		\quad  
		a_\delta 
		= 
		\Big( \sqrt{\frac{A}{1-\delta}} - 1\Big)^2.
	}
	Choosing $\delta > 0$ sufficiently small to have 
	$
	a_\delta \le a + \varepsilon/2
	$. 
	
	For $q < p$ we proceed analogously using $L^q$-$L^p$ estimates of the heat semigroup from Remark~\ref{rem:heatest}.
\end{proof}

\begin{lemma}
	\label{lem:SMis}
	Assume $A > 1$, $a = (\sqrt{A} - 1)^2$ and $p \in (1, \infty)$. 
	For every $\gamma \in (0,\,1]$ and every $T>0$, 
	there exists $v_0\in L^p(\re^n)$ such that  for each $t\in [0,T]$
	\eqsp{
		\| 
		S_A(t)v_0 - e^{at} v_0 
		\|_p 
		\le 
		\gamma 
		\|v_0\|_p
		\label{eq:VEs} 
		\quad \text{\rm and} \quad 
		\| 
		S_A(t) v_0 
		\|_p 
		\le 
		2 
		e^{ a t} 
		\|v_0\|_p.
	}
\end{lemma}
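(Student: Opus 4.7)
The plan is to construct $v_0$ as a wave packet concentrated in Fourier space at a frequency $\xi_* \in \mathbb{R}^n$ where the symbol of the generator attains its maximum, so that $v_0$ behaves like an approximate eigenfunction of $S_A(t)$ with eigenvalue $e^{at}$. Writing $h(\xi) = |\xi|^2 - A|\xi|^2/(1+|\xi|^2)$, a direct computation shows that $\min_\xi h(\xi) = -a$ is attained exactly on the sphere $|\xi|^2 = \sqrt{A}-1$. Fix any such $\xi_*$. Since $\xi_*$ is an interior critical point, $\nabla h(\xi_*) = 0$, and Taylor expansion yields
\[
-t h(\xi_* + \eps\eta) - at = O\bigl(t \eps^2 |\eta|^2\bigr) \qquad \text{as } \eps \to 0,
\]
uniformly for $\eta$ in any fixed compact set.

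Pick a nontrivial $\phi \in \mathcal{S}(\mathbb{R}^n)$ with $\widehat{\phi}$ smooth and supported in the unit ball, and for a small parameter $\eps > 0$ to be chosen set
\[
v_0^{\eps}(x) := \operatorname{Re}\bigl(e^{i\xi_* \cdot x}\phi(\eps x)\bigr).
\]
The Fourier representation \eqref{eq;semi-def}--\eqref{eq;mu-def} of $S_A(t)$, combined with the substitution $\xi = \xi_* + \eps\eta$, gives
\[
\bigl(S_A(t) - e^{at}\bigr)\bigl(e^{i\xi_*\cdot x}\phi(\eps x)\bigr) = e^{i\xi_*\cdot x} G_{\eps, t}(\eps x),
\]
where $G_{\eps, t} := \mathcal{F}^{-1}[g_{\eps, t}]$ and $g_{\eps, t}(\eta) := \bigl(e^{-t h(\xi_*+\eps\eta)} - e^{at}\bigr)\widehat{\phi}(\eta)$. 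Because $S_A(t)$ commutes with complex conjugation (its symbol is real) and $\|\operatorname{Re}(w)\|_p \le \|w\|_p$, the rescaling $\|F(\eps\,\cdot)\|_p = \eps^{-n/p}\|F\|_p$ reduces the first estimate in \eqref{eq:VEs} to a bound of the form $\|G_{\eps, t}\|_p \le \gamma' \|\phi\|_p$, with some $\gamma'$ depending only on $\gamma$, uniformly in $t \in [0,T]$.

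The first paragraph gives $|g_{\eps, t}(\eta)| \le C T e^{aT}\eps^2\,|\widehat{\phi}(\eta)|$. Moreover, because $\nabla h(\xi_*) = 0$, the chain rule produces $\partial_{\eta_i} h(\xi_*+\eps\eta) = \eps\,\partial_{\xi_i} h(\xi_*+\eps\eta) = O(\eps^2)$, and an induction in the order of differentiation shows that every fixed derivative of $g_{\eps, t}$ in $\eta$ remains $O(\eps^2)$, uniformly in $t \in [0,T]$ and on the support of $\widehat{\phi}$. Since $g_{\eps, t}$ is smooth with support in a fixed ball, integration by parts in the inverse Fourier transform (equivalently, $(1+|y|^2)^N G_{\eps, t}(y) = \mathcal{F}^{-1}\bigl[(I - \Delta_\eta)^N g_{\eps, t}\bigr](y)$) yields
\[
|G_{\eps, t}(y)| \le C_N \eps^2 (1+|y|)^{-N} \qquad \text{for every } N \in \mathbb{N},
\]
with constants independent of $t \in [0,T]$. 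Choosing $N > n/p$ gives $\|G_{\eps, t}\|_p = O(\eps^2)$. A standard Riemann--Lebesgue argument shows $\|v_0^\eps\|_p = \eps^{-n/p}\|\cos(\xi_* \cdot y/\eps)\phi(y)\|_p \to c_p \eps^{-n/p}\|\phi\|_p$ as $\eps \to 0$, with $c_p > 0$ because $p \in (1,\infty)$.

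Combining these bounds gives $\|S_A(t)v_0^\eps - e^{at}v_0^\eps\|_p / \|v_0^\eps\|_p = O(\eps^2)$ uniformly in $t \in [0,T]$. Taking $\eps$ small enough delivers the first inequality of \eqref{eq:VEs}, and the second is then an immediate consequence of the triangle inequality together with $e^{at}\ge 1$:
\[
\|S_A(t)v_0^\eps\|_p \le \gamma\|v_0^\eps\|_p + e^{at}\|v_0^\eps\|_p \le (1 + e^{at})\|v_0^\eps\|_p \le 2e^{at}\|v_0^\eps\|_p.
\]
The main technical point is the uniform $O(\eps^2)$ bound on $G_{\eps,t}$, which hinges on two observations: $\nabla h(\xi_*) = 0$ kills the $O(\eps)$ term in the Taylor expansion so that both $g_{\eps,t}$ and all of its $\eta$-derivatives acquire the prefactor $\eps^2$, and the $\eps$-independent compact support of $\widehat{\phi}$ keeps all constants uniform across $t \in [0,T]$.
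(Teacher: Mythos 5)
Your proof is correct, but it takes a genuinely different route from the paper. The paper's argument is abstract: it invokes Lemma \ref{lem;spectrum} (resting on Wong's characterization of the $L^p$-spectrum of the pseudodifferential operator $\OPL$) to place $a=(\sqrt{A}-1)^2$ on the boundary of the spectrum, quotes the fact that boundary points lie in the approximate point spectrum to produce $v_\varepsilon$ with $\|\Delta v_\varepsilon - A\Delta K*v_\varepsilon - a v_\varepsilon\|_p\le\varepsilon\|v_\varepsilon\|_p$, and then converts this resolvent-level estimate into the semigroup estimate via the identity $S_A(t)v-e^{at}v=\int_0^1\frac{\d{}}{\d{s}}\bigl(e^{ts(\OPL)}e^{t(1-s)a}v\bigr)\ds$ together with Lemma \ref{lem;linearized-linear-DD-2}. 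You instead build the approximate eigenfunction explicitly as a wave packet whose Fourier transform concentrates at scale $\varepsilon$ around a point $\xi_*$ on the minimizing sphere $|\xi|^2=\sqrt{A}-1$ of the symbol $h$, and you estimate $\|(S_A(t)-e^{at})v_0^\varepsilon\|_p$ directly on the Fourier side, exploiting $\nabla h(\xi_*)=0$ to get the $O(\varepsilon^2)$ gain in $g_{\varepsilon,t}$ and all its $\eta$-derivatives, and then integration by parts to control the $L^p$-norm of the kernel $G_{\varepsilon,t}$. Your computation is sound (the identity $(S_A(t)-e^{at})(e^{i\xi_*\cdot x}\phi(\varepsilon x))=e^{i\xi_*\cdot x}G_{\varepsilon,t}(\varepsilon x)$ and the scaling $\|F(\varepsilon\,\cdot)\|_p=\varepsilon^{-n/p}\|F\|_p$ check out, and $\xi_*\neq 0$ since $A>1$ guarantees the oscillation needed for the lower bound on $\|v_0^\varepsilon\|_p$; you should just fix $\widehat{\phi}$ real and even so that $\phi$ is real, as your Riemann--Lebesgue step tacitly assumes). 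What your approach buys is self-containedness --- no appeal to the external spectral results of Wong and Shatah--Strauss --- plus a quantitative rate $O(\varepsilon^2)$ tied to the Fourier localization scale; what the paper's approach buys is brevity and robustness, since it only uses the location of the spectrum and the crude bound of Lemma \ref{lem;linearized-linear-DD-2}, and would survive perturbations of the operator that destroy the Fourier multiplier structure.
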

\begin{proof}
	It follows from Lemma \ref{lem;spectrum} that $a = (\sqrt{A} - 1)^2$ lies on the boundary of the spectrum of the operator $\OPL$ and such elements belong to the approximate point spectrum (see \textit{e.g.} \cite[Lemma~1]{ShSt}). Thus, for each $\varepsilon >0$ there exists $v_\varepsilon \in L^p(\R^n)$ (in fact, $v_\varepsilon$ belongs to the domain of the closure in $L^p(\re^n)$ of the operator $\OPL$) such that
	\eqsp{\label{eq;AppEigVec}
		\| \Delta v_\varepsilon - A\Delta K*v_\varepsilon - a v_\varepsilon \|_p 
		\le 
		\varepsilon 
		\|v_\varepsilon\|_p.
	}
	Hence, 
	by usual calculations for semigroups of linear operators (see \textit{e.g.} \cite[Ch.I\hspace{-.1em}I, Sec.~3]{EN00}) 
	and by Lemma \ref{lem;linearized-linear-DD-2}, we obtain
	\eqspn{
		\| 
		S_A(t) v_\varepsilon - e^{at} v_\varepsilon 
		\|_p 
		&= 
		\bigg\| 
		\int_0^1 
		\frac{{\rm d}}{{\rm d}s}e^{ts(\OPL)}e^{t(1-s)a}  v_\varepsilon
		\, \ds 
		\bigg\|_p \\
		&\le 
		\int_0^1 
		\big\| 
		e^{ts(\OPL)}e^{t(1-s)a} t(\Delta v_\varepsilon - A\Delta K*v_\varepsilon - a v_\varepsilon)  
		\big\|_p 
		\, \ds \\
		& \le 
		C
		t 
		\| 
		\Delta v_\varepsilon - A\Delta K*v_\varepsilon - a v_\varepsilon 
		\|_p 
		\int_0^1 
		e^{|A|Lts} e^{t(1-s)a} 
		\, \ds,
	}
	for some constant $C>0$ from inequality \eqref{ineq;11738} with $p=q$. 
	Therefore, inequality~\eqref{eq;AppEigVec} with $\varepsilon = \frac{\gamma}{CTe^{(|A|L+a)T}}$ provides the estimate
	\eqspn{
		\| 
		S_A(t) v_0 - e^{at} v_0 
		\|_p 
		\le 
		\frac{te^{(|A|L + a) t}}{Te^{(|A|L + a) T}} 
		\gamma 
		\|v_0 \|_p \le \gamma \|v_0\|_p.
	}
	
	Since $a>0$, the second inequality in \eqref{eq:VEs} can be obtained immediately from the first one by choosing $\gamma = 1$.
\end{proof}


\section{Perturbations of constant solutions}
\label{sec:GlobStab}

We study a solution $u = u(t,x)$ of problem \eqref{eq;DD} which is a perturbation of the constant stationary solution $A \in \R$. Thus, the function $v(t,x) = u(t,x) - A$ satisfies
\eq{\label{eq;linearized-DD}
	\left\{
	\spl{
		& v_t - \Del v + A\Del K \ast v + \N \cdot (v \N K*v) = 0,&&\quad t > 0,\ \  x \in \re^{n},\\
		&v(0,x)=v_{0}(x),&&\quad x \in \re^{n}.
	}
	\right.
}
Here, in fact, we consider a mild solution to this problem satisfying the integral equation
\eqsp{
	v(t) 
	= 
	S_A(t) v_0 
	- 
	\int_0^t 
	\nabla S_A(t-\tau) \cdot \big( v(\tau)\nabla K * v(\tau) \big) 
	\dta,
	\label{eq;GlobMild}
}
where the semigroup $\lbrace S_A(t) \rbrace_{t\ge 0}$ has been studied in Section \ref{sec:OpPr}.

\subsection{Local-in-time solutions}
We begin by a result on local-in-time solutions.

\begin{proposition}\label{prop;local-exist}
	For each $p$ satisfying condition \eqref{eqn:warP}
	and every $v_{0} \in L^p(\R^n)$,
	there exists $T>0$ and a unique local-in-time mild solution to problem \eqref{eq;linearized-DD} in 
	$C\big( [0,T); L^p(\R^n) \big)$.
\end{proposition}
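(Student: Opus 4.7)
The plan is to apply the abstract Banach fixed point scheme, Proposition \ref{prop;Banach-fixed-pt}, to the integral equation \eqref{eq;GlobMild} in the Banach space
\[
X_T \equiv C\bigl([0,T]; L^p(\R^n)\bigr), \qquad \|v\|_{X_T} \equiv \sup_{t\in[0,T]}\|v(t)\|_p.
\]
First, the map $t\mapsto S_A(t)v_0$ lies in $X_T$ for every $v_0\in L^p(\R^n)$ with $p\in[1,\infty)$, because $\{S_A(t)\}_{t\ge 0}$ is an analytic (hence strongly continuous) semigroup on $L^p(\R^n)$. Lemma \ref{lem;linearized-linear-DD-2} with $p=q$ gives $\|S_A(\cdot)v_0\|_{X_T}\le e^{|A|LT}\|v_0\|_p$, so the linear datum is controlled.

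Next, I would bound the bilinear form
\[
Q[u,v](t)=-\int_0^t \nabla S_A(t-\tau)\cdot\bigl(u(\tau)\,\nabla K\ast v(\tau)\bigr)\,\dta
\]
following the scheme of Lemma \ref{lem;Duhamel-term-est}: select auxiliary exponents $k\in(n,\infty]$, $q_1\in[1,n/(n-1))$, and $r\in[1,p]$ satisfying $1/r=1/p+1/k$ and $1+1/k=1/p+1/q_1$. As already verified there, such a triple is available for every $p$ in the range \eqref{eqn:warP}. The standard Young inequality combined with Lemma \ref{lem:KProp} yields $\|\nabla K\ast v(\tau)\|_k\le\|\nabla K\|_{q_1}\|v(\tau)\|_p$, H\"older's inequality gives $\|u(\tau)\,\nabla K\ast v(\tau)\|_r\le\|u(\tau)\|_p\|\nabla K\ast v(\tau)\|_k$, and the gradient estimate from Lemma \ref{lem;linearized-linear-DD-2} provides
\[
\|\nabla S_A(t-\tau)g\|_p\le C(t-\tau)^{-\frac12-\frac{n}{2k}}e^{|A|L(t-\tau)}\|g\|_r.
\]
Since $k>n$, the singularity in $\tau$ is integrable, and assembling the chain produces
\[
\|Q[u,v]\|_{X_T}\le C\,e^{|A|LT}\,T^{\frac12-\frac{n}{2k}}\,\|u\|_{X_T}\|v\|_{X_T}.
\]
Continuity of $t\mapsto Q[u,v](t)$ in $L^p(\R^n)$ follows from a routine dominated convergence argument using the above kernel bound, so $Q$ indeed maps $X_T\times X_T$ into $X_T$.

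Finally, I would pick $T>0$ so small that $e^{|A|LT}\|v_0\|_p$ meets the smallness assumption of Proposition \ref{prop;Banach-fixed-pt} and $C\,e^{|A|LT}\,T^{1/2-n/(2k)}$ is below the admissible bilinear bound $1/(4C_1)$ required there. This delivers a unique fixed point $v\in X_T$ in the ball of radius $2\delta$, which is the desired local-in-time mild solution. Uniqueness in the full class $C([0,T);L^p(\R^n))$, rather than only in the small ball, follows by the standard Gronwall argument: the difference of two solutions is estimated on short subintervals starting at $t=0$ by the same bilinear bound, forcing it to vanish. The main technical point where attention is needed is the simultaneous selection of the exponents $k$, $q_1$, $r$ compatible with \eqref{eqn:warP}; but this combinatorial check was the core of Lemma \ref{lem;Duhamel-term-est} and transfers verbatim from the $\Lpu$ setting to the plain $L^p$ setting used here, since all we invoke is the ordinary Young inequality applied to the integrable/locally integrable pieces of $\nabla K$.
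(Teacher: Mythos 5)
Your proposal is correct and follows essentially the same route as the paper: the paper proves Proposition \ref{prop;local-exist} by invoking Proposition \ref{prop;Banach-fixed-pt} together with the bilinear estimate of Lemma \ref{lem:GradSemiEstConv}, which is in turn proved exactly by the exponent bookkeeping of Lemma \ref{lem;Duhamel-term-est} that you reproduce (H\"older, Young with $\nabla K\in L^{q_1}$, and the $L^r$--$L^p$ gradient semigroup bound with $k>n$ making the time singularity integrable). The only cosmetic difference is that you quote the crude bound $e^{|A|Lt}$ from Lemma \ref{lem;linearized-linear-DD-2} uniformly in $A$, whereas the paper's Lemma \ref{lem:GradSemiEstConv} distinguishes $A<1$ from $A\ge 1$; for a local-in-time result this is immaterial.
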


For the proof of this proposition, 
one should follow the reasoning in the proof of Theorem~\ref{thm;main}. 
In particular, a solution is obtained via Proposition \ref{prop;Banach-fixed-pt} where the required estimate of the bilinear form 
\eq{
	\widetilde Q[v, w](t)
	=
	-\int_{0}^{t} \nabla S_A(t-s)\cdot \big(v(s)\nabla K * w(s)\big)(s) \ds
	\label{eq;BilQ}
}
is a direct consequence of the following lemma.

\begin{lemma}\label{lem:GradSemiEstConv}
	For each $p$ satisfying condition \eqref{eqn:warP}
	there exist $k\in (n, \infty]$ and positive numbers $C_1 = C_1(p,k,n,\nabla K) $ and $C_2 = C_2(p,k,n,\nabla K)$ such that
	\eq{
		\| 
		\nabla S_A(\tau) \cdot (v \nabla K *w) 
		\|_p 
		\le 
		C_2
		\tau^{-\frac{n}{2}{\frac{1}{k}-\frac{1}{2} }} 
		e^{C_1\tau} \|v\|_p \|w\|_p 
		\quad 
		\text{\rm for all} 
		\quad 
		v,w \in L^p(\R^n).
	}
	If $A<1$ then $C_1=0$ and if $A\ge 1$ then $C_1$ can be an arbitrary constant satisfying $C_1 > a = (\sqrt{A} - 1)^2$.
\end{lemma}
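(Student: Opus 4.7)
The plan is to estimate $\|\N S_A(\tau)\cdot(v\N K*w)\|_p$ by combining an $L^r$--$L^p$ gradient bound for the semigroup $\{S_A(\tau)\}_{\tau\ge 0}$ with a Hölder splitting of the nonlinear product and a Young-type convolution estimate for $\N K$, in direct analogy with the proof of Lemma~\ref{lem;Duhamel-term-est} for the heat semigroup. Fix an auxiliary exponent $k \in (n,\infty]$ and set $\frac{1}{r} = \frac{1}{p} + \frac{1}{k}$ together with $\frac{1}{q_1} = 1 + \frac{1}{k} - \frac{1}{p}$. Depending on the sign of $A-1$, I would invoke either the gradient bound \eqref{eqn;dStpq} of Theorem~\ref{lem;Lp-Lq-estimate-S} (for $A<1$, which yields $C_1=0$) or the gradient bound \eqref{eq:upperA2} of Lemma~\ref{lem;upperA} with $\varepsilon = C_1 - a > 0$ (for $A>1$; the borderline case $A=1$ is handled by the same splitting $S_A(\tau) = e^{\delta\tau\Del}S_{A/(1-\delta)}((1-\delta)\tau)$ used in the proof of Lemma~\ref{lem;upperA} with $\delta>0$ chosen small). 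This produces
$$
\|\N S_A(\tau)\cdot f\|_p \le C\,\tau^{-\frac{n}{2k}-\frac12}\,e^{C_1\tau}\,\|f\|_r
$$
for $f = v\,\N K * w$.

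Next I would peel off the product structure of $f$ by Hölder's inequality, $\|v\,\N K*w\|_r \le \|v\|_p\,\|\N K*w\|_k$, and then estimate the convolution by Young's inequality, $\|\N K*w\|_k \le \|\N K\|_{q_1}\|w\|_p$. The constraint $q_1 \in [1, n/(n-1))$ ensures that $\|\N K\|_{q_1}$ is finite, by Lemma~\ref{lem:KProp}. Chaining these three bounds delivers the asserted inequality, with the constant $C_2$ absorbing $\|\N K\|_1 + \|\N K\|_{q_1}$.

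The remaining task is to verify that, under condition~\eqref{eqn:warP}, one can choose a single $k$ simultaneously satisfying $k > n$, $r \ge 1$ (that is, $k \ge p/(p-1)$), and $q_1 \in [1, n/(n-1))$. This is exactly the admissibility analysis carried out at the end of the proof of Lemma~\ref{lem;Duhamel-term-est}, which applies here verbatim: for $n \ge 2$ any $k\in [p,2p]\cap [p/(p-1),\infty]\cap (n,\infty]$ works, and for $n=1$ any $k \in [p/(p-1),\infty]\cap (1,\infty]$ suffices. The main obstacle is securing the sharp exponential rate $C_1 > a$ when $A \ge 1$: the naive perturbation bound from Lemma~\ref{lem;linearized-linear-DD-2} yields only $C_1 = |A|L$, which is far from sharp. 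Access to $C_1$ arbitrarily close to $a$ hinges on the spectral identification provided by Lemma~\ref{lem;spectrum} together with the analytic semigroup estimates of Lemma~\ref{lem;upperA}; once those are in place, the present lemma follows by the three-step reduction above.
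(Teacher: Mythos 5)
Your proposal is correct and coincides with the argument the paper intends: it explicitly skips the proof, stating that it repeats the proof of Lemma~\ref{lem;Duhamel-term-est} (H\"older splitting plus the Young-type bound for $\N K$ and the same admissibility check for $k$), with the heat-semigroup bounds replaced by Theorem~\ref{lem;Lp-Lq-estimate-S} when $A<1$ and Lemma~\ref{lem;upperA} when $A>1$. Your additional treatment of the borderline case $A=1$ via the splitting $S_A(\tau)=e^{\delta\tau\Del}S_{A/(1-\delta)}((1-\delta)\tau)$ is a sensible detail the paper leaves implicit, not a different approach.
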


We skip the proof of this lemma, because it is the same as the proof of Lemma~\ref{lem;Duhamel-term-est}. 
In particular, 
it is based on the semigroup estimates from Lemma~\ref{lem;upperA}.

\subsection{Global-in-time solutions for $A \in [0,1)$}
The proof of Theorem \ref{thm;global-exist} requires 
the following extension of Lemma \ref{lem:GradSemiEstConv}.

\begin{lemma}\label{lem:GradConEst}
	Assume that $A \in [0,1)$. For
	\begin{itemize}
		\item
		each  $p$ satisfying  condition \eqref{eqn:warP},
		\item
		each $q>n$ if $n\ge 2$ and $q\ge 1$ if $n=1$ satisfying $q\in [p,\, 2p]$,
		\item
		each $r\ge 1$ such that $r \in \left[ \frac{q}{2}, \,p\right]$, 
	\end{itemize}
	there exists a constant $C > 0$ such that
	\begin{equation}\label{eq:GradConEst}
		\| 
		\nabla S_A(\tau) \cdot ( v \nabla K *w ) 
		\|_p 
		\le 
		C
		\tau^{-\frac{n}{2}( \frac{1}{r} - \frac{1}{p} ) - \frac{1}{2}} 
		\|v\|_{q} 
		\|w\|_{q},
	\end{equation}
	for all $v, w \in L^q(\R^n)$ and $\tau>0$.	
	
\end{lemma}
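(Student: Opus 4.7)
\smallskip

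\noindent\textbf{Proof proposal for Lemma \ref{lem:GradConEst}.}

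The plan is to decouple the action of the semigroup from the pointwise product by applying, successively, the gradient estimate for $S_A(\tau)$ from Theorem~\ref{lem;Lp-Lq-estimate-S}, a H\"older inequality, and the classical Young convolution inequality. Concretely, I would first write
\begin{equation*}
\| \nabla S_A(\tau)\cdot (v\,\nabla K\ast w)\|_p
\le C\,\tau^{-\frac{n}{2}(\frac{1}{r}-\frac{1}{p})-\frac{1}{2}}\,
\| v\,\nabla K\ast w\|_r,
\end{equation*}
using estimate \eqref{eqn;dStpq} with exponents $r\le p$, which is legal because the assumption $r\in[q/2,p]$ guarantees $r\le p$ and, together with $r\ge 1$, places $(r,p)$ in the range required by Theorem~\ref{lem;Lp-Lq-estimate-S}.

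Next, I would control the product using H\"older's inequality with the splitting $\tfrac{1}{r}=\tfrac{1}{q}+\tfrac{1}{s}$, so
\begin{equation*}
\| v\,\nabla K\ast w\|_r
\le \|v\|_q\,\|\nabla K\ast w\|_s.
\end{equation*}
The choice of $s$ is admissible (i.e.\ $s\ge 1$) because $r\le p\le q$ yields $\tfrac{1}{s}=\tfrac{1}{r}-\tfrac{1}{q}\ge 0$, and the upper constraint $r\ge q/2$ will be used in the next step. For the convolution factor I would apply Young's inequality $\|\nabla K\ast w\|_s\le \|\nabla K\|_t\,\|w\|_q$ with $t$ defined by $1+\tfrac{1}{s}=\tfrac{1}{t}+\tfrac{1}{q}$, which gives
\begin{equation*}
\frac{1}{t}=1+\frac{1}{r}-\frac{2}{q}.
\end{equation*}
It then remains to check that $\nabla K\in L^t(\re^n)$, which by Lemma~\ref{lem:KProp} requires $t\in[1,\tfrac{n}{n-1})$ when $n\ge 2$, and just $t\ge 1$ when $n=1$.

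The verification of these two endpoint conditions on $t$ is the only delicate point of the argument. The condition $t\ge 1$ is equivalent to $r\ge q/2$, which is exactly the assumption of the lemma. The condition $t<\tfrac{n}{n-1}$ (for $n\ge 2$) amounts to $\tfrac{1}{r}>\tfrac{2}{q}-\tfrac{1}{n}$; since $r\le p\le q$ gives $\tfrac{1}{r}\ge\tfrac{1}{q}$, and $q>n$ yields $\tfrac{1}{q}<\tfrac{1}{n}$, one concludes
\begin{equation*}
\frac{1}{r}\ge \frac{1}{q}=\frac{2}{q}-\frac{1}{q}>\frac{2}{q}-\frac{1}{n},
\end{equation*}
with strict inequality, so $t$ lies in the admissible range. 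Combining the three estimates produces the claimed bound with $C=C\,\|\nabla K\|_t$, and the proof is complete. The only step that required any real thought was this bookkeeping on the Young exponent $t$; all other estimates are immediate applications of results already established earlier in the paper.
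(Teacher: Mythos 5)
Your proposal is correct and follows essentially the same route as the paper: apply the gradient estimate \eqref{eqn;dStpq} with the pair $(r,p)$, then H\"older with $\tfrac1r=\tfrac1q+\tfrac1s$, then Young with $\tfrac1t=1+\tfrac1r-\tfrac2q$, and verify $t\in\bigl[1,\tfrac{n}{n-1}\bigr)$ exactly as the paper does (using $r\ge q/2$ for $t\ge1$ and $r\le q$, $q>n$ for the upper bound). The only cosmetic remark is that your justification of $s\ge1$ actually shows $\tfrac1s\ge0$; the bound $s\ge1$ follows instead from $\tfrac1s\le\tfrac1r\le1$, which is immediate.
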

\begin{proof}
	First, 
	we use Theorem \ref{lem;Lp-Lq-estimate-S} and the H\"{o}lder inequality to estimate
	\eqsp{  
		\| 
		\nabla S_A(\tau) \cdot (v\nabla K *w ) 
		\|_p 
		\le 
		C
		\tau^{-\frac{n}{2}( \frac{1}{r} - \frac{1}{p} ) - \frac{1}{2}} 
		\| v \nabla K * w  \|_{r} 
		 \le 
		C 
		\tau^{-\frac{n}{2}( \frac{1}{r} - \frac{1}{p} ) - \frac{1}{2}} 
		\| v \|_{q} \| \nabla K * w \|_{k}
	}
	with $ 1 \le r \le p$ satisfying $\frac{1}{r} = \frac{1}{q} + \frac{1}{k}$.
	Next, we apply the Young inequality 
	\eqn{ 
		\| \nabla K *  w \|_{k} 
		\le 
		C\|\nabla K \|_{q_1} 
		\| w\|_{q} 
	}
	with 
	$
	\frac{1}{q_1} + \frac{1}{q} = 1 + \frac{1}{k}
	$.
	Let us show that $q_1 \in \left[1, \frac{n}{n-1} \right)$ in order to have $\nabla K \in L^{q_1}(\R^n)$. Indeed, by the assumption on $p,q,r$ we have
	\eqn{ 
		1 - \frac{1}{q} \le\frac{1}{q_1} 
		= 
		1 + \frac{1}{r} - \frac{2}{q} 
		\le 1,
	}
	where, for $n\ge 2$, we use also the inequality
	$1-\frac{1}{q} > 1-\frac{1}{n}$.
\end{proof}

\begin{proof}[Proof of Theorem \ref{thm;global-exist}]
	It is sufficient to construct a \git solution to problem~\eqref{eq;linearized-DD} formulated in the mild form \eqref{eq;GlobMild}, because $u = A + v$ by the uniqueness of solutions from Proposition \ref{prop;local-exist}. The solution is obtained via Proposition \ref{prop;Banach-fixed-pt} applied to equation~\eqref{eq;GlobMild} in the Banach space 
	\eq{
		\spl{
			\mathcal{X} =  C\big([0,\infty); L^p(\R^n)\big) 
			\cap 
			\bigg\{ 
			v\in C\big((0,\infty); L^q(\R^n)\big) 
			: 
			\sup_{t>0} 
			t^{\frac{n}{2}(\frac{1}{p} - \frac{1}{q})} 
			\| v(t)\|_{q} 
			< 
			\infty  
			\bigg\}
		}
	}
	with the norm 
	$
	\|v\|_\mathcal{X} 
	\equiv
	\sup_{t>0} \|v(t)\|_{p} 
	+ 
	\sup_{t>0} t^{\frac{n}{2}(\frac{1}{p} - \frac{1}{q})} 
	\| v(t)\|_{q}
	$. 
	
	First, 
	for every $v_0 \in L^p(\R^n)$, 
	it follows from Theorem~\ref{lem;Lp-Lq-estimate-S} that 
	\eq{\label{eq:Ku0Est}
		\| 
		S_A(t) v_0
		\|_\mathcal{X} 
		\le 
		C 
		\|v_0 \|_{p} 
	}
	for all $t>0$ and for some constant $C = C(p,q,r,n)>0$.
	
	Next, 
	we estimate the bilinear form $\widetilde Q[u,v]$ given by formula \eqref{eq;BilQ}
	for all $v,w \in \mathcal{X}$. Here, for $p$ satisfying condition \eqref{eqn:warP} with $p \le n$ and for $q \in \left(n, 2p\right]$, we choose one more exponent $r \in \left[ \frac{q}{2} , p \right]$ and $r \ge 1$ such that   
	\eqn{  
		\frac{2}{q} + \frac{1}{n} - \frac{1}{p} 
		\le 
		\frac{1}{r} \le \frac{2}{q}.
	} 
By Lemma \ref{lem:GradSemiEstConv}, 
	there exists $k > n$ such that
	\eqsp{\label{eq:QNormLocal}
		\| 
		\widetilde Q[v,w](t)
		\|_{p} 
		\le 
		C 
		\int_0^t 
		(t-s)^{-\frac{n}{2}{\frac{1}{k}-\frac{1}{2} }} 
		\|v(s)\|_p \|w(s)\|_p 
		\ds 
		\le 
		C
		t^{-\frac{n}{2 k} + \frac{1}{2}}
		\|
		v
		\|_\mathcal{X} 
		\|
		w
		\|_\mathcal{X} 
	}
	with $-\frac{n}{2 k} + \frac{1}{2} > 0$. 
	Next, applying Lemma~\ref{lem:GradConEst}, 
	we obtain
	\eq{
		\spl{
			\| 
		\widetilde	Q[v,w](t)
			\|_{p} 
			&\le 
			C 
			\int_0^t 
			(t-s)^{-\frac{n}{2}( \frac{1}{r} - \frac{1}{p} ) - \frac{1}{2}}
			\|v(s)\|_{q} 
			\|w(s)\|_{q} 
			\ds \\ 
			&\le 
			C 
			\|
			v
			\|_\mathcal{X} 
			\|
			w
			\|_\mathcal{X} 
			\int_0^t 
			(t-s)^{-\frac{n}{2}( \frac{1}{r} - \frac{1}{p} ) - \frac{1}{2}} 
			s^{-n( \frac{1}{p} - \frac{1}{q} )} 
			\ds.
			\label{eq:BformEstimate}
		}
	}
	Notice that the integral on the right hand side is convergent because the exponents satisfy the inequalities 
	\eqsp{
		\quad
		-\frac{n}{2}
		\Big(
		\frac{1}{r}-\frac{1}{p}
		\Big)
		-
		\frac{1}{2} 
		=&
		-1 
		+
		\frac{n}{2}
		\Big(
		\frac{1}{p} - \frac{1}{q}
		\Big) 
		>  -1, 
		\\
		-n
		\Big(
		\frac{1}{p}-\frac{1}{q}
		\Big)
		>
		&-n 
		\Big(
		\frac{2}{q} - \frac{1}{q}
		\Big)
		=-\frac{n}{q} > -1
	}
	resulting immediately from the properties of the parameters $p,q,r$.
	Therefore,
	\eq{\label{eq:QNormGlobal}
		\spl{
			\|
		\widetilde	Q[v,w](t)
			\|_{p} 
			\le &
			C 
			B
			\Big( 
			1 
			- 
			n 
			\Big( 
			\frac{1}{p} - \frac{1}{q} 
			\Big), 
			\frac{1}{2} 
			- 
			\frac{n}2 
			\Big( 
			\frac{1}{r} 
			- 
			\frac{1}{p} 
			\Big) 
			\Big)
			\\ 
			& \times 
			t^{-\frac{n}{2}( \frac{1}{r} - \frac{1}{p} ) - \frac{1}{2} - n ( \frac{1}{p} - \frac{1}{q} ) + 1} 
			\|
			v
			\|_\mathcal{X} 
			\|
			w
			\|_\mathcal{X} 
			, 
		}
	} 
	where $B = B(x,y)$ denotes the beta function. 
	Moreover, 
	by the assumption on $r$, 
	we have 
	\eqspn{
		-\frac{n}{2}
		\Big( 
		\frac{1}{r} - \frac{1}{p} 
		\Big) 
		- 
		\frac{1}{2} 
		- 
		n 
		\Big( 
		\frac{1}{p} - \frac{1}{q} 
		\Big) 
		+ 
		1 
		&= 
		-
		\frac{n}{2r} 
		-
		\frac{n}{2p} 
		+
		\frac{n}{q} 
		+ 
		\frac{1}{2} \\ 
		&\le 
		-
		\frac{n}{2}
		\Big( 
		\frac{2}{q} 
		+ 
		\frac{1}{n} 
		- 
		\frac{1}{p} 
		\Big) 
		-
		\frac{n}{2p} 
		+
		\frac{n}{q} 
		+
		\frac{1}{2} 
		=
		0.
	}
	Consequently, 
	inequalities \eqref{eq:QNormGlobal} and \eqref{eq:QNormLocal} provide the estimate
	\eqsp{\label{eq:BFtEst}
		\sup_{t>0} 
		\| 
		\widetilde Q[v,w] (t) 
		\|_{p} 
		\le &
		C 
		\sup_{t>0} 
		\big(
		\min
		\big\{
		t^{-\frac{n}{2 k} + \frac{1}{2}}, 
		\, 
		t^{-\frac{n}{2}( \frac{1}{r} - \frac{1}{p}) -\frac12 - n ( \frac{1}{p} - \frac{1}{q} ) + 1}
		\big\} 
		\big)
		\|
		v
		\|_\mathcal{X} \|w\|_\mathcal{X}\\
		\le &
		C 
		\|
		v
		\|_\mathcal{X}
		\|
		w
		\|_\mathcal{X}
	}
	with a positive constant $C>0$.
	
	We proceed with the $L^q$-component of the norm in $\mathcal{X}$ analogously. By Lemma~\ref{lem:GradSemiEstConv},
	\eqsp{\label{eq:QNormLocalQ} 
		t^{\frac{n}{2} ( \frac{1}{p} - \frac{1}{q} ) } 
		\|
		\widetilde Q[v,w](t)
		\|_{q} 
		\le 
		C
		t^{\frac{n}{2p}-\frac{n}{2}(\frac{1}{q} + \frac{1}{k}) + \frac{1}{2}}
		\|
		u
		\|_\mathcal{X}
		\|
		v
		\|_\mathcal{X} 
	}
	with 
	$
	\frac{n}{2p}-\frac{n}{2}\big(\frac{1}{q} + \frac{1}{k}\big) + \frac{1}{2} 
	\ge 
	0
	$ 
	and, by Lemma \ref{lem:GradConEst},   
	\eq{\label{eq:BFtPowEst}
		\spl{
			t^{\frac{n}{2} ( \frac{1}{p} - \frac{1}{q}) } 
			\|
			\widetilde Q[v,w](t)
			\|_{q} 
			&\le 
			C 
			t^{\frac{n}{2}( \frac{1}{p} - \frac{1}{q})} 
			\|
			v
			\|_\mathcal{X} 
			\|
			w
			\|_\mathcal{X} 
			\int_0^t 
			(t-s)^{-\frac{n}{2} ( \frac{1}{r} - \frac{1}{q}) - \frac{1}{2}} 
			s^{-n( \frac{1}{p} -\frac{1}{q} )} 
			\ds \\ 
			&=
			C 
			t^{ -\frac{n}{2}( \frac{1}{r} - \frac{1}{q}) - \frac{1}{2} - \frac{n}{2} ( \frac{1}{p} - \frac{1}{q} ) + 1}
			\| 
			v
			\|_\mathcal{X} 
			\|
			w
			\|_\mathcal{X}
		}
	}
	with 
	$$
	-\frac{n}{2}
	\Big( 
	\frac{1}{r} - \frac{1}{q} 
	\Big) 
	- 
	\frac{1}{2} 
	- 
	\frac{n}{2} 
	\Big( 
	\frac{1}{p} - \frac{1}{q} 
	\Big) 
	+ 1
	\le 
	0.
	$$ 
	Therefore, 
	there exists a constant $C>0$ independent of $t>0$ such that 
	\eqsp{\label{eq:LqMinEst}
		&
		\sup_{t>0}
		\, t^{\frac{n}{2}(\frac{1}{p} - \frac{1}{q} )} 
		\| 
		\widetilde Q[v,w](t) 
		\|_{q} 
		\\ 
		\le &
		C 
		\sup_{t>0}  
		\big(
		\min
		\big\{ 
		t^{\frac{n}{2p}-\frac{n}{2}(\frac{1}{q} + \frac{1}{k}) + \frac{1}{2}}
		, 
		t^{-\frac{n}{2}( \frac{1}{r} - \frac{1}{q} ) - \frac{1}{2} - \frac{n}{2} ( \frac{1}{p} - \frac{1}{q} ) + 1}
		\big\}
		\big)
		\|
		v
		\|_\mathcal{X} 
		\|
		w
		\|_\mathcal{X}\\
		\le &
		C 
		\|
		v
		\|_\mathcal{X} 
		\|
		w
		\|_\mathcal{X}.
	}
	Finally, 
	it follows from inequalities \eqref{eq:BFtEst} and \eqref{eq:LqMinEst} that
	\eq{
		\| 
	\widetilde	Q[v,w]
		\|_\mathcal{X} 
		\le 
		\eta 
		\|
		v
		\|_\mathcal{X} 
		\|
		w
		\|_\mathcal{X}
	} 
	for a positive number $\eta$ independent of $t, \, v$ and $w$. 
	Hence,  
	if $\| v_0\|_{p}$ is sufficiently small, 
	by inequality \eqref{eq:Ku0Est} and Proposition \ref{prop;Banach-fixed-pt}, there exists a mild solution of problem~\eqref{eq;linearized-DD} in the space $\mathcal{X}$. 
	This solution is unique by Proposition \ref{prop;local-exist}.
\end{proof}

\subsection{Instability for $A>1$}
In this section, we prove that the constant solution $u = A$ of problem \eqref{eq;DD}  is unstable in $L^p(\R^n)$ if $A>1$. Here we apply the classical  so-called linearization principle which was used \textit{e.g.} in 
\cite{Friedlander,ShSt}.

\begin{proof}[Proof of Theorem \ref{thm;instability}]
	We begin with arbitrary $\delta \in (0,1)$ and arbitrary $v_0 \in L^p(\R^n)$ with $\|v_0\|_p = 1$ to be chosen later on. Under the assumpion on $p$,
	by Corollary \ref{lem;LitSol}, 
	there exists a unique local-in-time mild solution $v \in C\big([0,T_{\max}); L^p(\R^n)\big) $ to problem~\eqref{eq;linearized-DD}, 
	with the initial datum $\delta v_0$. 
	Suppose that this solution is \git and for $a = ( \sqrt{A} - 1 )^2$ define two numbers
	\eqsp{
		T 
		= 
		\sup 
		\left\lbrace 
		t
		: 
		\, 
		\| 
		v(\tau) - S_A(\tau) \delta v_0 
		\|_p 
		\le 
		\frac{\delta}2 
		e^{a\tau} 
		\text{ for all } 
		\tau\in[0,t] 
		\right\rbrace 
		\label{eq:TDef}
	}
	and 
	$
	T' 
	= 
	\frac{1}{a} 
	\log 
	\left(
	\frac{2}{\delta}
	\right),
	$ 
	hence 
	$\delta e^{aT'} = 2$. 
	
	If either $T > T'$ or $T= \infty$, 
	then the zero solution is unstable.
	Indeed, 
	by Lemma \ref{lem:SMis}, for each $\gamma >0$ we may choose $v_0\in L^p(\R^n)$ 
	with $ \| v_0\|_p = 1$ 
	such that
	\eqsp{\label{eq:InRsT}
		\big\|
		S_A(T')\delta v_0 
		- 
		e^{aT'} \delta v_0 
		\big\|_p
		\le 
		\gamma 
		\delta 
		\|
		v_0
		\|_p
		=
		\gamma \delta
		.
	}
	By the definition of $T$ and by inequality \eqref{eq:InRsT}, we obtain 
	\eqsp{
		\|
		v(T')
		\|_p 
		\ge 
		\|
		S_A(T')
		\delta 
		 v_0
		\|_p 
		- 
		\frac{\delta}{2} 
		e^{aT'} 
		\ge 
		\| 
		e^{aT'} \delta v_{0} 
		\|_{p}
		-
		\gamma \delta 
		- 
		\frac{\delta}{2} 
		e^{aT'} 
		= 
		\frac\delta2 
		e^{aT'} 
		- 
		\gamma 
		\delta 
		\ge 
		1 
		- 
		\gamma.
	} 
	In particular, $	\|	v(T') 	 \|_p \ge\frac{1}{2}$ for $\gamma= \frac{1}{2}$.
	
	Next, suppose that $T \le T'$ and consider the mild representation of the solution of problem \eqref{eq;linearized-DD} with the initial condition $\delta v_0$
	\eqsp{
		v(t) 
		- 
		S_A(t)
		\delta 
		v_0 
		= 
		\int_0^t 
		\nabla 
		S_A(t-\tau) 
		\cdot 
		\big( 
		v(\tau)\nabla K * v(\tau) 
		\big) 
		\dta.
	}
	Lemma \ref{lem:GradSemiEstConv} with $C_1 = \frac{3}{2} a$, 
	estimates \eqref{eq:VEs} and definition of $T$ in \eqref{eq:TDef} lead to the inequality
	\eqsp{
		& \| 
		v(t)  -  S_A(t)\delta v_0 
		\|_p \\
		\le &
		C 
		\int _0^t 
		(t-\tau)^{-\frac{n}{2}{\frac{1}{q}-\frac{1}{2} }} 
		e^{\frac{3}{2}a(t-\tau)} 
		\|
		v(\tau)
		\|_p^2  
		\dta  \\
		\le &
		C 
		\int_0^t 
		(t-\tau)^{-\frac{n}{2}{\frac{1}{q}-\frac{1}{2} }} 
		e^{\frac{3}{2}a(t-\tau)} 
		\left(
		\| 
		S_A(\tau)
		\delta 
		v_0 
		- 
		v(\tau)  
		\|_p^2 
		+ 
		\| 
		S_A(\tau)
		\delta 
		v_0
		\|_p^2 
		\right) \, {\rm d}\tau\\
		\le &
		C 
		\int_0^t 
		(t-\tau)^{-\frac{n}{2}{\frac{1}{q}-\frac{1}{2} }} 
		e^{\frac{3}{2}a(t-\tau)} 
		\Big(
		\frac{\delta^2}{4}e^{2a\tau} 
		+ 
		4\delta^2e^{2a\tau} 
		\Big) 
		\dta\\
		\le &
		C 
		\delta^2 
		e^{2at}. 
		\label{eq:DuhDifEs}
	}
	for all $t\in[0, T]$, 
	where the last inequality is explained in Remark \ref{rem:IntTT}, below. 
	Thus, from the definition of the number $T$ and from inequality \eqref{eq:DuhDifEs} for $t = T$, 
	we have the relations
	\eqspn{
		\frac\delta2 
		e^{aT} 
		= 
		\| 
		v(T) 
		- 
		S_A(T) \delta v_0 
		\|_p 
		\le 
		C
		\delta^2 
		e^{2aT}		
	}
	which imply the inequality $\frac{1}{2C} 
	\le 
	\delta 
	e^{aT}$. 
	In particular, 
	the number $T^*$ defined by the equation  
	$
	\delta e^{aT^*} 
	= 
	\frac{1}{2C}
	$
	satisfies $T^* \leq T$. 
	Hence, by inequality \eqref{eq:DuhDifEs} with $t=T^*$ we have 
	\eqsp{
		\left\| 
		v(T^*) 
		\right\|_p
		\ge
		\|
		S_A(T^*)
		\delta  
		v_0
		\|_p 
		-
		\frac{1}{2}\delta e^{aT^{*}}
		= 
		\|
		S_A(T^*)
		\delta  
		v_0
		\|_p 
		-
		\frac{1}{4C} .
	}
	Finally, we apply Lemma \ref{lem:SMis} 
	with 
	$\gamma = \frac{1}{ 4k_{0}C} \le 1$ 
	for some fixed $k_{0} \gg 1$
	and $T'$ (recall that $T^* \leq T \leq T'$) in order to obtain $v_0 \in L^p(\R^n)$ with $\|v_0\|_p = 1$ such that 
	\eqsp{
		\left\| 
		v(T^*) 
		\right\|_p
		\ge 
		\|
		S_A(T^*)
		\delta  
		v_0
		\|_p 
		-
		\frac{1}{4C} 
		\ge 
		\delta 
		e^{aT^*} 
		- 
		\gamma 
		\delta
		- 
		\frac{1}{4C}
		\ge
		\frac{1}{4C}
		-\gamma 
		=
		\frac{1}{4C}
		\Big(
		 1-\frac{1}{k_{0}}
		\Big)
		.
	 }
	The proof of instability is completed because the right hand side is independent of 
	$\delta$.
\end{proof}

\begin{remark}\label{rem:IntTT}
	The last inequality in \eqref{eq:DuhDifEs} follows from a direct calculation which we present for the reader convenience. 
	For a fixed $\eta \in (0, T)$, 
	we obtain
	\eqspn{
		&\quad e^{\frac{3}{2}at}
		\int_0^t 
		(t-\tau)^{-\frac{n}{2}{\frac{1}{q}-\frac{1}{2} }} 
		e^{\frac{1}{2}a\tau} 
		\dta \\
		&= 
		e^{\frac{3}{2}at}
		\left(
		\int_0^{t-\eta} 
		(t-\tau)^{-\frac{n}{2}{\frac{1}{q}-\frac{1}{2} }} 
		e^{\frac{1}{2}a\tau}
		\dta 
		+ 
		\int_{t-\eta}^{t} 
		(t-\tau)^{-\frac{n}{2}{\frac{1}{q}-\frac{1}{2} }} 
		e^{\frac{1}{2}a\tau}  
		\dta
		\right) 
		\\ 
		&\le 
		e^{\frac{3}{2}at} 
		\left( 
		\eta^{-\frac{n}{2}\frac{1}{q}-\frac{1}{2}}
		e^{\frac{1}{2}a(t-1)} 
		+
		e^{\frac{1}{2}at}
		\int_{t-\eta}^{t} 
		(t-\tau)^{-\frac{n}{2}{\frac{1}{q}-\frac{1}{2} }} 
		\dta 
		\right) \\
		&\le 
		C
		e^{2at}.
	}
\end{remark}


\section*{Acknowledgments}
	S.\ Cygan and H.\ Wakui were supported by the Polish NCN grant \hbox{2016/23/B/ST1/00434}. H.\ Wakui was also supported by JSPS Grant-in-Aid for JSPS Fellows Grant number JP20J00940.


\begin{thebibliography}{10}
	\providecommand{\url}[1]{{#1}}
	\providecommand{\urlprefix}{URL }
	\expandafter\ifx\csname urlstyle\endcsname\relax
	\providecommand{\doi}[1]{DOI~\discretionary{}{}{}#1}\else
	\providecommand{\doi}{DOI~\discretionary{}{}{}\begingroup
		\urlstyle{rm}\Url}\fi
	
	\bibitem{Ad-He}
	Adams, D.R., Hedberg, L.I.: Function spaces and potential theory,
	\emph{Grundlehren der Mathematischen Wissenschaften [Fundamental Principles
		of Mathematical Sciences]}, vol. 314.
	\newblock Springer-Verlag, Berlin (1996)
	
	\bibitem{Ar-Ro-Ch-Dl}
	Arrieta, J.M., Rodriguez-Bernal, A., Cholewa, J.W., D{\l}otko, T.: Linear
	parabolic equations in locally uniform spaces.
	\newblock Math.\ Models Methods Appl.\ Sci. \textbf{{\bf 14}}(2), 253--293
	(2004)
	
	\bibitem{BBTW}
	Bellomo, N., Bellouquid, A., Tao, Y., Winkler, M.: Toward a mathematical theory
	of {K}eller-{S}egel models of pattern formation in biological tissues.
	\newblock Math.\ Models Methods Appl.\ Sci. \textbf{{\bf 25}}(9), 1663--1763
	(2015)
	
	\bibitem{BCKZ}
	Biler, P., Cie{\'{s}}lak, T., Karch, G., Zienkiewicz, J.: Local criteria for
	blowup in two-dimensional chemotaxis models.
	\newblock Discrete Contin.\ Dyn.\ Syst. \textbf{{\bf 37}}(4), 1841--1856 (2017)
	
	\bibitem{BGK}
	Biler, P., Guerra, I., Karch, G.: Large global-in-time solutions of the
	parabolic-parabolic {K}eller-{S}egel system on the plane.
	\newblock Commun.\ Pure Appl.\ Anal. \textbf{{\bf 14}}(6), 2117--2126 (2015)
	
	\bibitem{CPZ}
	Corrias, L., Perthame, B., Zaag, H.: Global solutions of some chemotaxis and
	angiogenesis systems in high space dimensions.
	\newblock Milan J.\ Math. \textbf{{\bf 72}}, 1--28 (2004)
	
	\bibitem{EN00}
	Engel, K.J., Nagel, R.: One-parameter semigroups for linear evolution
	equations, \emph{Graduate Texts in Mathematics}, vol. 194.
	\newblock Springer-Verlag, New York (2000).
	\newblock With contributions by S. Brendle, M. Campiti, T. Hahn, G. Metafune,
	G. Nickel, D. Pallara, C. Perazzoli, A. Rhandi, S. Romanelli and R.
	Schnaubelt
	
	\bibitem{Friedlander}
	Friedlander, S., Pavlovi{\'{c}}, N., Shvydkoy, R.: Nonlinear instability for
	the {N}avier-{S}tokes equations.
	\newblock Comm.\ Math.\ Phys. \textbf{{\bf 264}}(1), 335--347 (2006)
	
	\bibitem{fred}
	Friedman, A.: Partial Differential Equations of Parabolic Type.
	\newblock Prentice-Hall, Inc., Englewood Cliffs, N.J. (1964)
	
	\bibitem{GuoHw}
	Guo, Y., Hwang, H.J.: Pattern formation ({I}): the {K}eller-{S}egel model.
	\newblock J. Differential Equations \textbf{249}(7), 1519--1530 (2010)
	
	\bibitem{Hardy2006CombinatoricsOP}
	Hardy, M.: Combinatorics of partial derivatives.
	\newblock Electron.\ J.\ Combin. \textbf{{\bf 13}}(1) (2006)
	
	\bibitem{H}
	Horstmann, D.: From 1970 until present: the {K}eller-{S}egel model in
	chemotaxis and its consequences. {I}.
	\newblock Jahresber.\ Deutsch.\ Math.-Verein. \textbf{{\bf 105}}(3), 103--165
	(2003)
	
	\bibitem{KSS}
	Karch, G., Suzuki, K.: Spikes and diffusion waves in one-dimensional model of
	chemotaxis.
	\newblock Nonlinearity \textbf{{\bf 23}}(12), 3119--3137 (2010)
	
	\bibitem{KS11}
	Karch, G., Suzuki, K.: Blow-up versus global existence of solutions to
	aggregation equations.
	\newblock Appl.\ Math.\ (Warsaw) \textbf{{\bf 38}}(3), 243--258 (2011)
	
	\bibitem{KeSi}
	Keller, E.F., Segel, L.A.: Initiation of slime mold aggregation viewed as an
	instability.
	\newblock J.\ Theoret.\ Biol. \textbf{{\bf 26}}(3), 399--415 (1970)
	
	\bibitem{KSJ}
	Kozono, H., Sugiyama, Y.: Local existence and finite time blow-up of solutions
	in the 2-d {K}eller-{S}egel system.
	\newblock J.\ Evol.\ Equ. \textbf{{\bf 8}}(2), 353--378 (2008)
	
	\bibitem{KSY}
	Kozono, H., Sugiyama, Y., Yahagi, Y.: Existence and uniqueness theorem on weak
	solutions to the parabolic-elliptic {K}eller-{S}egel system.
	\newblock J.\ Differential Equations \textbf{{\bf 253}}(7), 2295--2313 (2012)
	
	\bibitem{Ma-Te}
	Maekawa, Y., Terasawa, Y.: The {N}avier-{S}tokes equations with initial data in
	uniformly local {$L^p$} spaces.
	\newblock Differential Integral Equations \textbf{{\bf 19}}(4), 369--400 (2006)
	
	\bibitem{RCD}
	Raczy{\'{n}}ski, A.: Diffusion-dominated asymptotics of solution to chemotaxis
	model.
	\newblock J.\ Evol.\ Equ. \textbf{{\bf 11}}(3), 509--529 (2011)
	
	\bibitem{ShSt}
	Shatah, J., Strauss, W.: Spectral condition for instability.
	\newblock In: Nonlinear {PDE}'s, dynamics and continuum physics ({S}outh
	{H}adley, {MA}, 1998), \emph{Contemp.\ Math.}, vol. 255, pp. 189--198. Amer.\
	Math.\ Soc., Providence, RI (2000)
	
	\bibitem{Stein}
	Stein, E.M.: Singular integrals and differentiability properties of functions.
	\newblock No.~30 in Princeton Mathematical Series. Princeton University Press,
	Princeton (1970)
	
	\bibitem{Wink}
	Winkler, M.: How unstable is spatial homogeneity in {K}eller-{S}egel systems? a
	new critical mass phenomenon in two- and higher-dimensional
	parabolic-elliptic cases.
	\newblock Math.\ Ann. \textbf{{\bf 373}}(12), 1237--1282 (2019)
	
	\bibitem{Wong}
	Wong, M.W.: Spectra of pseudodifferential operators on
	{$L^{p}(\mathbb{R}^{n})$}.
	\newblock Comm.\ Partial Differential Equations \textbf{{\bf 4}}(12),
	1389--1401 (1979)
	
	\bibitem{yagi}
	Yagi, A.: Abstract parabolic evolution equations and their applications.
	\newblock Springer Monographs in Mathematics. Springer-Verlag, Berlin (2010)
	
\end{thebibliography}

\end{document}